% !TEX TS-program = pdflatexmk
\documentclass[a4paper, 11pt]{amsart}
\usepackage{amsmath,amsthm,amssymb}
\usepackage{graphicx,color}
\usepackage[all]{xy}
\usepackage{hyperref}
\usepackage[margin=3cm]{geometry}
\usepackage{tikz,enumerate}

\theoremstyle{plain}
\newtheorem{theorem}{Theorem}[section]
\newtheorem{proposition}[theorem]{Proposition}
\newtheorem{lemma}[theorem]{Lemma}
\newtheorem{corollary}[theorem]{Corollary}
\numberwithin{equation}{section}
\newtheorem*{maintheorem*}{Main Theorem}

\theoremstyle{definition}
\newtheorem{definition}[theorem]{Definition}
\newtheorem{remark}[theorem]{Remark}
\newtheorem{example}[theorem]{Example}

\newcommand{\C}{\mathbb{C}}
\newcommand{\Q}{\mathbb{Q}}
\newcommand{\R}{\mathbb{R}}
\newcommand{\Z}{\mathbb{Z}}
\newcommand{\e}{\mathbf{e}}
\newcommand{\fF}{\mathfrak{F}}
\newcommand{\RP}{\mathbb{R}\mathbb{P}}

\def\wt{\widetilde}
\def\ul{\underline}
\def\ol{\overline}
\def\RZ{\mathbb{R}\mathcal{Z}}

\DeclareMathOperator{\card}{card}

\DeclareMathOperator{\row}{row}
\DeclareMathOperator{\Hom}{Hom}
\DeclareMathOperator{\Cri}{Cri}
\DeclareMathOperator{\PL}{PL}
\DeclareMathOperator{\Link}{Link}

\begin{document}
   \title[Cohomology of real toric manifolds and small covers]{Integral cohomology groups of real toric manifolds and small covers}
\author[L. Cai]{Li Cai}
\address{Department of Mathematical Sciences, Xi'an Jiaotong-Liverpool University, Suzhou 215123, Jiangsu, China}
\email{Li.Cai@xjtlu.edu.cn}

\author[S. Choi]{Suyoung Choi}
\address{Department of Mathematics, Ajou University, 206 Worldcup-ro, Suwon 16499, South Korea}
\email{schoi@ajou.ac.kr}

\thanks{The first named author was supported by the National Research Foundation of Korea Grant funded by the Korean Government (NRF-2019R1A2C2010989).}

\subjclass[2010]{Primary 57N65; Secondary 55N10, 13H10}% Subject code(s)

\keywords{Real toric manifold, Small cover, Bockstein homomorphisms, Cohomology groups}

\newcommand\numberthis{\addtocounter{equation}{1}\tag{\theequation}}

\maketitle
  \begin{abstract}
    For a simplicial complex $K$ with $m$ vertices, there is a canonical $\Z_2^m$-space known as a real moment angle complex $\RZ_K$.
    In this paper, we consider the quotient spaces $Y=\RZ_K / \Z_2^{k}$, where $K$ is a pure shellable complex and $\Z_2^k \subset \Z_2^m$ is a maximal free action on $\RZ_K$.
    A typical example of such spaces is a small cover, where a small cover is known as a topological analog of a real toric manifold.
    We compute the integral cohomology group of $Y$ by using the PL cell decomposition obtained from a shelling of $K$.
    In addition, we compute the Bockstein spectral sequence of $Y$ explicitly.
  \end{abstract}

\tableofcontents

%  \begin{abstract}
%  From a given shelling of $K$,
%  we get a cell decomposition for a small cover $Y$
%  associated to $K$,
%  which is a generalization of the one given in
%  Davis and Januszkiewicz
%  \cite{Davis-Januszkiewicz1991} by a Morse function.
%  Then we study its relation with the well-known
%  algebraic decomposition of
%  the $\mathrm{mod}$ $2$ Stanley-Reisner ring $\mathbb{Z}_2[K]$
%  from the shelling. In particular we calculate
%  the cohomology groups of $Y$ with integer coefficients,
%  as well as the $\mathrm{mod}$ $2$ Bockstein
%  homomorphisms.
%  \end{abstract}

%%%%%%%%%%%%%%%%%%%%   Start of main body of article
%---------------Section 1--------------------------------
\section{Introduction}
One of the most important classes in toric geometry is a class of projective non-singular complete toric varieties.
A \emph{toric variety} is an algebraic normal variety that admits an action of $(\C^\ast)^n$ with an open dense orbit, where $\C^\ast = \C \setminus \{0\}$.
A non-singular complete toric variety is simply called a \emph{toric manifold}.
The fundamental theorem says that a projective toric manifold $X$ of complex dimension $n$ can be represented by an $n$-dimensional simple convex polytope $P_X$ in the Euclidean space $\R^n$ such that, at each vertex, the outgoing normal vectors of facets containing the vertex become part of a $\Z^n$-basis.

Another important object in toric geometry is the real locus of a toric manifold $X$, denoted by $X^\R$. We simply call $X^\R$ a \emph{real toric manifold}.
It is known that $X^\R$ is a real variety and a smooth manifold.
If $X$ is a projective toric manifold associated to $P_X$   of dimension $n$, then $X^\R$ is completely determined as a variety from partial information of $P_X$.
More precisely, if $P_X$ has $m$ facets $F_1, F_2, \ldots, F_m$, then $X^\R$ is determined by a pair of simplicial complex $K_X$ and a linear map $\lambda_X \colon \Z_2^m \to \Z_2^n$  such that:
\begin{itemize}
  \item $K_X$ is the boundary complex of $P_X$, that is, the vertex set of $K_X$ is $[m]=\{1,2,\ldots, m\}$ and $\sigma=\{i_1, \ldots, i_k\} \in K_X$ if and only if $\bigcap_{i \in \sigma} F_i \neq \emptyset$; and
  \item $\lambda_X(\e_i)$ is congruent to the normal vector of $F_i$ modulo $2$ for $i=1, \ldots, m$, where $\e_i$ is the $i$th canonical vector in $\Z_2^m$; it is called the (mod $2$) \emph{characteristic function} of $X^\R$.
\end{itemize}
For an abstract simplicial complex $K$ on $[m]$, a linear map $\lambda \colon \Z_2^m \to \Z_2^n$ is said to satisfy the \emph{non-singularity condition} over $K$ if for each $\sigma=\{i_1, \ldots, i_k\} \in K$, $\lambda(\e_{i_1}), \ldots, \lambda(\e_{i_k})$ are linearly independent over $\Z_2$.
We remark that $\lambda_X$ satisfies the non-singularity condition over $K_X$.

From the viewpoint of topologists, the topology of $X^\R$ is more complicated than the topology of $X$.
For instance, $X$ is simply connected whereas $X^\R$ is never simply connected, and the integral cohomology ring of $X$ can be described beautifully as a quotient of the polynomial rings whereas the integral cohomology ring of $X^\R$ remains unknown.
Indeed, although the cohomology formula of a projective toric manifold $X$ has been well-established since the late 1970s due to Jurkiewicz \cite{Jurkiewicz1980} (and Danilov \cite{Danilov1978}), only partial results of cohomology of $X^\R$ have been obtained.
In general, the real locus of a complex variety has a rather more complicated topology than the original.
Thus, the computation of the cohomology of the real varieties in specific cases is always a good challenge to topologists (cf. \cite{Etingof-Henroques-Kamnitzer-Rains2010}).
Hence, numerous studies have attempted to compute the integral cohomology group of real toric varieties in the last decades.

Throughout this paper, $H^\ast(X;R)$ and $\tilde{H}^\ast(X;R)$ denote the (ordinary) cohomology and the reduced cohomology of $X$ with the coefficient $R$, respectively.
In 1985, $H^\ast(X^\R ; \Z_2)$ was computed by Jurkiewicz \cite{Jurkiewicz1985}.
In their unpublished paper \cite{ST2012} (or see \cite{Trevisan2012}), Suciu and Trevisan presented a formula that describes the additive structure of $H^\ast(X^\R;\Q)$, and the formula was confirmed and extended in \cite{Choi-Park2017_torsion} to the coefficient $\Z_q$, where $q$ is an odd integer.
The multiplicative structure of $H^\ast(X^\R;R)$ for $R$ is $\Q$ or $\Z_q$ ($q$ is odd) is also computed in \cite{Choi-Park2017_multiplicative}.

The main motivation of this paper is to compute the $2^k$-torsion ($k>1$) of $X^\R$ or, equivalently, to compute the additive structure of $H^\ast(X^\R; \Z_{2^k})$ for $k>1$.
Using the universal coefficient theorem, it enables us to derive the complete formula for the integral cohomology group of $X^\R$.

It is natural to consider a topological generalization of $X^\R$, and compute its cohomology.
One significant generalization of $X^\R$ is a small cover introduced in \cite{Davis-Januszkiewicz1991}. We note that $X^\R$ admits $(\R^\ast)^n$-action induced from the $(\C^\ast)^n$-action on $X$, where $\R^\ast = \R \setminus \{0\}$.
The action of $(\R^\ast)^n$ induces the action of $\Z_2^n \cong (S^0)^n = \{ \pm 1 \}^n \subset (\R^\ast)^n$, and $X^\R / \Z_2^n$ is combinatorially equivalent to $P_X$.
A \emph{small cover} is a smooth manifold that admits a locally standard $\Z_2^n$-action whose orbit space can be identified with an $n$-dimensional simple convex polytope $P$ with $m$ facets.
A small cover can also be classified by a pair of the boundary complex $K$ of $P$ and a characteristic function $\lambda \colon \Z_2^m \to \Z_2^n$ satisfying the non-singularity condition over $K$.
It should be noted that the boundary complex $K$ of a simple convex polytope $P$ is shellable \cite{Bruggesser-Mani1971}.

More generally, let $K$ be an abstract simplicial complex on $[m]$.
The \emph{real moment-angle complex} $\RZ_K$ of $K$ is the polyhedral product $(\underline{D^1},\underline{S^0})^K$ (the details are provided in Section~\ref{sec:2}) and it admits a $\Z_2^m$-action. For any linear map $\lambda \colon \Z_2^m \to \Z_2^n$, the kernel $\lambda$ is in $\Z_2^m$ and it acts on $\RZ_K$ as well.
The \emph{real toric space} $M^\R(K,\lambda)$ associated to a pair of $K$ and $\lambda$ is defined to be the quotient space $\RZ_K / \ker \lambda$.
It is known that a real toric manifold $X^\R$ is a real toric space $M^\R(K_X, \lambda_X)$ associated to $K_X$ and $\lambda_X$, and a small cover is also a real toric space associated to a polytopal simplicial complex $K$ and some characteristic function satisfying the non-singularity condition  over $K$.

Throughout the paper, we identify the power set of $[m]$ with $\Z_2^m$ in the standard way;
each element of $[m]$ corresponds to a non-zero coordinate of $\Z_2^m$.
For an element $\omega \in \Z_2^m$, $K_\omega$ is the full subcomplex of $K$ induced by the subset of $[m]$ associated to $\omega$.
Assume that $\lambda$ is represented by an $n\times m$ $\Z_2$-matrix $\Lambda$.
Then, the subspace spanned by the row vectors of $\Lambda$ in $\Z_2^m$ only depends on $\lambda$.
Hence, we denote by $\row \lambda$ the row space of $\Lambda$.

Among real toric spaces, in this paper, we mainly focus on the real toric spaces $Y$ associated to a pure shellable complex $K$ on $[m]$ whose dimension is $n-1$, and a characteristic function $\lambda \colon \Z_2^m \to \Z_2^n$ satisfying the non-singularity condition over $K$.
Then, $\ker \lambda \subset \Z_2^m$ acts on $\RZ_K$ freely, and there is no subgroup of $\Z_2^m$ that acts on $\RZ_K$ freely and contains $\ker \lambda$ as a proper subgroup.
Hence, $Y$ can be understood as the quotient spaces $\RZ_K / \Z_2^{k}$, where $K$ is a pure shellable complex and $\Z_2^k \subset \Z_2^m$ is a maximal free action on $\RZ_K$.

We firstly show that a shelling of $K$ gives a cell structure on $Y$ (up to homotopy) such that each dual cell (as a cellular chain) is a mod $2$ cocycle and all dual cells give the basis for $H^\ast(Y;\Z_2)$.
In the case when $K$ is polytopal, this cell structure specializes to the one given in \cite{Davis-Januszkiewicz1991} by a Morse function (see Remark~\ref{rem:smooth}).
In addition, with integer coefficients, we construct that the corresponding $\PL$ cellular cochain complex $(C^{\ast}(Y),d)$ of $Y$ is cochain-homotopy equivalent to a chain complex $(\ol{C}^\ast_\lambda,2d')$, where $(\ol{C}^\ast_\lambda,d')$ is a direct sum of the cellular cochain complexes of $K_\omega$ for $\omega \in \row \lambda$ (see Theorem~\ref{thm:main}).
This is done by a detailed comparison of the cellular cochains of $Y$ and that of its covering space $\RZ_K$, using \emph{transfer homomorphisms} at the cochain level (see Section~\ref{sec:6}).
%As is well-known, a (co)chain-homotopy equivalence between two (co)chain complexes implies the existence of a (co)chain map $\phi \colon (\ol{C}^\ast_\lambda,2d') \to (C^{\ast+1}(Y),d)$ between them which induces an isomorphism in (co)homology (see Theorem~\ref{thm:main}).
Thus, we have the following theorem immediately.

%\begin{theorem} \label{thm:integral_cohom_of_Y}
%    Let $K$ be a pure shellable simplicial complex on $[m]$, the dimension of which is $n-1$, and let $\lambda \colon \Z_2^m \to \Z_2^n$ be a characteristic function satisfying the non-singularity condition over $K$.
%Let $Y = M^\R(K,\lambda)$ be the real toric space associated to $K$ and $\lambda$:
%    \begin{enumerate}[(1)]
%    \item $H^\ast(Y; \Z_{2^{k+1}}) \cong \bigoplus_{\omega \in \row \lambda} \tilde{H}^{\ast-1}(K_\omega;\Z_{2^k})$ as graded groups for $k \geq 1$;
%    \item $H^\ast(Y;R) \cong \bigoplus_{\omega \in \row \lambda} \tilde{H}^{\ast-1}(K_\omega;R)$ as graded groups, where $R$ is either $\Q$ or $\Z_q$ for an odd integer $q$.
%    \end{enumerate}
%\end{theorem}

\begin{theorem}\label{thm:integral_cohom_of_Y}
    Let $K$ be a pure shellable simplicial complex on $[m]$, the dimension of which is $n-1$, and let $\lambda \colon \Z_2^m \to \Z_2^n$ be a characteristic function satisfying the non-singularity condition over $K$.
    Let $Y = M^\R(K,\lambda)$ be the real toric space associated to $K$ and $\lambda$, and let $\ol{G}_\lambda^i$ be the group
	  $\bigoplus_{\omega\in \row \lambda}\wt{H}^i(K_{\omega};\Z )$,
	  $i\geq -1$,
	  and let $p$ be an odd prime, where $\wt{H}^{-1}(K_{\omega};\Z )$
		is non-trivial only when $\omega$ is a zero vector, which is
		infinite cyclic.
	  Then: 	
	  \begin{enumerate}
		\item the $\Z $-summands (respectively $\Z_{p^k}$-summands, $k\geq 1$)
		  of $\ol{G}_\lambda^i$ are in one-to-one correspondence
		  with that of $H^{i+1}(Y;\Z )$; and
	\item for $k\geq 1$,
	  the $\Z_{2^k}$-summands of $\ol{G}_\lambda^{i}$ are in one-to-one correspondence
	  with the $\Z_{2^{k+1}}$-summands of $H^{i+1}(Y;\Z )$.
	  \end{enumerate}
\end{theorem}

It should be noted that the result itself of (1) in the above main theorem is already known in \cite{ST2012, Choi-Park2017_torsion}, whereas the result of (2) is new.
However, our piecewise linear (PL) cell structure of $Y$ (in Section~\ref{sec:6}) provides a new (topological) proof of it, and it can provide more information such as Theorem~\ref{cor:bss} below.

The most important corollary of Theorem~\ref{thm:integral_cohom_of_Y} is that it allows us to compute the integral cohomology group $H^\ast(Y;\Z)$ of $Y$ in terms of $K$ and $\lambda$.
The formula of the $\Z_2$-cohomology ring of $Y$ has been established by
Davis and Januszkiewicz \cite[Theorem~5.12]{Davis-Januszkiewicz1991} (or Proposition~\ref{prop:DJ} below).
Remarkably, the additive structure of $H^\ast(Y;\Z_2)$ only depends on $K$. The $i$th $\Z_2$-Betti number of $Y$ is equal to the $i$th component of the $h$-vector of $K$.
Therefore, by combining this with the main theorem, according to the universal coefficient theorem for the cohomology, one can compute the integral cohomology group of $Y$ in terms of $K$ and $\lambda$, as well as the real locus of a projective toric manifold and a small cover.

\begin{corollary} \label{cor:cohom_of_Y}
    The integral cohomology group $H^\ast(Y;\Z)$ of $Y$ is completely determined by the reduced cohomology group of $K_\omega$ (for $\omega \in \row \lambda$) and the $h$-vector of $K$.
%    More precisely,
%    the $\Z$-, $\Z_q$, and $\Z_{2^{k+1}}$-summands of $H^\ast(Y; \Z)$ are isomorphic to the $\Z$-, $\Z_q$-, and $\Z_{2^k}$-summands of $\bigoplus_{\omega \in \row \lambda} \tilde{H}^{\ast-1}(K_\omega;\Z)$, respectively, where $q$ is an odd number and $k \geq 1$.
%    The $\Z_2$-summand of $H^i(Y;\Z)$ is $\Z_2^{ h_i - \beta^{i-1} - \beta^{i}}$, where $h_i$ is the $i$th component of the $h$-vector of $K$, and $\tilde{\beta}^j$ is the sum of the ranks of $\tilde{H}^j(K_\omega;\Z_2)$ over $\Z_2$ for all $\omega \in \row \lambda$.
\end{corollary}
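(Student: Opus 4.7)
The plan is to apply the universal coefficient theorem (UCT) to the formulas already established and then invert the resulting linear relations to extract the $p$-primary decomposition of $H^\ast(Y;\Z)$. Since Theorem~\ref{thm:main} equips $Y$ with a finite $\PL$ cell structure, each $H^i(Y;\Z)$ is a finitely generated abelian group that vanishes for $i>\dim Y$. Writing
\[
H^i(Y;\Z)\;\cong\;\Z^{b_i}\oplus\bigoplus_{p\text{ prime}}\bigoplus_{k\ge 1}(\Z_{p^k})^{e_{i,k}^p},
\]
it will suffice to determine the integers $b_i$ and $e_{i,k}^p$ from the allowed inputs, namely the reduced cohomology of the full subcomplexes $K_\omega$ (for $\omega\in\row\lambda$) and the $h$-vector of $K$.

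For the free rank, Theorem~\ref{thm:integral_cohom_of_Y}(2) with $R=\Q$ gives $b_i$ directly. For the $p$-primary torsion, UCT yields
\[
\dim_{\Z_p} H^i(Y;\Z_{p^\ell})\;=\;b_i\,\ell\;+\;\sum_{k\ge 1}\min(k,\ell)\bigl(e_{i,k}^p+e_{i+1,k}^p\bigr),
\]
so it remains only to compute the left-hand side from the inputs for every $i$, $\ell$, and $p$. For odd $p$, Theorem~\ref{thm:integral_cohom_of_Y}(2) with $R=\Z_{p^\ell}$ does this in terms of $\tilde H^{\ast-1}(K_\omega;\Z_{p^\ell})$; for $p=2$ and $\ell\ge 2$, Theorem~\ref{thm:integral_cohom_of_Y}(1) does the same using $\Z_{2^{\ell-1}}$-coefficients; and the remaining case $p=2$, $\ell=1$ is supplied by Proposition~\ref{prop:DJ}, which identifies $\dim_{\Z_2}H^i(Y;\Z_2)$ with the $i$th entry of the $h$-vector of $K$. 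This last case is precisely why the $h$-vector must appear as a separate input in the statement of the corollary.

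With all of these dimensions known, standard finite differences in $\ell$ recover the adjacent-degree sums $e_{i,k}^p+e_{i+1,k}^p$, and downward induction on $i$, started from the vanishing $e_{i,k}^p=0$ for $i>\dim Y$, separates them into the individual multiplicities $e_{i,k}^p$. The main obstacle I anticipate is exactly this last separation step: the UCT equations alone determine only the adjacent-degree sums, so finiteness of $\dim Y$ is genuinely required to close the induction. Once this is in place, no other step poses any real difficulty, and assembling the pieces produces $H^\ast(Y;\Z)$ entirely from $\{\tilde H^\ast(K_\omega;-)\}_{\omega\in\row\lambda}$ together with the $h$-vector.
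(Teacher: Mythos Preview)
Your argument is correct and is essentially the paper's own: the paper justifies Corollary~\ref{cor:cohom_of_Y} in one sentence by combining Theorem~\ref{thm:integral_cohom_of_Y} with the Davis--Januszkiewicz computation of $H^\ast(Y;\Z_2)$ (the $h$-vector) via the universal coefficient theorem, and you have simply filled in the inversion step (finite differences in $\ell$, then downward induction from $i>\dim Y$) that the paper leaves to the reader. The paper does offer a slightly more direct repackaging later in Corollary~\ref{cor:tor}, obtained from the Smith normal form of the cochain map of Theorem~\ref{thm:main}: there the free rank, the odd-primary torsion, and the $\Z_{2^{k}}$-summands for $k\ge 2$ of $H^{i+1}(Y;\Z)$ are read off directly from $\bigoplus_\omega\widetilde H^{i}(K_\omega;\Z)$, so only the $\Z_2$-summands still require the $h$-vector and the downward induction; your route recovers the same data but treats all primes uniformly through the UCT.

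One minor notational point: for $\ell>1$ the group $H^i(Y;\Z_{p^\ell})$ is not a $\Z_p$-vector space, so your ``$\dim_{\Z_p}$'' should be read as $\log_p$ of the order (equivalently, the composition length over $\Z_p$); with that interpretation your displayed formula is correct.
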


\begin{remark}
 Even if a toric manifold $X$ is non-projective, the corresponding real toric manifold $X^\R$ becomes a real toric space, namely, it can be represented by a pair of $K_X$ and $\lambda_X$.
In this case, however, $K_X$ is not necessarily polytopal (see \cite{Suyama2014}), whereas it is always star-shaped.
A real toric space associated to a pair of a star-shaped simplicial complex $K$ on $[m]$ whose dimension is $n-1$ and a characteristic map $\lambda \colon \Z_2^m \to \Z_2^n$ satisfying the non-singularity condition is known as a \emph{topological real toric manifold} introduced in \cite{Ishida-Fukukawa-Masuda2013}.
Hence, a topological real toric manifold is a topological generalization of real toric manifold.
As mentioned in \cite[p.~279]{Ziegler1995book}, there is an old question that asks whether or not every star-shaped complex is shellable. If the answer to the question is affirmative in general, then our formula in Theorem~\ref{thm:integral_cohom_of_Y} would cover all topological real toric manifolds, as well as all ordinary real toric manifolds.
\end{remark}

Theorem~\ref{thm:integral_cohom_of_Y} provides many interesting corollaries on the torsion elements of real toric manifolds and small covers.
It should be mentioned that, for any integer $\ell \geq 2$, there is a real toric manifold $X^\R$ (which is the real locus of projective toric manifold $X$) and an element $\omega \in \row \lambda_X$ such that $K_\omega$ has an $\ell$-torsion element in the cohomology.
It was proved in \cite[Theorem~5.10]{Choi-Park2017_torsion} only when $\ell$ is an odd number, but the proof, essentially, can be applicable for arbitrary integers.
Therefore, we have the following important corollaries.
\begin{corollary}
    For any integer $\ell>1$, there are infinitely many real toric manifolds and small covers whose integral cohomology groups have an $\ell$-torsion element.
\end{corollary}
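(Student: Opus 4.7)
The plan combines a single-example existence result with a stabilization by products, and I would proceed in three steps.

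First, I would invoke the extension of \cite[Theorem~5.10]{Choi-Park2017_torsion} flagged in the paragraph preceding the statement: for every integer $\ell>1$ there exist a projective toric manifold $X$ and an element $\omega \in \row \lambda_X$ such that $\tilde H^{\ast}(K_\omega;\Z)$ contains an element of order $\ell$. The argument in \emph{loc.\ cit.}\ is written for odd $\ell$, but its combinatorial heart---realizing a prescribed finite simplicial complex (for instance a triangulation of a Moore space $M(\Z_\ell,n)$) as a full subcomplex of the boundary of a simple polytope, compatibly with a suitable characteristic function---does not depend on the parity of $\ell$, so the identical construction should go through for all $\ell>1$.

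Second, I would transfer the torsion to $X^\R$ using Theorem~\ref{thm:integral_cohom_of_Y}. Decompose $\ell=2^a q$ with $q$ odd; the order-$\ell$ class in $\tilde H^{\ast}(K_\omega;\Z)$ then splits into an order-$2^a$ part and an order-$q$ part. Applying part (2) of the theorem with $R=\Z_{q}$ produces, via the universal coefficient theorem, an element of order $q$ in $H^{\ast}(X^\R;\Z)$; applying part (1) with $k=a$ and invoking universal coefficients once more yields an element of order $2^a$ in $H^{\ast}(X^\R;\Z)$. These combine to give an element of order $\ell$, consistent with Corollary~\ref{cor:cohom_of_Y}. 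The same argument applies verbatim with $X^\R$ replaced by a small cover, since both are instances of the spaces $Y$ handled by the main theorem.

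Third, to upgrade one example to infinitely many, I would take Cartesian powers. The product of projective toric manifolds is again projective toric (its polytope is the product polytope), and the product of small covers is again a small cover. By the K\"unneth formula together with the freeness of $H^0$, the $d$-fold power $(X^\R)^d$ continues to carry an element of order $\ell$ in its integer cohomology for every $d\geq 1$, and the members of the sequence $\{(X^\R)^d\}_{d\geq 1}$ have pairwise distinct dimensions, hence are pairwise non-homeomorphic.

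The main obstacle is the first step: one must verify that the polytopal construction of \cite{Choi-Park2017_torsion} extends as advertised to even $\ell$, in particular to $\ell$ a power of $2$, and that the accompanying characteristic function still satisfies the non-singularity condition over the constructed complex. Once that is settled, the remainder is a routine assembly of the main theorem with the universal coefficient theorem and K\"unneth.
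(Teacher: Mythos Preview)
Your proposal is correct and matches the paper's implicit reasoning: the paper offers no proof beyond the sentence ``Therefore, we have the following important corollaries,'' relying on the extended Choi--Park existence statement together with Theorem~\ref{thm:integral_cohom_of_Y}, exactly as in your Steps~1--2. Your Step~3 (Cartesian powers) is a legitimate way to obtain infinitely many examples; the paper does not spell this out.

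One small point of precision in Step~2: invoking part~(1) of Theorem~\ref{thm:integral_cohom_of_Y} for the single value $k=a$ and then ``universal coefficients once more'' does not by itself pin down an element of order $2^a$ in $H^\ast(X^\R;\Z)$, because the UCT short exact sequence mixes contributions from two consecutive degrees. The clean route is to apply part~(1) for \emph{all} $k\ge 1$ simultaneously and read off the summand correspondence (this is exactly Corollary~\ref{cor:tor} in the paper): a $\Z_{2^a}$-summand in $\tilde H^i(K_\omega;\Z)$ yields a $\Z_{2^{a+1}}$-summand in $H^{i+1}(X^\R;\Z)$, which of course contains an element of order $2^a$. With this adjustment your argument goes through.
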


We also remark that Theorem~\ref{thm:integral_cohom_of_Y} provides a simple criterion for $Y$ to have $\ell(>3)$-torsion elements.
  \begin{corollary} \label{cor:at_most_2_torsion}
  The following statements are equivalent:
  \begin{enumerate}
	\item
	  $H^\ast (Y;\Z )$ is either torsion-free or has only two-torsion elements;
	\item $H^\ast (K_{\omega};\Z )$ is  torsion-free, for all $\omega\in \row \lambda$.
  \end{enumerate}
     \end{corollary}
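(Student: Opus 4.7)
The plan is to extract the $2$-primary and odd-primary torsion of $H^\ast(Y;\Z)$ from Theorem~\ref{thm:integral_cohom_of_Y} by combining it with the universal coefficient theorem, applied separately to each $K_\omega$ and to $Y$ itself.

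For $(2)\Rightarrow(1)$, suppose every $\tilde H^\ast(K_\omega;\Z)$ is free. First I would pin down the integral ranks via Theorem~\ref{thm:integral_cohom_of_Y}(2) with $R=\Q$, obtaining $r_n:=\operatorname{rank}H^n(Y;\Z)=\sum_\omega\operatorname{rank}\tilde H^{n-1}(K_\omega;\Z)$. Taking $R=\Z_q$ for an odd prime $q$ and using UCT for each $K_\omega$ (no Tor contribution, since $\tilde H^\ast(K_\omega;\Z)$ is free), the right-hand side of part~(2) reduces to $\bigoplus_\omega \tilde H^{\ast-1}(K_\omega;\Z)\otimes\Z_q$, so $\dim_{\Z_q} H^n(Y;\Z_q)=r_n$. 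Feeding this equality into UCT for $Y$ forces the $q$-primary torsion to vanish in every degree, eliminating all odd-primary torsion. For the $2$-primary part I would apply part~(1): because $\tilde H^\ast(K_\omega;\Z)$ is free, the right-hand side reduces to a direct sum of $\Z_{2^k}$-summands, and comparison with the UCT expression
\begin{equation*}
H^n(Y;\Z_{2^{k+1}}) \cong H^n(Y;\Z)\otimes\Z_{2^{k+1}} \oplus \operatorname{Tor}\bigl(H^{n+1}(Y;\Z),\Z_{2^{k+1}}\bigr)
\end{equation*}
for all $k\ge 1$ matches the exponents of the cyclic $2$-group summands on each side, forcing every $2$-primary cyclic summand of $H^\ast(Y;\Z)$ to have exponent exactly $2$.

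For $(1)\Rightarrow(2)$ I would argue contrapositively. Suppose some $K_{\omega_0}$ carries $\ell$-torsion. If $\ell$ is odd, apply part~(2) with $R=\Z_\ell$; the Tor summand from UCT for $K_{\omega_0}$ produces extra $\Z_\ell$-dimension on the right, and UCT for $Y$ then forces $\ell$-torsion in $H^\ast(Y;\Z)$, contradicting~(1). If $\ell=2^a$ with $a\ge 2$, apply part~(1) at $k=a$: the $\Z_{2^a}$-summand of $\tilde H^{\ast-1}(K_{\omega_0};\Z_{2^a})$ arising from the $2^a$-torsion of $K_{\omega_0}$ appears on the left as a cyclic $2$-summand of $H^\ast(Y;\Z_{2^{a+1}})$ of order exactly $2^a\ge 4$, and matching against UCT for $Y$ forces $H^\ast(Y;\Z)$ to carry a cyclic $2$-summand of order at least $4$. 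Either way (1) fails.

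The delicate part will be the $2$-primary direction. The shift between the coefficient modules ($\Z_{2^{k+1}}$ on the left, $\Z_{2^k}$ on the right of part~(1)) is precisely what encodes the exponent information, so the matching of cyclic $2$-summands must be carried out for all $k$ simultaneously, with careful bookkeeping of the $\Z_{2^{\min(a,k+1)}}$ contributions from each $\Z_{2^a}$-summand of $H^n(Y;\Z)$ or $H^{n+1}(Y;\Z)$ to the left-hand side. The odd-primary direction, by contrast, collapses to a single dimension count per odd prime.
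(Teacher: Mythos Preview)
Your odd-primary arguments via Theorem~\ref{thm:integral_cohom_of_Y}(2) are sound, but the $2$-primary bookkeeping has a real gap. If you actually carry out the UCT matching you describe for $(2)\Rightarrow(1)$, you hit an inconsistency rather than the desired conclusion: having already established $\operatorname{rank}H^n(Y;\Z)=r_n$, the free part contributes $\Z_{2^{k+1}}^{\,r_n}$ to $H^n(Y;\Z_{2^{k+1}})$ via UCT, whereas under your hypothesis the right-hand side of Theorem~\ref{thm:integral_cohom_of_Y}(1) is exactly $\Z_{2^k}^{\,r_n}$. Since $r_0\geq 1$ (from $\omega=\emptyset$), these can never agree; concretely, for $Y=\RP^2$ one has $H^0(\RP^2;\Z_4)=\Z_4$ but $\tilde H^{-1}(K_\emptyset;\Z_2)=\Z_2$. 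So the literal group isomorphism asserted in Theorem~\ref{thm:integral_cohom_of_Y}(1) is not compatible with the UCT expression you wrote down, and your exponent-matching plan cannot be completed as stated. There is also a smaller gap in $(1)\Rightarrow(2)$: your case split only treats $\ell=2^a$ with $a\geq 2$, leaving unaddressed the possibility that some $K_{\omega_0}$ carries a $\Z_2$-summand and nothing larger.

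The route the paper actually relies on is Corollary~\ref{cor:tor}, obtained from Theorem~\ref{thm:main} by putting $\ol d'$ in Smith normal form and observing that passing from $\ol d'$ to $2\ol d'$ doubles every elementary divisor. This yields a direct bijection of cyclic summands: $\Z$- and $\Z_{p^k}$-summands ($p$ odd) of $\bigoplus_\omega\tilde H^i(K_\omega;\Z)$ match those of $H^{i+1}(Y;\Z)$ exactly, while each $\Z_{2^k}$-summand ($k\geq 1$) on the $K_\omega$ side becomes a $\Z_{2^{k+1}}$-summand on the $Y$ side. Both directions of Corollary~\ref{cor:at_most_2_torsion}, including the $a=1$ case, then read off in one line without any UCT comparison.
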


If $K$ is the boundary complex of a simple $n$-polytope, each $K_\omega$ must be homotopy equivalent to a disjoint union of finitely many punctured spheres of dimension $n-1$.
By Alexander duality, one can see that a simplicial complex with torsion in cohomology cannot be a subcomplex in any triangulation of $S^{n-1}$ for $n \leq 4$.
Therefore, $K_\omega$ is torsion-free for arbitrary $\omega \subset [m]$ and, hence, the following corollary follows from Corollary~\ref{cor:at_most_2_torsion}.

\begin{corollary}
A small cover of dimension at most four has at most $2$-torsion in the cohomology group.
\end{corollary}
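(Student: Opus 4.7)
The plan is to reduce the corollary to Corollary~\ref{cor:at_most_2_torsion} combined with the Alexander-duality assertion alluded to in the paragraph preceding the statement. For a small cover, the combinatorial complex $K$ is the boundary complex of a simple $n$-polytope, hence a triangulation of $S^{n-1}$; when the small cover has real dimension at most four, we have $n-1\le 3$, so every full subcomplex $K_{\omega}$ is a subcomplex of a sphere of dimension at most three.

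The core of the argument is to verify that any subcomplex $L$ of $S^{N}$ with $N\le 3$ has torsion-free integer cohomology. I would argue via Alexander duality,
\[
\tilde H^{i}(L;\Z)\cong \tilde H_{N-1-i}(S^{N}\setminus L;\Z)\quad(L\subsetneq S^{N}),
\]
combined with the universal coefficient theorem, which asserts that $\tilde H^{j}(X;\Z)$ is torsion-free whenever $\tilde H_{j-1}(X;\Z)$ is. Since $\tilde H_{0}$ of any space is free, UCT immediately yields that $\tilde H^{0}(L)$ and $\tilde H^{1}(L)$ are torsion-free; Alexander duality then identifies $\tilde H^{N-1}(L)$ with the free group $\tilde H_{0}(S^{N}\setminus L)$; and in top degree $\tilde H^{N}(L)$ either vanishes (when $L$ is proper in $S^{N}$) or equals $\Z$ (when $L=S^{N}$). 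These exhaust the possibly nonzero degrees for every $N\le 3$.

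Applying this with $L=K_{\omega}$ verifies condition (2) of Corollary~\ref{cor:at_most_2_torsion}, and condition (1) then delivers the claim. The mild subtlety lies in the dimension-three case, since a general open $3$-manifold may well carry torsion in $H_{1}$; that potential torsion is ruled out here precisely because our complement is realized inside $S^{3}$, through the Alexander-duality/UCT bookkeeping above. Once $N=4$ is allowed, this bookkeeping breaks down (the group $\tilde H^{2}(L)\cong \tilde H_{1}(S^{4}\setminus L)$ is no longer forced to be free), which is consistent with the fact that $2$-torsion can genuinely appear in higher-dimensional small covers.
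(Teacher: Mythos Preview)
Your argument is correct and follows the same route as the paper: reduce to Corollary~\ref{cor:at_most_2_torsion} by showing, via Alexander duality and the universal coefficient theorem, that every subcomplex of a triangulated $S^{N}$ with $N\le 3$ has torsion-free integral cohomology (you in fact spell out this bookkeeping more carefully than the paper, which simply asserts it). The only slip is in your closing aside: what can appear once $N\ge 4$ is \emph{non}-$2$-torsion (indeed arbitrary $\ell$-torsion, as noted earlier in the paper), whereas $2$-torsion is already present in small covers of low dimension.
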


More precisely, one can compute the complete formula for the cohomology group of three- and four-dimensional small cover $Y$ as in Corollary~\ref{cor:cohom_of_Y}.
Let $Y^3$ be a three-dimensional small cover $Y^3 = M^\R(K,\lambda)$, where $K$ has $m$ vertices.
We call the rank of $\tilde{H}^i(X;\Q)$ the $i$th \emph{reduced Betti number} of $X$.
Denote by $b$ the sum of the zeroth reduced Betti number of $K_\omega$ for $\omega \in \row \lambda$.
Since there are $2^3 - 1 = 7$ non-zero elements in $\row \lambda$, $b + 7$ is the sum of the number of connected components of $K_\omega$ for non-zero $\omega$s.
Hence, $b$ can be computed combinatorially.

We have the following table (compare with \cite[Proposition~VII.1.3]{Trevisan2012}):

\begin{center}
\begin{tabular}{|c|c|c|}
  \hline
  % after \\: \hline or \cline{col1-col2} \cline{col3-col4} ...
  $H^i(Y^3;\Z)$ & Orientable & Non-orientable \\ \hline
  $i=0$ & $\Z$ & $\Z$ \\
  $i=1$ & $\Z^b$ & $\Z^b$ \\
  $i=2$ & $\Z^b \oplus \Z_2^{m-3-b}$ & $\Z^{b-1} \oplus \Z_2^{m-3-b}$ \\
  $i=3$ & $\Z$ & $\Z_2$ \\
  \hline
\end{tabular}
\end{center}

Let $Y^4$ be a four-dimensional small cover $Y^4 = M^\R(K,\lambda)$, where $K$ has $m$ vertices.
Denote by $b$, $c$, and $d$ the sum of the zeroth, first, and second reduced Betti number of $K_\omega$ for $\omega \in \row \lambda$, respectively.
Then, we have the following table:

\begin{center}
\begin{tabular}{|c|c|c|}
  \hline
  % after \\: \hline or \cline{col1-col2} \cline{col3-col4} ...
  $H^i(Y^4;\Z)$ & Orientable & Non-orientable \\ \hline
  $i=0$ & $\Z$ & $\Z$ \\
  $i=1$ & $\Z^b$ & $\Z^b$ \\
  $i=2$ & $\Z^c \oplus \Z_2^{m-4-b}$ & $\Z^{c} \oplus \Z_2^{m-4-b}$ \\
  $i=3$ & $\Z^b \oplus \Z_2^{m-4-b}$ & $\Z^{d} \oplus \Z_2^{m-5-d}$ \\
  $i=4$ & $\Z$ & $\Z_2$ \\
  \hline
\end{tabular}
\end{center}

In hyperbolic geometry, there is one important class of hyperbolic $3$-spaces, called \emph{hyperbolic $3$-manifolds of L\"{o}bell type}, introduced by Vesnin \cite{Vesnin1987}.
Interestingly, a hyperbolic $3$-manifold of L\"{o}bell type is a small cover.
In general, a three-dimensional small cover $Y = M^\R(K,\lambda)$ is hyperbolic if and only if $K$ is flag and it has no chordless $4$-cycle.
(See the summary in \cite[Section~2.5]{BEMPP}.)
It is worth emphasizing that our formula provides a complete integral cohomology formula for hyperbolic three-dimensional small covers.

On the other hand, we note that there is a classical method to recover the $2^k$-torsion elements from $H^\ast(X;\Z_2)$, called the \emph{Bockstein spectral sequence}.
For another important result, we calculate explicitly the (higher) mod $2$ Bockstein homomorphisms on $H^\ast(Y;\Z_2)$. See Section~\ref{sec:bss}.
		
\begin{theorem}\label{cor:bss}
Let $K$ be a pure shellable simplicial complex on $[m]$ whose dimension is $n-1$ and let $\lambda \colon \Z_2^m \to \Z_2^n$ be a characteristic function satisfying the non-singularity condition over $K$.
Let $Y = M^\R(K,\lambda)$ be the real toric space associated to $K$ and $\lambda$.
		  In the corresponding
		  Bockstein spectral sequences, $E^i_{k+1}(Y)$ is isomorphic
		  to $\oplus_{\omega\in \row \lambda}E^{i-1}_k(K_{\omega})$,
		  for all $i,k\geq 1$. In particular,
		  \[      E^i_{2}(Y)\cong \bigoplus_{\omega\in \row \lambda}
			\wt{H}^{i-1}(K_{\omega};\Z_2), \quad i\geq 1.       \]
		\end{theorem}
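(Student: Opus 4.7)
The plan is to transfer the computation from $Y$ to the algebraic model supplied by Theorem~\ref{thm:main}, which gives a cochain-homotopy equivalence $\phi \colon (\ol{C}^*_\lambda, 2d') \to (C^{*+1}(Y), d)$ between $\Z$-free cochain complexes. The Bockstein spectral sequence is obtained functorially from the short exact sequence of cochain complexes $0 \to C \xrightarrow{\cdot 2} C \to C \otimes \Z_2 \to 0$, and a cochain-homotopy equivalence between $\Z$-free cochain complexes therefore induces an isomorphism of the full Bockstein spectral sequence from the $E_1$-page onward. Accounting for the degree shift built into $\phi$, this reduces the problem to the Bockstein spectral sequence of $(\ol{C}^*_\lambda, 2d')$. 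Since $\ol{C}^*_\lambda = \bigoplus_{\omega \in \row \lambda} C^*(K_\omega)$ splits as a direct sum of cochain complexes stable under $2d'$, the Bockstein spectral sequence likewise splits as an $\omega$-indexed direct sum.

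What remains is the following algebraic lemma, which I would state and verify by a direct unwinding of the definition of the higher Bocksteins: for any $\Z$-free cochain complex $(C, \delta)$, multiplying the differential by $2$ shifts its Bockstein spectral sequence up by one page, i.e. $E^{(2\delta)}_{k+1} \cong E^{(\delta)}_k$ with matching differentials, for every $k \geq 1$. The verification is a one-line comparison of defining conditions: a class in $E^{(2\delta)}_{k+1}$ is represented by some $x \in C$ with $(2\delta)x \in 2^{k+1} C$, equivalently $\delta x \in 2^k C$, which is exactly the condition for $x$ to represent a class in $E^{(\delta)}_k$; moreover
\[ d_{k+1}^{(2\delta)}[x] = \overline{(2\delta)x / 2^{k+1}} = \overline{\delta x / 2^{k}} = d_k^{(\delta)}[x]. \]
Stringing the three identifications together yields $E^i_{k+1}(Y) \cong \bigoplus_{\omega \in \row \lambda} E^{i-1}_k(K_\omega)$ for all $i, k \geq 1$, and specializing to $k = 1$ with $E^{i-1}_1(K_\omega) = \tilde H^{i-1}(K_\omega; \Z_2)$ recovers the formula for $E^i_2(Y)$.

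The main obstacle is not the final algebra but the naturality step in the first paragraph: one must verify that the degree-shifting cochain map $\phi$ of Theorem~\ref{thm:main}, whose source carries the non-standard differential $2d'$, genuinely induces a morphism of the entire Bockstein tower compatible with the short exact sequences $0 \to \Z_2 \to \Z_{2^{k+1}} \to \Z_{2^k} \to 0$ for all $k$. This follows because $\phi$ and a chosen cochain homotopy inverse reduce to cochain-homotopy equivalences at every coefficient ring $\Z_{2^k}$, and the connecting homomorphisms of the associated exact couples are natural with respect to such maps; although the argument is formal, it is the step that requires the most care, and I would devote some space to making it explicit before deploying the algebraic lemma.
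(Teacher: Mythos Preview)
Your argument is correct and takes a more functorial route than the paper's. The paper argues in two stages: first it shows directly (equation~\eqref{eq:cal2}) that, under the basis identification of Theorem~\ref{thm:main}(a), the first Bockstein differential $d_1=Sq^1$ on $H^*(Y;\Z_2)$ is carried by $\phi$ to the mod~$2$ reduction of $\ol{d}'$ on $\ol{C}^*_\lambda$, which yields the $E_2$-statement; for the higher pages it then invokes Corollary~\ref{cor:tor} (the $\Z_{2^k}$-summands of $\bigoplus_\omega\wt H^*(K_\omega;\Z)$ correspond to the $\Z_{2^{k+1}}$-summands of $H^{*+1}(Y;\Z)$, proved via Smith normal form) together with Proposition~\ref{prop:bss}(2), which converts torsion orders into the page on which a $\Z_2$-pair is killed. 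Your approach bypasses the normal-form bookkeeping of Corollary~\ref{cor:tor}: once $\phi$ is known to be a quasi-isomorphism of bounded free $\Z$-complexes, it induces an isomorphism of the entire Bockstein exact couple (so the naturality step you flag as delicate is in fact immediate from Theorem~\ref{thm:main}(b), without needing an explicit homotopy inverse), and your ``doubling the differential shifts the page by one'' lemma handles all $k$ uniformly. The paper's route has the compensating advantage of making $d_1$ explicit on the Davis--Januszkiewicz generators $x_{r(\sigma_j)}$, which is what drives the hands-on computation in Example~\ref{exm:Kb}; your route is cleaner but would not by itself produce that formula.
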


\section{Real toric spaces and shellability}\label{sec:2}
An (abstract) \emph{simplicial complex} on a finite set $V$ is a collection of subsets of $V$ satisfying:
\begin{enumerate}
  \item if $v \in V$, then $\{v \} \in K$; and
  \item if $\sigma \in K$ and $\tau \subset \sigma$, then $\tau \in K$.
\end{enumerate}
Each element $\sigma \in K$ is called a \emph{simplex} or a \emph{face} of $K$, and each element of $V$ is called a \emph{vertex} of $K$.
The \emph{dimension} of $\sigma$ is defined by $\dim \sigma = \card{\sigma} -1$, where $\card \sigma$ denotes the cardinality of $\sigma$.
The \emph{dimension} of $K$ is defined by $\dim K = \max \{ \dim \sigma \mid \sigma \in K\}$.

If an abstract simplicial complex $K$ is isomorphic with the vertex scheme of the simplicial complex $K'$, then $K'$ is said to be a \emph{geometric realization} of $K$, and is uniquely determined up to a linear isomorphism.
Throughout this paper, we do not distinguish a simplicial complex and its geometric realization.
A simplicial complex $K$ is said to be a \emph{simplicial sphere} of dimension $n-1$ if it can be realized by a triangulation of the $(n-1)$-dimensional sphere.
A simplicial sphere $K$ of dimension $n-1$ is said to be \emph{polytopal} if it can be realized by the boundary complex of a convex simple polytope of dimension $n$.

Let $K$ be a simplicial complex on $[m]=\{1,\ldots, m\}$ and let $(X,A)$ be a pair of topological spaces.
The \emph{polyhedral product} $(\underline{X}, \underline{A})^K$ is defined as follows:
$$
    (\underline{X}, \underline{A})^K := \bigcup_{\sigma \in K} \{(x_1, \ldots, x_m ) \in X^m \mid x_i \in A \text{ if } i \notin \sigma \}.
$$
In particular, for an interval $D^1 = [0,1]$ and its boundary $S^0 = \{0,1\}$, the polyhedral product $(\underline{D^1}, \underline{S^0})^K$ is known as the \emph{real moment-angle complex} of $K$, and it is simply denoted by $\RZ_K$.
It should be mentioned that there is a canonical $\Z_2^m$-action on $\RZ_K$ that comes from the $\Z_2$-action on the pair $(D^1, S^0)$.
It is well-known that $\RZ_K$ is a topological manifold if $K$ is a simplicial sphere, and $\RZ_K$ is a smooth manifold if $K$ is polytopal (cf. \cite[Section~2]{Cai2017}).

For $\ell<m$, let $\lambda \colon \Z_2^m \to \Z_2^\ell$ be a linear map. Then, the kernel $\ker \lambda \subset \Z_2^m$ also acts on $\RZ_K$.
The \emph{real toric space} $M^\R(K,\lambda)$ associated to $K$ and $\lambda$ is defined to be
$$
    M^\R(K,\lambda) := \RZ_K / \ker \lambda.
$$
It should be noted that $\ker \lambda$ acts on $\RZ_K$ freely if $\lambda$ satisfies the \emph{non-singularity condition} over $K$; if for each $\sigma=\{i_1, \ldots, i_k\} \in K$, $\lambda(\e_{i_1}), \ldots, \lambda(\e_{i_k})$ are linearly independent over $\Z_2$
(cf. \cite[Lemma~3.1]{Choi-Kaji-Theriault2017}).
Therefore, under the assumption that $\lambda$ satisfies the non-singularity condition over $K$,
$M^\R(K,\lambda)$ is a topological manifold (respectively, smooth manifold) if $K$ is a simplicial sphere (respectively, $K$ is polytopal) as well.

Hence, we are usually interested in the case when $K$ is a simplicial (or polytopal) sphere of dimension $n-1$ and $\lambda \colon \Z_2^m \to \Z_2^\ell$ satisfies the non-singularity condition over $K$.
As the non-singularity implies $\ell \geq n$, we are particularly interested in the case when $\ell =n$.
In this case, $\lambda$ is called a \emph{characteristic function} of $M^\R(K,\lambda)$.
Real toric manifolds and small covers introduced in the introduction provide such examples.
Indeed, if $K$ and $\lambda$ came from the normal fan of a simple convex polytope, then the corresponding real toric space $Y = M^\R(K,\lambda)$ is a real toric manifold, and if $K$ is polytopal and $\ell=n$, then $Y$ is a small cover.

The non-singularity condition has an algebraic interpretation.
Let $K$ be an abstract simplicial complex on $[m]=\{1,\ldots, m\}$.
In the ring $\Z_2[x_1,\ldots,x_m]$ of polynomials with $\deg x_i=1$ for $i=1, \ldots, m$, we consider the ideal $I_K$ generated by square-free monomials $x_{i_1} \cdots x_{i_k}$ for all $\{i_1, \ldots, i_k\} \not\in K$. The quotient $\Z_2[x_1, \ldots, x_m]/ I_K$ is called the \emph{Stanley--Reisner ring} or the \emph{face ring} of $K$ over $\Z_2$, and it is denoted by $\Z_2[K]$.

Let $K$ be a simplicial complex whose Krull dimension is $n$, that is, there exist homogeneous polynomials $\theta_1,\ldots,\theta_n$
that are algebraically independent,
such that $\Z_2[K]$ is a finitely
generated
$\Z_2[\theta_1,\ldots,\theta_n]$-module.
%See \cite[pp.~53--54]{Stanley1996book}) for details.
Such a sequence $\theta_1,\ldots,\theta_n$ is called a \emph{homogeneous system of parameters} (h.s.o.p.).
If, in addition, $\Z_2[K]$ is a
free $\Z_2[\theta_1,\ldots,\theta_n]$-module,
then $\Z_2[K]$ is \emph{Cohen--Macaulay}, and
we say that $K$ is a Cohen--Macaulay complex.
By a result of Reisner \cite{Reisner1976},
$\Z_2[K]$ is Cohen--Macaulay if and only if
$\widetilde{H}_i(K;\Z_2)=0$ for all $i<n-1$, and
$\widetilde{H}_i(\Link(\sigma,K);\Z_2)=0$
for all $i<\dim \Link(\sigma,K)$.

If all $\theta_i$ are linear, then it is called a \emph{linear system of parameters} (l.s.o.p.) of $\Z_2[K]$.
The non-singularity of $\lambda$ implies that $\Z_2[K]$ has an l.s.o.p., and vice versa.
Assume that $\lambda$ can be represented by an $n \times m$ $\Z_2$-matrix $\Lambda$ whose $i$th row vector is $\lambda_i \in \Z_2^m$.
We recall that in this paper, we regard an element of $\Z_2^m$ as a subset of $[m]$ by standard identification.
We write
$$
  l_{\lambda_i}=
  \sum_{k\in\lambda_i}x_k, \quad \text{for } i=1,\ldots,n.
$$

The following criterion is useful.
\begin{proposition}[{Cf.~\cite[pp.~81--82]{Stanley1996book}}]\label{prop:Stan1}
The following
statements are equivalent:
\begin{enumerate}
\item $\lambda\colon\Z_2^m\to\Z_2^n$ is
	a characteristic function that satisfies the non-singularity condition over $K$;
  \item $l_{\lambda_1},\ldots,l_{\lambda_n}$
  is an \emph{l.s.o.p.} of $\Z_2[K]$
  associated to $\lambda$.
 \end{enumerate}
\end{proposition}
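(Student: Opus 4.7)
The plan is to deduce the equivalence from the classical commutative-algebra characterization of h.s.o.p.'s in terms of minimal primes and to translate the resulting condition into linear independence of columns of the matrix $\Lambda$ representing $\lambda$. Since $\dim K=n-1$, the Krull dimension of $\Z_2[K]$ is $n$, so any l.s.o.p.\ must consist of exactly $n$ linear forms; the minimal primes of $\Z_2[K]$ are the ideals $P_\sigma=(x_i : i\notin\sigma)$ with $\sigma$ ranging over the facets of $K$, and $\Z_2[K]/P_\sigma\cong\Z_2[x_j : j\in\sigma]$, a polynomial ring in $\card\sigma$ variables. The standard criterion \cite[pp.~81--82]{Stanley1996book} then states that homogeneous elements $\theta_1,\ldots,\theta_n$ form an h.s.o.p.\ of $\Z_2[K]$ if and only if, for every facet $\sigma$, their images in $\Z_2[x_j : j\in\sigma]$ generate an ideal primary to the irrelevant maximal ideal.

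Applied to $\theta_i=l_{\lambda_i}=\sum_k \Lambda_{i,k}\,x_k$, the image in $\Z_2[x_j : j\in\sigma]$ is simply $\sum_{j\in\sigma}\Lambda_{i,j}\,x_j$, the restriction of $l_{\lambda_i}$ to the $\sigma$-columns of $\Lambda$. Because these images are linear forms, being primary to the irrelevant maximal ideal is equivalent to their spanning the whole $\card\sigma$-dimensional space of linear forms in $\Z_2[x_j : j\in\sigma]$. By a routine row/column rank comparison on the $n\times\card\sigma$ submatrix of $\Lambda$ indexed by $\sigma$-columns, this is in turn equivalent to the columns $\lambda(\e_j)\in\Z_2^n$, $j\in\sigma$, being linearly independent, which is precisely the non-singularity condition restricted to the facet $\sigma$.

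Finally, facet-wise non-singularity coincides with face-wise non-singularity over $K$: every face of $K$ is contained in some facet, and subsets of linearly independent vectors remain linearly independent. Chaining these equivalences yields $(1)\Leftrightarrow(2)$.

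The main obstacle is correctly invoking the minimal-prime criterion for h.s.o.p.'s in the Stanley--Reisner setting, which is the content of the hint to \cite[pp.~81--82]{Stanley1996book} cited with the statement; once it is in hand, everything else is bookkeeping, translating between ``the restrictions of the $l_{\lambda_i}$ span the linear forms in $\Z_2[x_j : j\in\sigma]$'' (a row-rank statement) and ``the columns $\lambda(\e_j)$, $j\in\sigma$, are linearly independent in $\Z_2^n$'' (a column-rank statement for the same submatrix).
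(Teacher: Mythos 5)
The paper does not supply its own proof of this proposition; it is stated as a citation to Stanley's book, pp.~81--82, which is precisely the criterion you invoke (an l.s.o.p.\ for $k[K]$ is characterized by the restrictions to each facet spanning the linear forms in those variables). Your argument correctly unwinds that criterion: the minimal primes of $\Z_2[K]$ are $P_\sigma$ for facets $\sigma$, the reductions of the $l_{\lambda_i}$ modulo $P_\sigma$ are $\sum_{j\in\sigma}\Lambda_{i,j}x_j$, being primary to the irrelevant ideal for linear forms means spanning all of degree one, and the row-rank/column-rank comparison of the $n\times\card\sigma$ submatrix turns this into linear independence of $\{\lambda(\e_j)\}_{j\in\sigma}$; extending from facets to all faces is immediate. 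This is a correct and complete proof, faithful to the approach suggested by the paper's citation.
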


Interestingly, the Stanley--Reisner ring $\Z_2[K]$ and its quotient by l.s.o.p. are realized by the $\Z_2$-cohomology ring of some specific topological spaces corresponding to the real toric space.
We note that $Y=M^\R(K,\lambda)$ has the $\Z_2^n$-action inherited from the $\Z_2^m$-action of $\RZ_K$.
Now, let us consider the Borel construction $BK=E\Z_2^n \times_{\Z_2^n}Y$ of $Y=M^\R(K,\lambda)$ with respect to the $\Z_2^n$-action.
Then, we have a fiber bundle
\begin{equation} \label{eq:fib}
\xymatrix{
    Y \ar[r] & BK  \ar[r] &  B\Z_2^n.
}
\end{equation}
	Let $(\RP^{\infty},\ast)$ be the infinite real
	  projective space with a base point.
	  \begin{proposition}[{Cf.~\cite{Davis-Januszkiewicz1991}}]\label{prop:BP}
  	Up to homotopy, $BK = (\underline{\RP^{\infty}}, \underline{\ast})^K$.
    Moreover, the $\mathrm{mod}$ $2$ cohomology of $BK$ is isomorphic
 to the Stanley--Reisner ring of $K$, namely
 \[ H^\ast(BK;\Z_2)\cong \Z_2[x_1,\ldots,x_m]/I_K
 \quad (\deg x_i=1 \text{ for }i=1,\ldots,m), \]
 where the Stanley--Reisner ideal $I_K$ is generated by square-free monomials of the
 form $x_{\tau}=\prod_{i\in\tau}x_i$, $\tau\not\in K$.
    \end{proposition}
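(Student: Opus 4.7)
The plan is to identify $BK$, up to homotopy, with the polyhedral product $(\underline{\RP^\infty}, \underline{\ast})^K$, and then to compute the mod $2$ cohomology of this polyhedral product directly. The first part is essentially formal functoriality; the computation of $H^\ast$ is where the real content lies.

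For the homotopy equivalence $BK \simeq (\underline{\RP^\infty}, \underline{\ast})^K$, I would work with two Borel constructions associated to the short exact sequence $1\to\ker\lambda\to\Z_2^m\xrightarrow{\lambda}\Z_2^n\to 1$. Since $\ker\lambda$ acts freely on $\RZ_K$ with quotient $Y$, and the residual $\Z_2^n$-action on $Y$ matches the one used to build $BK$, a standard argument (applied to the natural projection $E\Z_2^m\times \RZ_K \to E\Z_2^n\times Y$) yields
$$
BK \;=\; E\Z_2^n\times_{\Z_2^n} Y \;\simeq\; E\Z_2^m\times_{\Z_2^m}\RZ_K.
$$
Choosing the product model $E\Z_2^m=(S^\infty)^m$, the Borel construction commutes with polyhedral product formation coordinate-wise, so
$$
E\Z_2^m\times_{\Z_2^m}(\underline{D^1},\underline{S^0})^K \;\simeq\; \bigl(\underline{S^\infty\times_{\Z_2} D^1},\,\underline{S^\infty\times_{\Z_2} S^0}\bigr)^K.
$$
Contractibility of $D^1$ gives $S^\infty\times_{\Z_2}D^1\simeq \RP^\infty$, while the free $\Z_2$-action on $S^0$ gives $S^\infty\times_{\Z_2}S^0\simeq S^\infty\simeq \ast$, establishing the claim.

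For the cohomology computation, the inclusion $(\underline{\RP^\infty},\underline{\ast})^K\hookrightarrow (\RP^\infty)^m$ combined with the K\"unneth formula and $H^\ast(\RP^\infty;\Z_2)=\Z_2[x]$ induces a ring homomorphism
$$
\varphi\colon \Z_2[x_1,\ldots,x_m]\longrightarrow H^\ast(BK;\Z_2).
$$
I would first show $I_K\subset \ker\varphi$: for $\tau\notin K$ and any $\sigma\in K$ there is some $i\in\tau\setminus\sigma$, so the restriction of $x_\tau=\prod_{i\in\tau}x_i$ to the face piece $(\underline{\RP^\infty},\underline{\ast})^\sigma=\prod_{j\in\sigma}\RP^\infty$ has its $i$-th factor at the basepoint and therefore vanishes; since $(\underline{\RP^\infty},\underline{\ast})^K$ is the colimit of these face pieces over the face category of $K$, a Mayer--Vietoris (or $\check{\mathrm{C}}$ech) argument gives $\varphi(x_\tau)=0$. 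For surjectivity and the reverse inclusion $\ker\varphi\subset I_K$, I would use a filtration of $BK$ by skeleta built up one face at a time: writing $K=K'\cup\{\sigma\}$ with $\sigma$ a maximal face of $K$, the pair $(BK,BK')$ is homotopy equivalent to $((\RP^\infty)^\sigma,(\partial\Delta^\sigma\text{-polyhedral product}))$, whose relative cohomology contributes precisely the monomials that become newly available at $\sigma$. Induction on the number of faces then yields a basis consisting of the monomials $x_1^{a_1}\cdots x_m^{a_m}$ whose support lies in $K$, matching the Stanley--Reisner ring exactly.

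The main obstacle is controlling the size of $H^\ast(BK;\Z_2)$ in each degree well enough to conclude $\ker\varphi=I_K$ rather than a larger ideal. The colimit/Mayer--Vietoris induction above is one route; alternatively, one can invoke the Eilenberg--Moore spectral sequence of the fibration \eqref{eq:fib} with $E_2$-page $\mathrm{Tor}^{\ast}_{H^\ast(B\Z_2^n;\Z_2)}(H^\ast(Y;\Z_2),\Z_2)$ collapsing. Either way the cohomology calculation of Davis--Januszkiewicz is recovered, and the proposition follows.
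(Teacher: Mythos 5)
The paper states this proposition as a known result, citing Davis and Januszkiewicz \cite{Davis-Januszkiewicz1991} without proof, so there is no in-paper argument to compare against; your outline recovers the standard proof and is essentially correct. The homotopy identification of $BK$ with $(\underline{\RP^{\infty}}, \underline{\ast})^K$ --- first replacing $E\Z_2^n\times_{\Z_2^n}Y$ by $E\Z_2^m\times_{\Z_2^m}\RZ_K$ via the free $\ker\lambda$-action, then commuting the Borel construction past the polyhedral product coordinate-wise using the product model $E\Z_2^m=(S^\infty)^m$ together with $S^\infty\times_{\Z_2}D^1\simeq\RP^\infty$ and $S^\infty\times_{\Z_2}S^0\simeq\ast$ --- is exactly how the literature argues, and is clean. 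For the cohomology, the face-by-face filtration you describe is the right elementary route: when a face $\sigma$ is added to $K'$ (with all proper faces of $\sigma$ already in $K'$), excision and the K\"unneth formula identify the relative cohomology with the free $\Z_2$-module on monomials whose support is exactly $\sigma$; since the coefficients are a field the long exact sequences split, and naturality of the restriction $\varphi$ from $(\RP^\infty)^m$ supplies the ring structure, yielding $H^\ast(BK;\Z_2)\cong\Z_2[K]$.

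The one caveat is that the Mayer--Vietoris argument you offer for $I_K\subseteq\ker\varphi$ tacitly requires injectivity of $H^\ast(BK;\Z_2)\to\prod_{\sigma\in K}H^\ast((\RP^\infty)^\sigma;\Z_2)$, which is equivalent to the vanishing of higher derived limits of the diagram $\sigma\mapsto\Z_2[x_i : i\in\sigma]$ over the face poset of $K$ --- a true but not-automatic fact. The filtration argument already gives both inclusions and the surjectivity, so you can simply drop the Mayer--Vietoris sketch and rely only on the induction, avoiding any appeal to higher limits or to the Eilenberg--Moore spectral sequence.
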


The quotient ring
$\Z_2[K]/(l_{\lambda_1},\ldots,l_{\lambda_n})$ has
the following topological interpretation.
\begin{proposition}[{Cf.~\cite[Theorem 5.12]{Davis-Januszkiewicz1991}}]\label{prop:DJ}
When $K$ is a Cohen--Macaulay complex admitting a characteristic
function $\lambda$, the inclusion $ Y\to BK$ in the
fibration \eqref{eq:fib} induces
a surjection in $\mathrm{mod}$ $2$ cohomology,
and we have an isomorphism
\begin{equation}
  H^*(Y;\Z_2)\cong
  \Z_2[K]/(l_{\lambda_1},\ldots,l_{\lambda_n})
  \label{eq:DJ}
\end{equation}
of graded rings.
   \end{proposition}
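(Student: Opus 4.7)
The plan is to apply the Eilenberg--Moore spectral sequence (EMSS) to the pullback square associated with the Borel fibration \eqref{eq:fib}: up to homotopy, $Y$ is the pullback of $\ast\to B\Z_2^n\leftarrow BK$.

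By Proposition~\ref{prop:BP}, $H^\ast(BK;\Z_2)=\Z_2[K]$ and $H^\ast(B\Z_2^n;\Z_2)=\Z_2[t_1,\ldots,t_n]$ with $\deg t_i=1$. First I would identify the algebra homomorphism $H^\ast(B\Z_2^n;\Z_2)\to H^\ast(BK;\Z_2)$ induced by the classifying map $BK\to B\Z_2^n$. Reading this through the polyhedral product model $BK\simeq(\underline{\RP^\infty},\underline{\ast})^K$, the classifying map is induced coordinate-wise by $\lambda\colon\Z_2^m\to\Z_2^n$, so on cohomology $t_i\mapsto l_{\lambda_i}$. This turns $\Z_2[K]$ into a $\Z_2[t_1,\ldots,t_n]$-algebra via the $l_{\lambda_i}$.

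The EMSS has $E_2=\mathrm{Tor}_{\Z_2[t_1,\ldots,t_n]}(\Z_2[K],\Z_2)$ abutting to $H^\ast(Y;\Z_2)$. The Cohen--Macaulay hypothesis is precisely what one needs here: by Proposition~\ref{prop:Stan1}, $l_{\lambda_1},\ldots,l_{\lambda_n}$ is an l.s.o.p.\ of $\Z_2[K]$, and Cohen--Macaulayness says $\Z_2[K]$ is a free (hence flat) module over $\Z_2[l_{\lambda_1},\ldots,l_{\lambda_n}]=\Z_2[t_1,\ldots,t_n]$. Therefore $\mathrm{Tor}^{p}$ vanishes for $p\ge 1$ and $\mathrm{Tor}^{0}=\Z_2[K]\otimes_{\Z_2[t_1,\ldots,t_n]}\Z_2=\Z_2[K]/(l_{\lambda_1},\ldots,l_{\lambda_n})$. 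The spectral sequence collapses onto a single column, and by multiplicativity of the EMSS the resulting abutment isomorphism $H^\ast(Y;\Z_2)\cong\Z_2[K]/(l_{\lambda_1},\ldots,l_{\lambda_n})$ is one of graded rings. The edge homomorphism of the collapsed EMSS coincides with the restriction map $H^\ast(BK;\Z_2)\to H^\ast(Y;\Z_2)$ induced by $Y\hookrightarrow BK$; under the identifications above this edge map is simply the quotient $\Z_2[K]\twoheadrightarrow\Z_2[K]/(l_{\lambda_i})$, so the claimed surjectivity is automatic.

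The main obstacle is the convergence of the EMSS over the non-simply-connected base $B\Z_2^n$. By Dwyer's criterion, strong convergence holds provided the action of $\pi_1(B\Z_2^n)=\Z_2^n$ on $H^\ast(Y;\Z_2)$ is nilpotent. Fortunately, in characteristic two any involution $g\in\Z_2^n$ acts on $H^\ast(Y;\Z_2)$ with $(g_\ast-\mathrm{id})^2=g_\ast^2-\mathrm{id}=0$, so $g_\ast-\mathrm{id}$ is nilpotent and the whole $\Z_2^n$-action is unipotent, hence nilpotent. This mild technical verification is the only subtlety beyond the algebraic Tor computation driven by Cohen--Macaulayness.
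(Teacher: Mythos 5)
The paper does not prove Proposition~\ref{prop:DJ}; it is cited from Davis--Januszkiewicz, whose argument runs the Serre spectral sequence of the fibration $Y\to BK\to B\Z_2^n$ and uses Cohen--Macaulayness (freeness of $\Z_2[K]$ over the l.s.o.p.) to force collapse and then deduce $H^\ast(Y;\Z_2)\cong\Z_2[K]\otimes_{\Z_2[t_1,\ldots,t_n]}\Z_2$. Your proof reaches the same conclusion through the Eilenberg--Moore spectral sequence of the corresponding pullback square, with the Cohen--Macaulay hypothesis doing exactly parallel work: it kills $\mathrm{Tor}^{p}$ for $p\geq 1$. Both arguments pivot on the same identification $t_i\mapsto l_{\lambda_i}$ under the classifying map. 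What the EMSS route buys you is that the quotient formula appears directly as $\mathrm{Tor}^{0}$, and because everything lives in a single filtration degree there is no extension problem; the ring structure and the surjectivity of $H^\ast(BK;\Z_2)\to H^\ast(Y;\Z_2)$ both fall out of the edge map. The price is convergence over the non-simply-connected base $B\Z_2^n$, which you correctly handle via Dwyer's nilpotence criterion. One small quibble: the sentence ``each involution acts unipotently, so the whole $\Z_2^n$-action is unipotent, hence nilpotent'' skips a step, since over a non-algebraically-closed field individual unipotence does not by itself give simultaneous triangularization; the clean justification is that $\Z_2[\Z_2^n]$ is a local ring whose augmentation ideal $(g_1-1,\ldots,g_n-1)$ is nilpotent (each $(g_i-1)^2=0$), so every finitely generated $\Z_2[\Z_2^n]$-module admits a finite filtration with trivial action on subquotients. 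With that clarified, this is a correct and genuinely different route from the Serre-spectral-sequence proof being cited, and a known alternative in the literature on Davis--Januszkiewicz spaces.
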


A simplicial complex is said to be \emph{pure} if its maximal simplices  (also simply called \emph{facets}) all have the same dimension.
For a pure simplicial complex $K$ of dimension $n-1$, an ordering $\sigma_1, \ldots, \sigma_s$ of the facets of $K$ is called a \emph{shelling} if the complex $B_j := \left( \bigcup_{i=1}^{j-1} \sigma_i \right) \cap \sigma_{j}$ is pure and $(n-2)$-dimensional for all $j=2, \ldots, s$.
If $K$ admits a shelling, then $K$ is said to be \emph{shellable}.
It is a well-known fact that a pure shellable simplicial complex is Cohen--Macaulay.
For a shellable simplicial complex $K$, there is a natural filtration
$\sigma_1=K_1\subset K_2\subset\cdots\subset K_s=K$ such that
	  $K_{j+1}=K_j \cup \sigma_{j+1}$.
For each $j$, we have a minimal face $r(\sigma_{j+1})$ among all faces of
	  $K_{j+1}$ that are not contained in $K_{j}$, called the
	  \emph{restriction} of $\sigma_{j+1}$, which
	  is unique. Formally, we set
$$
    r(\sigma_1)=\emptyset, \quad x_{r(\sigma_1)}=1,
$$
	  where $x_{\tau}=\prod_{i\in\tau}x_i$ for $\tau\subset [m]$.	

Throughout this section, we assume that $K$ is a pure shellable simplicial complex on $[m]$ of dimension $n-1$ such that $\sigma_1,\ldots,\sigma_s$ is a shelling of $K$ and its filtration is $\sigma_1=K_1\subset\cdots\subset K_s=K$.
We also assume that $\lambda \colon \Z_2^m \to \Z_2^n$ is a  characteristic function satisfying the non-singularity condition over $K$  and $Y$ is the real toric space associated to $K$ and $\lambda$.

The following basis for $H^\ast (Y;\Z_2)$ is convenient.
\begin{proposition}[{Cf.~\cite[pp.~82--83]{Stanley1996book}}]\label{prop:basis}
		Let $\sigma_1, \ldots, \sigma_s$ be a shelling of $K$
		with characteristic function $\lambda$.
		Then $\Z_2[K]$ is a free
		$\Z_2[l_{\lambda_1},\ldots,l_{\lambda_n}]$-module
		with basis $\{x_{r(\sigma_j)}\}_{j=1}^s$.
  \end{proposition}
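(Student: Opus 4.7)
The plan is to prove Proposition~\ref{prop:basis} by induction along the shelling filtration $\sigma_1 = K_1 \subset K_2 \subset \cdots \subset K_s = K$. Write $\theta_k = l_{\lambda_k}$ and $A = \Z_2[\theta_1,\ldots,\theta_n]$. The inductive claim is that $\Z_2[K_j]$ is a free $A$-module with basis $\{x_{r(\sigma_i)}\}_{i=1}^{j}$. The main engine is the short exact sequence of $A$-modules
\[
0 \to I_j \to \Z_2[K_j] \to \Z_2[K_{j-1}] \to 0,
\]
where $I_j$ is the kernel of the surjection induced by $K_{j-1}\subset K_j$; equivalently, $I_j$ is the $\Z_2$-span of $\{x_\tau : \tau\in K_j\setminus K_{j-1}\}$.

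The base case $j=1$ is transparent: $K_1=\sigma_1$ is an $(n-1)$-simplex, so $\Z_2[K_1]=\Z_2[x_i:i\in\sigma_1]$ is a polynomial ring on $n$ variables, and non-singularity of $\lambda$ on $\sigma_1$ (Proposition~\ref{prop:Stan1}) says that the restrictions $\bar{\theta}_k := \sum_{i\in\lambda_k\cap \sigma_1} x_i$ form a $\Z_2$-basis of the linear forms. Hence $\Z_2[K_1]$ is a free $A$-module of rank one generated by $1=x_{r(\sigma_1)}$.

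For the inductive step the central observation is that the definition of the restriction $r(\sigma_j)$ as the unique minimal new face forces
\[
K_j\setminus K_{j-1} = \{\tau : r(\sigma_j)\subseteq \tau \subseteq \sigma_j\},
\]
so as a graded $\Z_2$-vector space $I_j = x_{r(\sigma_j)}\cdot \Z_2[x_i:i\in\sigma_j]$. Next, for any $i\notin\sigma_j$ and any $\tau$ in the above range, the set $\tau\cup\{i\}$ still contains $r(\sigma_j)$ and so cannot lie in $K_{j-1}$; since it is not contained in $\sigma_j$ either, it is not a face of $K_j$, whence $x_i$ annihilates $I_j$. Consequently the $A$-action on $I_j$ factors through the restricted forms $\bar\theta_k=\sum_{i\in \lambda_k\cap\sigma_j}x_i$, and non-singularity of $\lambda$ on the facet $\sigma_j$ again implies that $\{\bar\theta_k\}_{k=1}^n$ is a basis of linear forms in $\Z_2[x_i:i\in\sigma_j]$. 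Thus $\Z_2[x_i:i\in\sigma_j]$ is a free module of rank one over $A$, and multiplication by $x_{r(\sigma_j)}$ identifies $I_j$ with a free $A$-module of rank one on the generator $x_{r(\sigma_j)}$.

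Finally, the inductive hypothesis says $\Z_2[K_{j-1}]$ is $A$-free, so the short exact sequence splits and
\[
\Z_2[K_j]\cong I_j \oplus \Z_2[K_{j-1}]
\]
as $A$-modules. Lifting the basis $\{x_{r(\sigma_i)}\}_{i<j}$ from $\Z_2[K_{j-1}]$ back to $\Z_2[K_j]$ tautologically (these monomials are well-defined elements of the face ring of every $K_\ell\supseteq K_{j-1}$), and adjoining the generator $x_{r(\sigma_j)}$ of $I_j$, yields the desired basis $\{x_{r(\sigma_i)}\}_{i=1}^j$ and completes the induction. The only place where real care is required is Step~3, where one must verify both that the $x_i$ for $i\notin\sigma_j$ act as zero on $I_j$ and that the resulting action of $A$ on $\Z_2[x_i:i\in\sigma_j]$ is free of rank one; once these are in hand, splitting the sequence and running the induction is routine.
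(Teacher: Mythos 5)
Your proof is correct. The paper does not actually prove this proposition; it simply cites Stanley (\cite[pp.~82--83]{Stanley1996book}), so there is no in-paper argument to compare against. What you have written is essentially the standard textbook proof from Stanley's book: filter by the shelling, look at the short exact sequence $0 \to I_j \to \Z_2[K_j] \to \Z_2[K_{j-1}] \to 0$, use the interval description $K_j\setminus K_{j-1}=\{\tau:r(\sigma_j)\subseteq\tau\subseteq\sigma_j\}$ to identify $I_j$ with $x_{r(\sigma_j)}\cdot\Z_2[x_i:i\in\sigma_j]$, observe that $x_i$ annihilates $I_j$ for $i\notin\sigma_j$, invoke non-singularity of $\lambda$ on the facet $\sigma_j$ to see that $I_j$ is $A$-free of rank one on $x_{r(\sigma_j)}$, and then split. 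All of these steps are valid.

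One small polish: after the exact sequence is established, it is slightly cleaner to avoid choosing a splitting and instead verify directly that $\{x_{r(\sigma_i)}\}_{i=1}^j$ is a basis of $\Z_2[K_j]$ from exactness alone (linear independence: project a relation to $\Z_2[K_{j-1}]$ to kill the $i<j$ coefficients, then use freeness of $I_j$; spanning: lift an expression from $\Z_2[K_{j-1}]$ and absorb the difference into $I_j$). The reason is that the literal monomials $x_{r(\sigma_i)}\in\Z_2[K_j]$, $i<j$, need not lie in the image of a given section $\Z_2[K_{j-1}]\to\Z_2[K_j]$, so ``lifting the basis tautologically'' and ``using the splitting'' are two different (though both workable) arguments, and conflating them briefly obscures which one you are actually running. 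This is a presentational point, not a gap.
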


   Recall that for $0\leq k\leq n$, an $n$-dimensional $k$-handle
   $W$ on a $\PL$
   manifold $X$ is a copy of $(D^1)^k\times (D^1)^{n-k}$,
   attached to the boundary $\partial X$ via a $\PL$ embedding
  $f\colon \partial (D^1)^k\times (D^1)^{n-k} \to \partial X$, where $D^1 =[0,1]$ is an interval.
The number $k$ is called the \emph{index} of the handle.
 It turns out that the union $X\cup_{f}W$ is
 again a $\PL$ $n$-manifold (cf.~\cite[Chapter 6]{Rourke-Sanderson1972book}).

For $\sigma\subset [m]$, let
$I_{\sigma}\subset (D^1)^m$ be
the cube whose $i$th component is either $D^1$ if $i\in\sigma$,
or $\{1\}$ otherwise, and $B_{\sigma} = (\underline{D^1}, \underline{S^0})^{2^\sigma }$.
Clearly $\dim I_{\sigma}=\card(\sigma)$.
We set $\RZ_1=B_{\sigma_1}$.
As $\lambda|_{\sigma_1}$ is non-degenerate,
$\RZ_1$ coincides with the orbit of $I_{\sigma_1}$ under
the action of $\ker \lambda$, and
the orbit map $\pi$ maps $I_{\sigma_1}$ homeomorphically onto $Y_1$.
For $j=1,\ldots,s-1$, $\RZ_{j+1}$ is
obtained from $\RZ_j$ by attaching
$B_{\sigma_{j+1}}$, a disjoint union of $2^{m-n}$ cubes
of dimension $n$.

Consider the cube
\begin{equation}\label{eq:cube}
  I_{\sigma_{j+1}}=I_{r(\sigma_{j+1})}\times
  I_{\sigma_{j+1}\setminus r(\sigma_{j+1})}
\end{equation}
  in $\RZ_{j+1}$.
Then, the non-degeneracy of $\lambda|_{\sigma_{j+1}}$ implies
  that $B_{\sigma_{j+1}}$ is the orbit of $I^n_{\sigma_{j+1}}$
  under $\ker\lambda$, which are attached along
  the orbit of $\partial I_{r(\sigma_{j+1})}\times  I_{\sigma_{j+1}\setminus r(\sigma_{j+1})}$.
  Moreover, this attachment is along the topological
  boundary $\partial \RZ_j$
  if and only if $\sigma_{j+1}$ is attached along the topological
  boundary $\partial K_j$. The orbit map $\pi$ maps $I_{\sigma_{j+1}}$
  homeomorphically onto its image in
  $M_{j+1}$, which is a $\PL$ embedding into
  $M_j$
  when restricted to $\partial I_{r(\sigma_{j+1})}\times
  I_{\sigma_{j+1}\setminus r(\sigma_{j+1})}$.

Suppose the cardinality
of $r(\sigma_j)$ is $k_j$.
    The next proposition follows from an induction on $j=1,\ldots,s-1$
	(see Example~\ref{exm:1} for an illustration).

\begin{proposition}\label{prop:dec}
The filtration $Y_1\subset\cdots\subset Y_s$ gives a $\PL$
	  handle decomposition of
	  $Y$, where $Y_1=(D^1)^n$, $Y_s=Y$ and $Y_{j+1}$ is obtained from $Y_j$ by attaching
	  a handle of index $k_{j+1}$, provided that
	  $K_{j+1}$ is obtained by attaching $\sigma_{j+1}$ along
	 $\partial K_j$ for $j=1,\ldots,s-1$.
	  In general,
	  $Y_{j+1}$ is obtained from $Y_j$ by attaching a cell of dimension
	  $k_{j+1}$, up to homotopy.
\end{proposition}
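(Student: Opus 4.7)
\medskip
\noindent\textbf{Proof proposal.} The plan is to induct on $j$, using the cube decomposition~\eqref{eq:cube} together with the defining property of the restriction $r(\sigma_{j+1})$. For the base case $j=1$, the non-singularity of $\lambda|_{\sigma_1}$ implies that $\ker\lambda$ projects surjectively onto the $m-n$ coordinates outside $\sigma_1$; hence $\RZ_1=B_{\sigma_1}=\ker\lambda\cdot I_{\sigma_1}$ and the orbit map $\pi$ restricts to a homeomorphism $I_{\sigma_1}\to Y_1\cong (D^1)^n$, as required.

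For the inductive step, set $k=k_{j+1}$, $\rho=r(\sigma_{j+1})$, and $\tau=\sigma_{j+1}\setminus\rho$, so that \eqref{eq:cube} factors as $I_{\sigma_{j+1}}=I_\rho\times I_\tau$ with $\dim I_\rho=k$ and $\dim I_\tau=n-k$. By the non-singularity of $\lambda|_{\sigma_{j+1}}$, $\pi$ restricts to a $\PL$ homeomorphism from $I_{\sigma_{j+1}}$ onto its image in $Y_{j+1}$. The crux of the argument is then the combinatorial identity
\[
    \pi(I_{\sigma_{j+1}})\cap Y_j \;=\; \pi\bigl(\partial I_\rho\times I_\tau\bigr),
\]
which I would verify as follows. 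A closed face of the cube $I_{\sigma_{j+1}}$ is indexed by a subset $\nu\subseteq\sigma_{j+1}$ of free coordinates together with a choice of $\{0,1\}$-values on $\sigma_{j+1}\setminus\nu$; such a face already lies in $\RZ_j$ precisely when $\nu\in K_j$, which by the defining property of the restriction is equivalent to $\rho\not\subseteq\nu$. In the product coordinates this amounts to requiring that at least one $\rho$-coordinate be fixed to $0$ or $1$, i.e., that the face lies in $\partial I_\rho\times I_\tau$.

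Combining these observations, $Y_{j+1}$ is obtained from $Y_j$ by gluing the $n$-cube $I_\rho\times I_\tau\cong (D^1)^k\times (D^1)^{n-k}$ along $\partial(D^1)^k\times (D^1)^{n-k}$ via the $\PL$ embedding $\pi|_{\partial I_\rho\times I_\tau}$. When $\sigma_{j+1}$ is attached along $\partial K_j$, this embedding lands in $\partial Y_j$, producing a genuine $\PL$ handle attachment of index $k_{j+1}$; in the remaining case, the cube $I_\rho\times I_\tau$ deformation retracts onto its core $I_\rho\times \{0\}$ rel.\ the attaching region, so up to homotopy one is simply attaching a $k_{j+1}$-cell. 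The main obstacle I foresee is not the combinatorial identity itself but the verification that $\pi|_{\partial I_\rho\times I_\tau}$ really embeds into $Y_j$ (rather than being identified with something outside $Y_j$), and that the distinct translates of $I_{\sigma_{j+1}}$ inside $B_{\sigma_{j+1}}$ descend to disjoint handles in $Y_{j+1}$; both points should reduce to applying the non-singularity of $\lambda$ on each face $\nu\subsetneq\sigma_{j+1}$ and to the inductive control of the action of $\ker\lambda$ on earlier cubes in the shelling.
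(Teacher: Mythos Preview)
Your argument is correct and follows exactly the inductive scheme the paper indicates; indeed the paper gives no further proof beyond the sentence ``the next proposition follows from an induction on $j$'' after the setup surrounding \eqref{eq:cube}, and your combinatorial identification of $\pi(I_{\sigma_{j+1}})\cap Y_j$ with $\pi(\partial I_\rho\times I_\tau)$ via the defining property of $r(\sigma_{j+1})$ is precisely the point being left implicit. One small correction to your closing concern: the $2^{m-n}$ translates of $I_{\sigma_{j+1}}$ inside $B_{\sigma_{j+1}}$ constitute a \emph{single} $\ker\lambda$-orbit (this is what the non-singularity of $\lambda|_{\sigma_{j+1}}$ buys you), so they descend to one and the same cube in $Y_{j+1}$, not to several disjoint handles.
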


	\begin{remark}\label{rem:smooth}
	 When $K$ is polytopal, that is, $K$ is the boundary complex of
	 a convex polytope $P \subset\R^n$, one can choose a generic
	 linear function $f\colon\R^n\to\R$ such
	 that the gradient of $f$ gives an ordering $v_1,\ldots,v_s$
	 of vertices of $P$, as well
	 as an orientation for each edge of $P$.
	 As a vertex $v$ of $P$ corresponds to a facet $\sigma$ of $K$, and
	 the $n$ edges around $v$ correspond to the $n$ facets of $\sigma$, 	
	 $f$ gives a shelling $\sigma_1,\ldots,\sigma_s$ of $K$.
    In this case, we can use the smooth structure of $P\subset\R^n$
	 to smooth the $\PL$ handles of $Y$,
	 which coincides with that
	 given in \cite[pp.~431--432]{Davis-Januszkiewicz1991}.
    Let $p\colon Y\to P$ be the orbit map associated to
	 the $\Z_2^n$-action on $Y$.
    Then, it can be checked directly that $\pi\colon\RZ_K \to Y$
	 maps the interior of $I_{\sigma_i}$ homeomorphically
	 onto a neighborhood
	 $p^{-1}(U_i)$, $U_i$ a neighborhood of $v_i\in P$,
     where the closure of $p^{-1}(U_i)$ is the smooth handle
	 from the Morse function.
\end{remark}

   \begin{remark}
	 It is a known fact in $\PL$ topology that, 	
	 if each attaching of $\sigma_{j+1}$ is along
	 $\partial K_j$ (in the shelling),
	  then the geometric realization of $K$ is either a $\PL$ sphere or 	
	  a $\PL$ disk of dimension $n-1$,
	  depending on whether the attaching of the last facet $\sigma_s$
	  is along its whole boundary or not.
	  Therefore, when $K$ is shellable (and admits a characteristic
	  function), $\RZ_K$ (respectively, $Y$) is a closed
	  $\PL$ $n$-manifold if and only if $K$ is a $\PL$
	  $(n-1)$-sphere. It should be noted that, in fact, the assumption of shellability
		is not essential. See \cite[Theorem 2.3]{Cai2017}.
	\end{remark}

Let $l_{\lambda_1}|_j,\ldots,l_{\lambda_n}|_j$ be the
	  image under $\Z_2[K]\to\Z_2[K_j]$
	  of $l_{\lambda_1},\ldots,l_{\lambda_n}$, the
	  l.s.o.p.\ associated to the characteristic function $\lambda$
	  (see Proposition \ref{prop:Stan1}),
	   which is induced
	  by the inclusion $K_j\to K$.

\begin{proposition}\label{prop:DJSta}
		  The ring $\Z_2[K_j]$ is a free
		  $\Z_2[l_{\lambda_1}|_j,\ldots,
			l_{\lambda_n}|_j]$-module with basis $\{x_{r(\sigma_t)}\}_{t\leq j}$,
			and $H^*(Y_j;\Z_2)$ is isomorphic to
			\[ \Z_2[K_j]/(l_{\lambda_1}|_j,\ldots,
			l_{\lambda_n}|_j)\cong
			\Z_2[K]/(l_{\lambda_1},\ldots,l_{\lambda_n},
			x_{r(\sigma_{j+1})},\ldots,
			x_{r(\sigma_{s})})\]
			as graded rings.
\end{proposition}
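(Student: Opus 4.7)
The plan is to apply Propositions~\ref{prop:Stan1}, \ref{prop:basis}, and \ref{prop:DJ} to the subcomplex $K_j$ after verifying that the hypotheses are inherited from $K$. Specifically, the initial segment $\sigma_1,\ldots,\sigma_j$ is manifestly a shelling of $K_j$, and for each $t\leq j$ the restriction $r(\sigma_t)$ depends only on $\sigma_t$ and its predecessors $\sigma_1,\ldots,\sigma_{t-1}$, so it is the same whether computed in $K$ or in $K_j$. Therefore $K_j$ is pure shellable of dimension $n-1$, hence Cohen--Macaulay, and the non-singularity of $\lambda$ over $K$ restricts to non-singularity over $K_j$, so by Proposition~\ref{prop:Stan1} the images $l_{\lambda_1}|_j,\ldots,l_{\lambda_n}|_j$ form an l.s.o.p.\ of $\Z_2[K_j]$. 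The first assertion of the proposition then follows at once from Proposition~\ref{prop:basis} applied to $K_j$ with this shelling and this l.s.o.p.

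For the cohomology part, I would first identify $Y_j$ with the real toric space $M^\R(K_j,\lambda)$. From the inductive construction recalled in the excerpt, $\RZ_j$ is obtained by successively attaching $B_{\sigma_1},\ldots,B_{\sigma_j}$, and since each $B_{\sigma_t}$ equals the polyhedral product $(\underline{D^1},\underline{S^0})^{2^{\sigma_t}}$, a routine induction on $j$ gives $\RZ_j=\RZ_{K_j}$; quotienting by $\ker\lambda$ then yields $Y_j=M^\R(K_j,\lambda)$. Since $K_j$ is Cohen--Macaulay and $\lambda$ is a characteristic function for $K_j$, Proposition~\ref{prop:DJ} applies and gives the first claimed isomorphism $H^*(Y_j;\Z_2)\cong\Z_2[K_j]/(l_{\lambda_1}|_j,\ldots,l_{\lambda_n}|_j)$.

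The only remaining point, and the one I expect to require genuine argument, is the algebraic identification $\Z_2[K_j]\cong\Z_2[K]/(x_{r(\sigma_{j+1})},\ldots,x_{r(\sigma_s)})$. The surjection $\Z_2[K]\to\Z_2[K_j]$ induced by $K_j\hookrightarrow K$ has kernel generated by $\{x_\tau:\tau\in K,\ \tau\notin K_j\}$, and I would show this equals the ideal $J=(x_{r(\sigma_{j+1})},\ldots,x_{r(\sigma_s)})$. The inclusion $J\subseteq\ker$ is clear because for $t>j$ the defining property of the restriction gives $r(\sigma_t)\notin K_{t-1}\supseteq K_j$. For the reverse inclusion, given $\tau\in K\setminus K_j$, let $t$ be the least index with $\tau\subseteq\sigma_t$; then $\tau\notin K_{t-1}$, so the characterization of $r(\sigma_t)$ as the minimal face of $\sigma_t$ not contained in $K_{t-1}$ forces $r(\sigma_t)\subseteq\tau$, and $t>j$ because $\tau\notin K_j$. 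Hence $x_\tau$ is divisible in $\Z_2[K]$ by $x_{r(\sigma_t)}\in J$, proving the two ideals agree. Combining this with the first cohomology isomorphism yields the second isomorphism and completes the proof.
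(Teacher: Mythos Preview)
Your proof is correct and follows the same overall strategy as the paper: verify that $K_j$ inherits a shelling with the same restrictions, apply Proposition~\ref{prop:basis} for the free-module statement, and apply Proposition~\ref{prop:DJ} to $K_j$ for the cohomology isomorphism. The paper's proof is extremely terse on the last isomorphism, simply invoking that $\Z_2[K]\to\Z_2[K_j]$ agrees with $H^*(BK;\Z_2)\to H^*(BK_j;\Z_2)$; implicitly it is relying on the free-module bases already established on both sides (over $\Z_2[l_{\lambda_1},\ldots,l_{\lambda_n}]$) to see that the induced map $\Z_2[K]/(l_{\lambda_i})\to\Z_2[K_j]/(l_{\lambda_i}|_j)$ kills exactly the basis elements $x_{r(\sigma_t)}$ with $t>j$. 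Your argument instead computes the kernel of $\Z_2[K]\to\Z_2[K_j]$ directly at the level of Stanley--Reisner rings, using only the defining property of restrictions in a shelling; this is a cleaner and more self-contained justification of the same identity and does not require the basis statement as input.
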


\begin{proof}
		  Note that
		  $K_j$ is shellable, thus is Cohen--Macaulay.
      The first statement follows from Proposition~\ref{prop:basis}.
		The second follows from
		  Proposition~\ref{prop:DJ}, together with the observation
		  that
		  $\Z_2[K]\to\Z_2[K_j]$ coincides
		  with $H^*(BK;\Z_2)\to H^*(BK_j;\Z_2)$,
%		  $P_j$ the simple polyhedral complex associated to $K_j$,
		  which is induced from the inclusion $BK_j\to BK$
		  (see Proposition~\ref{prop:BP}).
		\end{proof}

\section{The cochain complex of $Y$}\label{sec:6}
Let $K$ be a pure shellable simplicial complex on $[m]$ whose dimension is $n-1$ and let $\lambda \colon \Z_2^m \to \Z_2^n$ be a characteristic function satisfying the non-singularity condition over $K$.
Let $Y = M^\R(K,\lambda)$ be the real toric space associated to $K$ and $\lambda$.
This section is devoted to the construction of the cochain complexes $(C^\ast(Y),d)$ of $Y$.

%in Theorem \ref{thm:main}, which is based on
%the decomposition of $Y$ given in Section
%\ref{sec:filtration} (cf. Proposition
%\ref{prop:dec}).

\subsection{Oriented cellular chain complex of $Y$}
An orientation of a $k$-cell $e$ in a $CW$-complex $X$ is a chosen generator in $H_k(e,\partial e)$, where $\partial e$ is the topological boundary of $e$.
An oriented cell $e$ will be denoted by $[e]$, and the boundary
$\partial [e]$ of an oriented $k$-cell $[e]$ is a $\Z$-linear sum of oriented $(k-1)$-cells.
Denote by $(C_\ast(X),\partial)$ the cellular chain complex with respect to the cell decomposition of $X$, where we use the notation $\partial$ again for the boundary operator in chain complexes.

We omit the proof of the following standard lemma.

\begin{lemma}\label{lem:cell}
Suppose $X$ is a $CW$-complex with a (left) action of a finite group $G$
  such that
\begin{equation}\label{eq:cell_in_action}
\text{for every $g\in G$ and every cell $e\subset X$,
  $g \cdot e$ is again a cell of $X$}.
\end{equation}
	Then the cell structure of $X$ descends to that of the orbit space $X/G$.
\end{lemma}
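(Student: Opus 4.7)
The plan is to take the orbits $Ge$ of cells $e$ of $X$ as the cells of $X/G$, using the quotient map $p\colon X\to X/G$ to transport the characteristic maps. Condition \eqref{eq:cell_in_action} is precisely what makes the assignment $e\mapsto p(e)$ well-defined on the set of cells. First I would fix, for each $G$-orbit of cells, a representative $e$ of open dimension $k$ with characteristic map $\phi_e\colon (D^k,S^{k-1})\to (X,X^{(k-1)})$, and declare $p\circ\phi_e$ to be the candidate characteristic map for the cell $p(e)\subset X/G$. For this to produce an open $k$-cell I need $p$ to be injective on $e^\circ$; equivalently, the setwise stabilizer $G_e$ must act trivially on $e^\circ$. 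In the situation of interest to the present paper, $G=\ker\lambda$ acts freely on $\RZ_K$ by the non-singularity condition, so $G_e$ is trivial and this injectivity is automatic. In the general case one would first pass to a barycentric subdivision to guarantee it, but that refinement is not needed here.

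Next I would verify the two CW axioms. Since $G$ is finite and $X$ is Hausdorff, $p$ is a closed map with finite fibres, so $\overline{p(e)}=p(\overline{e})$ meets only the images of the finitely many cells of $X$ that $\overline{e}$ meets; this gives closure-finiteness. For the weak topology, a subset $A\subset X/G$ is closed iff $p^{-1}(A)$ is closed in $X$, iff $p^{-1}(A)\cap\overline{e'}$ is closed for every cell $e'$ of $X$, iff $A\cap\overline{p(e')}$ is closed for every cell $p(e')$ of $X/G$; the last equivalence uses closedness of $p$ once more. Combined with the identification of $p(e)$ as the image of $e$ under a map that is a homeomorphism on $e^\circ$, this exhibits $X/G$ as a CW complex whose cells are exactly the $G$-orbits of cells of $X$.

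The main obstacle is the stabilizer subtlety in the first step. If $G_e$ acted nontrivially on $e$, then $p$ would collapse points inside $e^\circ$ and the naive quotient would fail to be an open disc. Confirming that this pathology does not occur — either by appealing to freeness of the action, as in all applications of this lemma in the paper, or, in greater generality, by subdividing in advance — is the one delicate point; everything else is a routine transport of the CW structure through the quotient map.
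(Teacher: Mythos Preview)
The paper does not prove this lemma at all: it is stated with the preface ``We omit the proof of the following standard lemma.'' Your argument is correct and supplies exactly the routine verification the authors chose to skip. In particular, your identification of the stabilizer subtlety is apt: the lemma as phrased is slightly informal, and a setwise stabilizer $G_e$ acting nontrivially on $e^\circ$ would indeed spoil the descent (e.g.\ $\Z_2$ acting on $D^1=[-1,1]$ by $x\mapsto -x$ fixes the unique $1$-cell setwise but the quotient map is not injective on its interior). You are also right that this issue never arises in the paper, since the only application is to $G=\ker\lambda$ acting freely on $\RZ_K$, and your handling of closure-finiteness and the weak topology via closedness of the finite-fibre quotient map is the standard route.
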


If an action of $G$ on $X$ satisfies \eqref{eq:cell_in_action}, then each transformation $g\colon X \to X$ is cellular.
Thus, it induces a chain map $g_\ast \colon (C_\ast(X), \partial) \to (C_\ast(X), \partial)$, namely, if $g_\ast([e])=\varepsilon [g \cdot e]$, then
  $g_\ast (\partial [e])=\varepsilon \partial [g\cdot e]$, $\varepsilon = \pm 1$, where $g_\ast \colon H_\ast(e,\partial e)\to H_\ast (g\cdot e,\partial (g\cdot e))$ is induced by $g$.
  Let $\pi\colon X\to X/G$ be the orbit map. As $\pi$ is cellular, it also induces a chain map
  $\pi_\ast\colon (C_\ast (X),\partial)\to (C_\ast (X/G),\partial)$,
  i.e.,
  \begin{equation} 	\label{eq:bdM0}
	\partial\pi_\ast ([e])=\pi_\ast (\partial [e]).
  \end{equation}

Regarding the interval $D^1 = [0,1]$ as the $CW$-complex consisting of two $0$-cells $0$, $1$ and one $1$-cell $\underline{01}$, the chain complex $C_\ast(D_i^1)$ is the graded $\Z$-module $\langle [0], [1], [\underline{01}] \rangle$ such that $\partial [\underline{01}] = [1] - [0]$ and $\partial [1] = \partial [0]= 0$.

We note that the $m$-cube $(D^1)^m$ has a natural $CW$-structure coming from the Cartesian product operation.
More precisely, let $D_i^1 \cong [0,1]$ be the $i$th factor of $(D^1)^m = D^1_1 \times \cdots \times D^1_m$ that is a $CW$-complex with two $0$-cells $0_i$, $1_i$, and one $1$-cell $\underline{01}_i$.
Then every cell of $(D^1)^m$ is given as
\[
    e=e_1\times\cdots\times e_m, \quad e_i=0_i,\ 1_i,  \text{ or }\underline{01}_i.
\]
Each cell $e$ is called a \emph{cubical cell}.
For a cubical cell $e$, let us define
  $$
	\sigma_{e}=\{i\mid e_i= \underline{01}_i\}, \quad \tau_e^+=\{i\mid e_i=1_i\},
	\quad \text{and} \quad \tau_e^-=\{i\mid e_i=0_i\},
  $$
which are disjoint subsets with their union $[m]$.
For given $i\in\sigma_e$, the $i$th \emph{front face} $\partial_i^+e$ and the $i$th \emph{back face} $\partial_i^-e$ are defined as
$$
    \partial_i^+e = e_1 \times \cdots \times e_{i-1} \times 1_i \times e_{i+1} \times \cdots \times e_m
$$
and
$$
    \partial_i^-e = e_1 \times \cdots \times e_{i-1} \times 0_i \times e_{i+1} \times \cdots \times e_m.
$$

By the Eilenberg--Zilber theorem \cite{Eilenberg-Zilber1953}, the oriented cellular chain $[e]=[e_1] \otimes \cdots \otimes [e_m]$ is endowed with the boundary operator
\begin{align}
  \partial [e] &=\sum_{i=1}^m (-1)^{(\sigma_e,i)}[e_1] \otimes\ldots\otimes [e_{i-1}]\otimes
  \partial [e_i] \otimes [e_{i+1}] \otimes\ldots\otimes [e_m] \nonumber
  \\
  &=
  \sum_{i\in\sigma_e}(-1)^{(\sigma_e,i)}([\partial_i^+e]-[\partial_i^-e]),
  \label{eq:bd}
\end{align}
where $(\sigma_e,i)=\mathrm{card}(\{j\in\sigma_e\mid j<i\})$.
In what follows, the notation $[e]$ means that the cubical cell $e$ is endowed with the orientation from the Eilenberg--Zilber theorem.

We see that the real moment-angle complex $\RZ_K \subset (D^1)^m$ has a cell decomposition with cubical cells $e$ such that $\sigma_e\in K$.
Let $(C_\ast (\RZ_K),\partial)$ be the oriented cellular chain complex above, where $\partial$ follows \eqref{eq:bd}.
  It turns out that  the homology of $(C_\ast (\RZ_K),\partial)$  gives the cellular homology of $\RZ_K$   (cf. \cite[Theorem 3.1]{Cai2017}).

  With this decomposition, as the action of $\ker \lambda$ on $\RZ_K$ satisfies \eqref{eq:cell_in_action}, it gives a cell structure on $Y$ by Lemma~\ref{lem:cell}.
More explicitly, for $g=(g_1, g_2, \ldots, g_m) \in \ker\lambda$, we have
  \begin{equation} \label{eq:ref}
	g_\ast ([e]) =\bigotimes_{i=1}^m (g_i)_\ast [e_i] =(-1)^{(\sigma_e,g)}[g\cdot e],
  \end{equation}
  where $(\sigma_e,g)=\card (\{i\in\sigma_e\mid g_i=1\})$ and $g \cdot e$ is again cubical.
In addition, if $g_i=1$, then
  $(g_i)_\ast ([\underline{01}_i])=-[\underline{01}_i]$ reversing the orientation,
  $(g_i)_\ast ([1_i])=[0_i]$, and $(g_i)_\ast ([0_i])=[1_i]$.

To describe the chain complex $(C_\ast (Y),\partial)$ in terms of cubical cells, we introduce the notion of a canonical cubical cell, which is the key idea of our construction.
Let $\fF_K = \{\sigma_1, \ldots, \sigma_s\}$ be the set of maximal faces (simply, \emph{facets}) of $K$ and a shelling $\sigma_1,\ldots,\sigma_s$ of $K$ is given in this order.
We consider the map $f\colon K\to \fF_K$ such that $f(\sigma)$ is the first facet in the sequence that contains $\sigma$.
A cubical cell $e$ is said to be \emph{canonical} if $\tau_e^-\subset f(\sigma_e)$.

%For $\sigma \in K$, denote by $I_{\sigma}=e_1 \times \cdots \times e_m$ the cubical cell with $e_i=1_i$, for all $i\not\in\sigma$, and $e_i= \underline{01}_i$ for $i\in\sigma$.

%	For $g=(g_1,g_2,\ldots, g_m)\in\Z_2^m$, let $\sigma_g\subset [m]$ be the index set of non-zero entries of $g$.
As we mentioned before, we identify $\Z_2^m$ and the power set of $[m]$ in the standard way.

\begin{lemma}\label{lem:g}
Let $\pi\colon\RZ_K \to Y$ be the orbit map.
For each cubical cell $e$ of $\RZ_K$, there exists a unique $g_e\in \ker \lambda$ such that $g_e \cdot e$ is canonical.
\end{lemma}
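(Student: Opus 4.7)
The plan is to exploit the non-singularity of $\lambda$ over the facet $\phi := f(\sigma_e)$, which makes $\{\lambda(\e_i) : i\in\phi\}$ a basis of $\Z_2^n$, and to split the determination of $g_e$ into a part \emph{forced} by the canonicalness requirement on coordinates $i \notin \phi$ and a part \emph{forced} by the kernel condition $g_e \in \ker\lambda$ on coordinates $i \in \phi$.

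First I would observe that for any $g \in \ker\lambda$ (viewed as a subset of $[m]$), the action on a cubical cell $e$ swaps only the $0_i/1_i$ entries with $i \in g$; the entries $e_i = \underline{01}_i$, equivalently $i \in \sigma_e$, are unchanged as cells (cf.~\eqref{eq:ref}, ignoring orientations). Hence $\sigma_{g \cdot e} = \sigma_e$, and so $f(\sigma_{g\cdot e}) = f(\sigma_e) = \phi$. Since $\sigma_e \subset \phi$, the canonicalness of $g \cdot e$, namely $\tau^-_{g\cdot e} \subset \phi$, is equivalent to requiring $(g \cdot e)_i = 1_i$ for every $i \in [m] \setminus \phi$.

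Second, the latter condition uniquely determines $g_e|_{[m]\setminus\phi}$: for each $i \notin \phi$ one must take $g_i = 1$ if $e_i = 0_i$ and $g_i = 0$ if $e_i = 1_i$. Call this prescribed vector $h \in \Z_2^{[m] \setminus \phi}$. The values $g_i$ with $i \in \phi$ do not affect canonicalness at all (if $i \in \sigma_e$ then $e_i = \underline{01}_i$ is untouched, and if $i \in \phi \setminus \sigma_e$ then both possible values of $(g \cdot e)_i$ are allowed), so the only remaining constraint is $g \in \ker\lambda$, which reads
\[ \sum_{i \in \phi} g_i\,\lambda(\e_i) \;=\; \sum_{i \notin \phi} h_i\,\lambda(\e_i) \]
in $\Z_2^n$. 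By the non-singularity condition applied to the facet $\phi$, the vectors $\{\lambda(\e_i) : i \in \phi\}$ are linearly independent in $\Z_2^n$ and, since $|\phi| = n$, form a basis. Thus this linear equation has a unique solution $(g_i)_{i \in \phi}$, which completes the construction of $g_e$ and simultaneously proves its uniqueness.

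The argument is essentially bookkeeping and no step is hard; the only point worth checking carefully is the inclusion $\sigma_e \subset \phi$ (immediate from $\phi = f(\sigma_e)$), which is what allows canonicalness to be expressed purely as a condition on the $[m] \setminus \phi$ coordinates and thereby decouples the two halves of the determination of $g_e$.
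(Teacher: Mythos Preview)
Your proof is correct and follows essentially the same approach as the paper's. The paper phrases the argument via an explicit matrix representation of $\ker\lambda$ (with an identity block on the rows indexed by $[m]\setminus f(\sigma_e)$), whereas you phrase it as a coordinate-free linear-algebra argument using that $\{\lambda(\e_i):i\in\phi\}$ is a basis of $\Z_2^n$; these are two formulations of the same fact that the projection $\ker\lambda\to\Z_2^{[m]\setminus\phi}$ is an isomorphism.
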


\begin{proof}
	Consider $\tau=\tau_e^-\setminus f(\sigma_e)$.
    One can easily see that $g_e \cdot e$ is canonical if $g_e \setminus f(\sigma_e)=\tau$.
    Without loss of generality, suppose $f(\sigma_e)=\{1,2,\ldots,n\}$. 	
    The non-degeneracy of $\lambda|_{f(\sigma_e)}$ implies that
	$\ker \lambda$ can be represented as the column space of an $m\times (m-n)$-matrix of the form
    $$
    \bordermatrix{%
      &1	 &2     &\cdots  & m-n  \cr
1	  &\ast 	 & \ast     &\cdots	 & \ast     \cr
2     &\ast 	 & \ast     &\cdots	 & \ast     \cr
\vdots&\vdots   &\vdots     &\ddots	 &\vdots \cr
n	  &\ast      & \ast     &\cdots	 & \ast \cr
n+1	  &1      & 0     &\cdots	 & 0 \cr
n+2	  &0      & 1     &\cdots	 & 0 \cr
\vdots&\vdots    & \vdots     &\ddots	 & \vdots \cr
m     &0      & 0     &\cdots	 & 1 \cr
  }.
    $$
%
%
%	\[  		\bordermatrix{%
%      &1	 &2     &\cdots  & n    & n+1	 & n+2	 &\cdots   &m     \cr
%1	  &\ast 	 & \ast     &\cdots	 & \ast     & 1	     & 0	 &\cdots   &0     \cr
%2     &\ast 	 & \ast     &\cdots	 & \ast     & 0	     & 1	 &\cdots   &0     \cr
%\cdots&\cdots&\cdots&\cdots	 &\cdots& \cdots & \cdots&\cdots   &\cdots\cr
%m-n	  &\ast      & \ast     &\cdots	 & \ast     & 0	     & 0	 &\cdots   &1     \cr
%  }.\]
    Let $v_i$ be the $i$th column vector of $\ker \lambda$.
    The element $g_e:=\sum_{i\in\tau}v_{i-n}$ satisfies $g_e \setminus f(\sigma_e)=\tau$, which is unique.
\end{proof}

By Lemma~\ref{lem:g}, each cell of $Y$ is the image under $\pi$ of a canonical one.
For each canonical cell $e$, we assume that the orientation of $\pi e$ is given by $\pi_\ast ([e])$, that is, $[\pi e] = \pi_\ast ([e])$.
Then, by  \eqref{eq:bdM0} and \eqref{eq:bd}, the boundary operator in $(C_\ast (Y),\partial)$ follows
$$
	  \partial[\pi e]=\pi_\ast \partial [e] =\sum_{i\in\sigma_e}(-1)^{(\sigma_e,i)}
	  (\pi_\ast [\partial_i^+e]-\pi_\ast [\partial_i^-e]).
$$

\begin{example}\label{exm:1}
    Let $K$ be a simplicial complex on $\{ 1, 2, 3\}$ with a shelling $\{1,2\}$, $\{2,3\}$ and $\{1,3\}$.
    Their restrictions are $\emptyset$, $\{3\}$, and $\{1,3\}$,  respectively.
    We consider a characteristic function $\lambda$ represented by a matrix
$
  \begin{pmatrix}
	1 & 0 & 1 \\
	0 & 1 & 1
  \end{pmatrix}
$.
Then, it satisfies the non-singularity condition, and its kernel is generated by $(1,1,1) \in \Z_2^3$.
    As
\begin{align*}
    \RZ_K &= D^1 \times D^1 \times S^0 \cup S^0 \times D^1 \times D^1 \cup D^1 \times S^0 \times D^1 \\
& = \partial (D^1 \times D^1 \times D^1) \cong S^2,
\end{align*}
and $\RZ_K \to Y$ is a double cover of $Y = M^\R(K,\lambda)$, one can see that $Y$ is the real projective plane $\RP^2$.
%  Figures \ref{fig:E1} and \ref{fig:E2} illustrate
%  the decompositions of $\RZ_K$ and $Y = M^\R(K,\lambda)$,
%  respectively.
%  The $2$-cells of $Y$ are equipped with
%  canonical orientations.
%  \begin{figure}
%   \begin{center}
%           \includegraphics[width=8cm]{E1.pdf}
%   \end{center}
%   \caption{Example \ref{exm:1}: the filtration of $\RZ$}
%    \label{fig:E1}
% \end{figure}
    We set $I_{12} = \underline{01}_1 \times \underline{01}_2 \times 1_3$, $I_{23} = 1_1 \times \underline{01}_2 \times \underline{01}_3$, and  $I_{13} = \underline{01}_1 \times 1_2 \times \underline{01}_3$.
    One can see that all canonical faces are contained in one of $I_{12}$, $I_{23}$, and $I_{13}$.
    All faces of $I_{12}$ are canonical. Among the faces of $I_{23}$, $\partial_3^-I_{23}$ is not canonical.
    We take $g_{\partial_3^-I_{23}}$ as in Lemma~\ref{lem:g}, then $g_{\partial_3^-I_{23}} = (1,1,1) \in \ker \lambda$ sends $\pi\partial_3^-I_{23}$ to
 $\pi\partial_1^-I_{12}$ with the orientation reversed.
 Likewise,
 $1$-cells $[\pi\partial_3^-I_{13}]$
 and $[\pi\partial_1^-I_{13}]$ are identified with the image of
 the canonical ones,
 $-[\pi\partial_2^-I_{12}]$ and $-[\pi\partial_2^-I_{23}]$,
 respectively.
 Thus, one obtains the cell structure of $Y \cong \RP^2$, as illustrated in Figure~\ref{fig:E2}.
 \begin{figure}
   \begin{center}
\begin{tikzpicture}[scale=0.8]
\foreach \x in {5}
{
    \filldraw[fill=gray!50!white] (0,2) -- ++(0.8, 0.8) -- ++(2,0) -- ++(-0.8,-0.8) -- cycle;
    \draw [->] (1, 2.4) arc (-180:90:11pt and 7pt);
    \filldraw[thick, fill=gray!50!white] (0,2+3) -- ++(0.8, 0.8) -- ++(2,0) -- ++(-0.8,-0.8) -- cycle;
    \filldraw[thick, fill=gray!50!white] (0,2+3+2) -- ++(0.8, 0.8) -- ++(2,0) -- ++(-0.8,-0.8) -- cycle;
    \draw (1.4, 7.4) node {$I_{12}$};
    \draw [->] (1.4, 4.3)-- ++(0,-0.8);
    \draw (1.4, 3.9) node[right] {$\pi$};

    \filldraw[fill=gray!50!white] (\x,0) -- ++(2, 0) -- ++(0,2) -- ++(-2,0) -- cycle;
    \draw [->] (\x+0.5, 1) arc (-180:90:14pt);
    \filldraw[fill=gray!50!white] (\x,2) -- ++(0.8, 0.8) -- ++(2,0) -- ++(-0.8,-0.8) -- cycle;
    \draw [->] (\x+1, 2.4) arc (-180:90:11pt and 7pt);
    \draw [->, ultra thick] (\x,0)-- ++(1.2,0); \draw [ultra thick] (\x,0)-- ++(2,0);
    \draw [->, ultra thick] (\x+2.8,2.8)-- ++(-1.2,0); \draw [ultra thick] (\x+0.8,2.8)-- ++(2,0);
    \filldraw[thick, fill=gray!50!white] (\x+0.8,2+3+0.8) -- ++(2, 0) -- ++(0,2) -- ++(-2,0) -- cycle;
    \filldraw[thick, fill=gray!50!white] (\x,2+3) -- ++(2, 0) -- ++(0,2) -- ++(-2,0) -- cycle;
    \draw (\x+1, 6) node {$I_{23}$};
    \draw [->] (\x+1.4, 4.3)-- ++(0,-0.8);
    \draw (\x+1.4, 3.9) node[right] {$\pi$};

    \filldraw[fill=gray!50!white] (\x*2,0) -- ++(2, 0) -- ++(0,2) -- ++(-2,0) -- cycle;
    \draw [->] (\x*2+0.5, 1) arc (-180:90:14pt);
    \filldraw[fill=gray!50!white] (\x*2,2) -- ++(0.8, 0.8) -- ++(2,0) -- ++(-0.8,-0.8) -- cycle;
    \draw [->] (\x*2+1, 2.4) arc (-180:90:11pt and 7pt);
    \draw [->, ultra thick] (\x*2,0)-- ++(1.2,0); \draw [ultra thick] (\x*2,0)-- ++(2,0);
    \draw [->, ultra thick] (\x*2+2.8,2.8)-- ++(-1.2,0); \draw [ultra thick] (\x*2+0.8,2.8)-- ++(2,0);
    \filldraw[fill=gray!50!white] (\x*2+2,0) -- ++(0.8, 0.8) -- ++(0,2) -- ++(-0.8,-0.8) -- cycle;
    \draw [->] (\x*2+2.15, 1.4) arc (-180:90:7pt and 14pt);
    \draw [->>, ultra thick] (\x*2+2,0)-- ++(0.6,0.6); \draw [ultra thick] (\x*2+2,0)-- ++(0.8,0.8);
    \draw [->>, ultra thick] (\x*2+0.8,2.8)-- ++(-0.6,-0.6); \draw [ultra thick] (\x*2+0.8,2.8)-- ++(-0.8,-0.8);
    \draw [->>>, ultra thick] (\x*2+2.8,0.8)-- ++(0,1.2); \draw [ultra thick] (\x*2+2.8,0.8)-- ++(0,2);
    \draw [->>>, ultra thick] (\x*2,2)-- ++(0,-1.2); \draw [ultra thick] (\x*2,2)-- ++(0,-2);
    \filldraw[thick, fill=gray!50!white] (\x*2+2,5) -- ++(0.8, 0.8) -- ++(0,2) -- ++(-0.8,-0.8) -- cycle;
    \draw (\x*2+2.4, 6.2) node {$I_{13}$};
    \filldraw[thick, fill=gray!50!white] (\x*2,2+3) -- ++(0.8, 0.8) -- ++(0,2) -- ++(-0.8,-0.8) -- cycle;
    \draw [->] (\x*2+1.4, 4.3)-- ++(0,-0.8);
    \draw (\x*2+1.4, 3.9) node[right] {$\pi$};
}
\end{tikzpicture}
   \end{center}
   \caption{The decomposition of $Y \cong \RP^2$ in Example~\ref{exm:1}}
    \label{fig:E2}
 \end{figure}

\end{example}

\subsection{The transfer and the cochain complex of $Y$}
  Let $\underline{01}^\ast $, $1^\ast $, and $0^\ast $ be the
  oriented dual of $[\underline{01}]$, $[1]$, and $[0]$,
  respectively. Let $(C^\ast (\RZ_K),d)$ be the cochain complex
  dual to $(C_\ast (\RZ_K),\partial)$ (with respect to the
  $\Hom$-functor).
  We write an oriented dual cell $e^\ast $
  in the form $e^\ast =e_1^\ast \otimes \cdots \otimes e_m^\ast$,
  if $[e]=[e_1] \otimes \cdots \otimes [e_m]$.
  For disjoint subsets $\sigma$ and $\tau$ of $[m]$,
  let $u_{\sigma}t_{\tau}\in C^\ast (\RZ_K)$ be the cochain
  \begin{equation}	\label{def:ut}
	u_{\sigma}t_{\tau}=\alpha_1 \otimes \cdots \otimes \alpha_m,  \quad
	\alpha_i=\begin{cases}
	  \underline{01}_i^\ast     & i\in\sigma\\
	  1_i^\ast   & i\in\tau\\
	  1_i^\ast +0_i^\ast   &\text{otherwise}.
	\end{cases}
  \end{equation}
  When $\sigma=\tau=\emptyset$, we denote the cochain by
  the void word $\oslash$.

%Recall that
%$\tau_e^+$ (resp. $\tau_e^-$) is
% the index set of $1_i$-components (resp. $0_i$-components)
% of $e$.
%We see that $u_{\sigma}t_{\tau}$ is a sum of $2^{j}$ dual cells,
%namely
%\begin{equation}  \label{eq:defut2}
%  u_{\sigma}t_{\tau}=\sum_{\sigma=\sigma_e,\tau\subset\tau_e^+}e^\ast ,
%\end{equation}
% where
% $j=m-\card (\sigma)-\card (\tau)$. In particular,
% the void word $\oslash$ is a sum of $2^{m}$ dual vertices.

For $g=(g_1, \ldots, g_m) \in\Z_2^m$, by dualizing \eqref{eq:ref}, we have the cochain map
 $g^\ast \colon C^\ast (\RZ_K)\to C^\ast (\RZ_K)$ such that
 \begin{equation}    \label{eq:ref2}
   g^\ast (e^\ast )=g_1^\ast (e_1^\ast) \otimes \cdots \otimes g_m^\ast (e_m^\ast)=(-1)^{(\sigma_e,g)}(g \cdot e)^\ast ,
 \end{equation}
 where $g_i^\ast (\underline{01}_i^\ast )=-\underline{01}_i^\ast $, $g_i^\ast (1_i^\ast )=0_i^\ast $, and
 $g_i^\ast (0_i^\ast )=1_i^\ast $ if $g_i=1$.

 \begin{lemma}[{\cite[Section 3.2]{Cai2017}}]\label{lem:basis}
As an abelian group, $(C^\ast (\RZ_K),d)$ is generated
by
$\{u_{\sigma}t_{\tau}\}_{\sigma,\tau}$,
with $\sigma,\tau$ running through disjoint subsets of $[m]$
such that $\sigma\in K$.
The coboundary operator follows
\begin{equation}
   d (u_{\sigma}t_{\tau})=
  \sum_{i\in\tau\atop\sigma\cup\{i\}\in K}
  (-1)^{(\sigma,i)}u_{\sigma\cup\{i\}}t_{\tau\setminus\{i\}},
  \label{def:d}
\end{equation}
  where $(\sigma,i)=\card (\{j\in\sigma\mid j<i\})$.
\end{lemma}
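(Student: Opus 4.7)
The plan is to verify both assertions by a direct computation in the cubical cochain complex of the ambient cube $(D^1)^m$, and then pass to the subcomplex $\RZ_K$. The basic idea is that $u_\sigma t_\tau$ is a formal tensor product of three simple factors in each coordinate, so both spanning and the coboundary behavior reduce to single-coordinate algebra.

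For the generating claim, I would first identify $C^*(\RZ_K)$ with the $\Z$-dual of the free abelian group on cubical cells $e$ with $\sigma_e \in K$, whose canonical basis is $\{e^*\}$. To express such an $e^*$ in terms of $u_\sigma t_\tau$'s, I would write $e^* = \underline{01}^*_{\sigma_e} \otimes 1^*_{\tau_e^+} \otimes 0^*_{\tau_e^-}$ and substitute $0_i^* = (1_i^* + 0_i^*) - 1_i^*$ in each factor with $i\in\tau_e^-$; expanding gives
\[ e^* = \sum_{U\subset \tau_e^-} (-1)^{|U|}\, u_{\sigma_e} t_{\tau_e^+ \cup U}, \]
which exhibits $e^*$ as an integer combination of elements of the claimed form, all indexed by $\sigma_e\in K$. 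This inclusion–exclusion identity is the substantive content of the spanning assertion.

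For the coboundary formula, I would use the Leibniz rule for the Eilenberg--Zilber product:
\[ d(\alpha_1 \otimes \cdots \otimes \alpha_m) = \sum_{i=1}^m (-1)^{\deg\alpha_1+\cdots+\deg\alpha_{i-1}}\, \alpha_1\otimes\cdots\otimes d\alpha_i\otimes\cdots\otimes\alpha_m. \]
Dualizing $\partial[\underline{01}_i]=[1_i]-[0_i]$ gives $d(1_i^*)=\underline{01}_i^*$, $d(0_i^*)=-\underline{01}_i^*$, $d(\underline{01}_i^*)=0$, and in particular $d(1_i^*+0_i^*)=0$. Consequently only coordinates $i\in\tau$ contribute, each producing the tensor factor $\underline{01}_i^*$ in the $i$th slot; the resulting summand is $u_{\sigma\cup\{i\}}t_{\tau\setminus\{i\}}$. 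The Koszul prefactor collapses to $(-1)^{(\sigma,i)}$ since precisely the factors indexed by $j\in\sigma$ (with $j<i$) carry degree one.

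The remaining point is the restriction to $\sigma\cup\{i\}\in K$ in the sum. I would handle this by noting that $C^*(\RZ_K)$ is the quotient of $C^*((D^1)^m)$ obtained by setting $e^*=0$ whenever $\sigma_e\notin K$; equivalently $u_{\sigma'}t_{\tau'}=0$ in $C^*(\RZ_K)$ whenever $\sigma'\notin K$. Thus terms of the formal coboundary with $\sigma\cup\{i\}\notin K$ vanish in $C^*(\RZ_K)$, producing exactly the stated summation range in \eqref{def:d}. I do not anticipate a real obstacle: the only places demanding care are bookkeeping of Koszul signs and the check that $u_\sigma t_\tau$ with $\sigma\in K$ is a well-defined cochain on $\RZ_K$ (which is immediate, as every cell appearing in its expansion has $\sigma_e=\sigma\in K$).
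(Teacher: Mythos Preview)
Your proof is correct. The argument for the spanning claim is essentially the paper's induction on $\card(\tau_e^-)$ unwound into a closed inclusion--exclusion formula, so there is no substantive difference there. For the coboundary formula the routes diverge slightly: the paper proves \eqref{def:d} by pairing $d(u_\sigma t_\tau)$ against an arbitrary cell $[e']$ via \eqref{eq:pf1} and doing a case analysis on the position of the distinguished index $i$ (in $\sigma$, in $[m]\setminus(\sigma\cup\tau)$, or in $\tau$), whereas you compute directly with the Leibniz rule on the tensor factors and use that $d(1_i^*+0_i^*)=0$. Both are elementary; your computation is a bit more transparent about why only $i\in\tau$ survives, while the paper's evaluation argument keeps the restriction to $K$ in view throughout rather than invoking the quotient description of $C^*(\RZ_K)$ at the end.
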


\begin{proof}
  For the first statement, it suffices
  to show that each dual cell
  $e^\ast $ can be expressed as a $\Z$-linear
  sum by cochains of the form $u_{\sigma}t_{\tau}$.

  We use an induction on $k=\card (\tau_e^-)$.
  For the case when $k=0$, as $\sigma_e\cup\tau_e^+=[m]$, we have $e^\ast =u_{\sigma_e}t_{\tau_e^+}$.
  Assume that this is true for all $k<\ell$,
  and suppose $\card (\tau_e^-)=\ell$.
  Expanding \eqref{def:ut} as a sum of dual cells,
  we see that each summand in
  $u_{\sigma_e}t_{\tau_e^+}-e^\ast $ is in
  the form $(e'')^\ast $ so that $\card (\tau^{-}_{e''})<\ell$.
  By assumption, $u_{\sigma_e}t_{\tau_e^+}-e^\ast $ can
  be expressed as a $\Z$-linear sum as desired,
  so can $e^\ast $.
  Therefore, the first statement holds by induction.

  By dualizing \eqref{eq:bd}, we have the evaluation
  \begin{equation}
	\langle de^\ast , [e']\rangle=
  \langle e^\ast ,\partial [e']\rangle=\begin{cases}
	(-1)^{(\sigma,i)} & \text{if $\partial_i^+e'=e$,}\\
	-(-1)^{(\sigma,i)} & \text{if $\partial_i^-e'=e$,}\\
	0                  & \text{otherwise}.
  \end{cases}
	\label{eq:pf1}
  \end{equation}
 Note that
 if the evaluation is non-zero, then the label $i$ above is
 uniquely determined by $e$ and $e'$.
 If $i\in\sigma$, then $\langle d(u_{\sigma}t_{\tau}), [e']\rangle$ vanishes.
 If $i\in[m]\setminus (\sigma\cup\tau)$, then the pair
 $(\partial_i^+e')^\ast $ and
 $(\partial_i^-e')^\ast $ both appear as summands
 in $u_{\sigma}t_{\tau}$, by \eqref{eq:pf1},
 they cancel each other out in the evaluation.
 Therefore, only for $i\in\tau$, the dual cell $(\partial_i^+ e')^\ast $
 is a summand in  $u_{\sigma}t_{\tau}$, and
 \[\langle d(u_{\sigma}t_{\tau}), [e']\rangle=(-1)^{(\sigma,i)}.\]
 As both sides of \eqref{def:d} give the same evaluation
 on each cell, they coincide.
\end{proof}

The characteristic function $\lambda$ can be expressed by an $n \times m$ $\Z_2$-matrix $\Lambda$. Denote by $\row \lambda$ the row space of $\Lambda$ in $\Z_2^m$.

\begin{lemma}\label{lem:int}
  For each $\sigma\in K$, there exists a unique element
  $\omega\in \row \lambda$ such that
  \begin{equation}
	\omega \cap f(\sigma)=\sigma,
	\label{eq:cap}
  \end{equation}
  where $f(\sigma)$ is the first facet in the shelling
  that contains $\sigma$.
\end{lemma}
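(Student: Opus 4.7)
The plan is to exploit the non-singularity of $\lambda$ restricted to the facet $f(\sigma)$. Since $f(\sigma)$ is an $(n-1)$-face of $K$ with exactly $n$ vertices, the non-singularity condition tells us that the family $\{\lambda(\e_i)\}_{i\in f(\sigma)}$ is a basis of $\Z_2^n$. Equivalently, if $\Lambda$ is the $n\times m$ matrix representing $\lambda$, then the $n\times n$ submatrix $\Lambda|_{f(\sigma)}$ formed by the columns indexed by $f(\sigma)$ is invertible over $\Z_2$. In particular, $\Lambda$ has rank $n$, so $\dim_{\Z_2}(\row\lambda)=n$.

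Next I would introduce the restriction map
\[
r\colon \row\lambda \longrightarrow \Z_2^{f(\sigma)}, \qquad \omega\longmapsto \omega\cap f(\sigma),
\]
which is $\Z_2$-linear under the standard identification of $\Z_2^m$ with the power set of $[m]$. The source and target both have dimension $n$, so it suffices to check injectivity. If $\omega = \sum_{i=1}^n c_i \lambda_i$ lies in the kernel of $r$, then $\sum_i c_i(\lambda_i|_{f(\sigma)})=0$; but the rows of $\Lambda|_{f(\sigma)}$ are linearly independent (as $\Lambda|_{f(\sigma)}$ is invertible), forcing all $c_i=0$ and hence $\omega=0$. Thus $r$ is an isomorphism.

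Finally, viewing the subset $\sigma\subset f(\sigma)$ as an element of $\Z_2^{f(\sigma)}$, there exists a unique $\omega\in\row\lambda$ with $r(\omega)=\sigma$, which is precisely \eqref{eq:cap}. The argument is essentially a bookkeeping exercise; the only step that could confuse a reader is maintaining the dictionary between vectors in $\Z_2^m$ and subsets of $[m]$, so I would state that convention explicitly at the start of the proof. No genuine obstacle is anticipated.
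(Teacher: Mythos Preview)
Your proof is correct and follows essentially the same approach as the paper: both use that the submatrix $\Lambda|_{f(\sigma)}$ is invertible by non-singularity, so restriction of $\row\lambda$ to the $f(\sigma)$-coordinates is a bijection onto $\Z_2^{f(\sigma)}$. The paper simply normalizes so that $\Lambda|_{f(\sigma)}$ is the identity and writes down $\omega=\sum_{i\in\sigma}\lambda_i$ explicitly, whereas you phrase the same fact as the restriction map $r$ being a linear isomorphism; the content is identical.
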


\begin{proof}
  Without loss of generality, suppose $f(\sigma)=\{1,2,\ldots,n\}$
  and $\lambda|_{f(\sigma)}\colon\Z_2^n\to\Z_2^n$ is the identity.
  Suppose $(\lambda_1,\ldots,\lambda_n)$ are the row vectors of
  $\Lambda$.
  Set $\omega=\sum_{i\in\sigma}\lambda_i$.
  Then $\omega$ satisfies \eqref{eq:cap}, which is unique.
\end{proof}

Recall that the \emph{transfer homomorphism}
$T_\ast \colon C_\ast (Y)\to C_\ast (\RZ_K)$ is a chain map
defined by
$$
   T_\ast (\pi_\ast ([e]))=\sum_{g'\in\ker \lambda}g'_\ast ([e]),
$$
where $e$ is a cubical cell of $\RZ_K$.
It can be checked that the
definition is independent of the choice of the
pre-image $[e]$, and $T_\ast$ is invariant in the sense that
\begin{equation}
  g_\ast \circ T_\ast =T_\ast ,
  \label{eq:inv}
\end{equation}
where
$g_\ast \colon C_\ast (\RZ_K)\to C_\ast (\RZ_K)$ is induced by $g\in\ker \lambda$.
Let $T^\ast \colon C^\ast (\RZ_K)\to C^\ast (Y)$ be the dual of $T_\ast $.
By Lemma~\ref{lem:basis}, the dual chain complex
$(C^\ast (Y),d)$ of $(C_\ast (Y),\partial)$ is generated by
$\{T^\ast (u_{\sigma}t_{\tau})\}_{\sigma,\tau}$.

Recall that a cubical cell
$e$ is canonical, if $\tau_e^-\subset f(\sigma_e)$.
By Lemma~\ref{lem:g},
$C^\ast (Y)$ has a basis $\{T^\ast (e^\ast )\}_e$
with $e$ running through canonical cells of
$\RZ_K$. We would like to express
each cochain $T^\ast (u_{\sigma}t_{\tau})$ as a $\Z$-linear sum of these
basis elements.

A cochain $c$ is said to be \emph{divisible} by an integer $r$,
if there exists a cochain $c'$ so that $c=rc'$, and is said to be \emph{primitive} if it is not divisible by any integer greater than $1$. For an element $\omega \in \Z_2^m$, $K_\omega$ is the full subcomplex of $K$ induced by the subset of $[m]$ associated to $\omega$.

\begin{proposition}\label{prop:main}
 Given $\omega\in \row \lambda$
 and $\sigma\in K_{\omega}$ with
 $\card (\sigma)=k$,  the cochain
 $T^\ast (u_{\sigma}t_{\omega\setminus\sigma})\in C^k(Y)$
 is divisible by $2^{\mu_k(\omega)}$,
 where
$$
	 \mu_k(\omega)=\max\{m-n+k-\card (\omega ),0\}.
$$
   In particular, if $\omega \cap f(\sigma)=\sigma$, then
   $m-n+k-\card (\omega )=\mu_k(\omega)\geq 0$, and
  \begin{equation}
   T^\ast (u_{\sigma}t_{\omega\setminus\sigma})=
   2^{\mu_k(\omega)}T^\ast (u_{\sigma}t_{[m]\setminus f(\sigma)}),
   \label{eq:spe}
 \end{equation}
where $T^\ast (u_{\sigma}t_{[m]\setminus f(\sigma)})$ is primitive in
$C^\ast (Y)$.
 \end{proposition}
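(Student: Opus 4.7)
The plan is to evaluate $T^\ast(u_\sigma t_{\omega\setminus\sigma})$ cell by cell against the basis of $C_\ast(Y)$ consisting of images $\pi_\ast[e']$ of canonical cubical cells (Lemma~\ref{lem:g}). By the definition of the transfer together with the sign rule \eqref{eq:ref}, for each canonical $e'$ one has
\begin{equation*}
T^\ast(u_\sigma t_{\omega\setminus\sigma})(\pi_\ast[e']) \;=\; \sum_{g\in\ker\lambda}(-1)^{(\sigma_{e'},g)}\,(u_\sigma t_{\omega\setminus\sigma})\bigl([g\cdot e']\bigr).
\end{equation*}
I would then isolate the contributing terms: a summand is nonzero precisely when $\sigma_{e'}=\sigma$ and $\tau^+_{g\cdot e'}\supset\omega\setminus\sigma$, the latter condition translating (via \eqref{eq:ref}) into a prescribed value of the projection $g|_{\omega\setminus\sigma}$ determined by $e'$. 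Hence the contributing $g$'s form either $\emptyset$ or a single coset of the subgroup $H:=\ker\bigl(\ker\lambda\to\Z_2^{\omega\setminus\sigma}\bigr)$, with the second projection being coordinate restriction.

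For the divisibility claim I would observe that the image of $\ker\lambda\to\Z_2^{\omega\setminus\sigma}$ has $\Z_2$-dimension at most $|\omega|-k$, so $|H|\geq 2^{m-n-|\omega|+k}$ and in particular $|H|\geq 2^{\mu_k(\omega)}$. Over any coset $g_0+H$ the sum $\sum_{g\in g_0+H}(-1)^{\langle\sigma,g\rangle}$ equals either $0$ or $(-1)^{\langle\sigma,g_0\rangle}|H|$ depending on whether the character $\langle\sigma,\cdot\rangle$ is trivial on $H$. Either way the result is divisible by $2^{\mu_k(\omega)}$, which proves the first assertion.

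Now suppose $\omega\cap f(\sigma)=\sigma$. Non-singularity of $\lambda|_{f(\sigma)}$ makes $\ker\lambda\to\Z_2^{[m]\setminus f(\sigma)}$ an isomorphism, and since $\omega\setminus\sigma\subset[m]\setminus f(\sigma)$ the further projection $\ker\lambda\to\Z_2^{\omega\setminus\sigma}$ is surjective, giving $|H|=2^{\mu_k(\omega)}$ and $m-n+k-|\omega|\geq 0$. For a canonical $e'$ with $\sigma_{e'}=\sigma$, the condition $\tau_{e'}^-\subset f(\sigma)$ forces $(e')_i=1$ for every $i\in[m]\setminus f(\sigma)$, so the prescribed value of $g|_{\omega\setminus\sigma}$ is $0$ and the contributing coset is $H$ itself. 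The crucial step is to show that $\langle\sigma,\cdot\rangle$ is trivial on $H$: for $h\in H$ the equality $h\cap\omega=h\cap\sigma$ (using $h\cap(\omega\setminus\sigma)=\emptyset$ and $\sigma\subset\omega$) combined with $\langle\omega,h\rangle=0$ (the pairing between $\row\lambda$ and $\ker\lambda$ vanishes, since $\omega\in\row\lambda$ and $h\in\ker\lambda$) forces $\langle\sigma,h\rangle=0$. Therefore the evaluation equals $+2^{\mu_k(\omega)}$ on every canonical $e'$ with $\sigma_{e'}=\sigma$, and vanishes on the remaining basis cells. Applying the same analysis to $T^\ast(u_\sigma t_{[m]\setminus f(\sigma)})$ produces the unique contributing $g=0$ and hence evaluation $+1$ on every canonical $e'_\alpha$ with $\sigma_{e'_\alpha}=\sigma$ and $\tau_{e'_\alpha}^-=\alpha\subset f(\sigma)\setminus\sigma$. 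Thus $T^\ast(u_\sigma t_{[m]\setminus f(\sigma)})=\sum_\alpha(\pi e'_\alpha)^\ast$ is a sum of distinct basis elements with coefficient $+1$, hence primitive, and \eqref{eq:spe} follows by comparing the two evaluations. The main obstacle is this character-triviality argument: it is what ties the combinatorial assumption $\omega\cap f(\sigma)=\sigma$ to the algebraic orthogonality between $\row\lambda$ and $\ker\lambda$, and is what upgrades the general divisibility bound to the sharp identity \eqref{eq:spe}.
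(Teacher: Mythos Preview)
Your argument is correct. Both proofs hinge on the same fact---that $\row\lambda$ and $\ker\lambda$ are orthogonal under the standard pairing on $\Z_2^m$---but they organize the computation differently. The paper works upstairs in $C^\ast(\RZ_K)$: it expands $u_\sigma t_{\omega\setminus\sigma}$ over the coordinates in $\tau_2=[m]\setminus(f(\sigma)\cup\omega)$, uses the $\ker\lambda$-invariance $T^\ast\circ g^\ast=T^\ast$ to rewrite each summand, and then collapses via a projection $p\colon G_{\tau_2}\to\Z_2^{\tau_1}$ whose kernel supplies the factor $2^{\mu_k(\omega)}$. You instead work downstairs in $C^\ast(Y)$, evaluating against the canonical-cell basis and reducing everything to the character sum $\sum_{h\in H}(-1)^{(\sigma,h)}$ over $H=\ker(\ker\lambda\to\Z_2^{\omega\setminus\sigma})$. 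Your route is a bit more conceptual: the divisibility bound comes directly from the rank inequality $\dim H\ge\mu_k(\omega)$, and in the special case $\omega\cap f(\sigma)=\sigma$ the triviality of the character is exactly the orthogonality statement, with no explicit cochain manipulation needed. The paper's route, by contrast, makes the identity \eqref{eq:spe} visible already at the level of cochains in $C^\ast(\RZ_K)$ before passing to $Y$. The two arguments are dual to one another, and neither requires anything the other does not.
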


\begin{proof}
  Let $\tau_1=(\omega  \cap f(\sigma))\setminus \sigma $
  and $\tau_2=[m]\setminus (f(\sigma)\cup \omega )$.
  We see that $\omega  \cap f(\sigma)=\tau_1\cup\sigma$,
  and $\omega  \setminus f(\sigma)=[m]\setminus (f(\sigma)\cup\tau_2)$.
  As $\tau_1\cap\sigma=\emptyset$ and $f(\sigma)\cap\tau_2=\emptyset$,
  \begin{align*}
	\card (\omega)&=
	\card (\omega\cap f(\sigma))+
	\card (\omega\setminus f(\sigma))\\
	&=\card (\tau_1)+k+m-n-\card (\tau_2),
  \end{align*}
  namely
	\[ \card (\tau_2)-\card (\tau_1)
	  =m-n+k-\card (\omega)=\mu_k(\omega).\]
	Denote $k_1=\card (\tau_1)$ and $k_2=\card (\tau_2)$.
	For $\ell\in\tau_2$, let $g_\ell\in\ker \lambda$ be
	the unique element so that $\sigma_{g_\ell}\setminus f(\sigma)=\{\ell\}$,
	where $\sigma_{g_\ell}\subset [m]$ is the set of non-zero
	entries of $g_\ell$ (see Lemma~\ref{lem:g}). Let $G_{\tau_2}$ be
	the subgroup generated by $\{g_\ell\mid \ell\in\tau_2\}$, the
	order of which is $2^{k_2}$.

	For $S\subset [m]$, define $(S,g)=\card (S\cap\sigma_g)$.
	We see that $g\in\ker \lambda$, $\omega\in \row \lambda$ implies
	that $(\omega,g)$ is an even number.
	For $g\in G_{\tau_2}$, $(\omega\setminus f(\sigma),g)=0$, thus
	\begin{equation}
	 0=(\omega,g)=(\omega\cap f(\sigma),g)+(\omega\setminus f(\sigma),g)=
	 (\sigma,g)+(\tau_1,g) \quad \mathrm{mod} \ 2.
	  \label{eq:key1}
	\end{equation}
	We expand $u_{\sigma}t_{\omega\setminus\sigma}$
	with respect to $\tau_2$, as a sum
	of $2^{k_2}$ terms. By \eqref{def:ut},
	\[u_{\sigma}t_{\omega\setminus\sigma}=\sum_{g\in G_{\tau_2}}\alpha_g,
	  \quad \alpha_g=\otimes_{i=1}^m\alpha_i, \quad \alpha_i=\begin{cases}
		\underline{01}_i^\ast   & i\in\sigma \\
		0_i^\ast   & i\in\sigma_g\cap\tau_2 \\
		1_i^\ast +0_i^\ast    & i\in f(\sigma)\setminus\omega\\
		1_i^\ast     & \text{otherwise}.
	  \end{cases}\]
	  Let $p\colon G_{\tau_2}\to \Z_2^{k_1}$
	  be the projection sending $g=(g_i)_{i=1}^m$ to
	  $p(g)=(g_i)_{i\in\tau_1}$.
	  For $g\in G_{\tau_2}$, $g^\ast $ turns the $0_i^\ast $-components in $\tau_2$
	  of $\alpha_g$ into $1_i^\ast $-components, hence
	  by \eqref{eq:ref2}, we have
	  \begin{align*}
		g^\ast (\alpha_g)=(-1)^{(\sigma,g)}\alpha'_g
		\quad
		\alpha'_g=\otimes_{i=1}^m\alpha_i', \quad \alpha_i'=\begin{cases}
		\underline{01}_i^\ast   & i\in\sigma \\
		0_i^\ast   &  i\in\sigma_{p(g)} \\
		1_i^\ast +0_i^\ast    & i\in f(\sigma)\setminus\omega\\
		1_i^\ast     & \text{otherwise}.
	  \end{cases}
  \end{align*}
  Note that by \eqref{eq:key1}, $(-1)^{(\sigma,g)}\alpha'_g=
  (-1)^{\card (\sigma_{p(g)})}\alpha'_g$,
  which means that $g^\ast (\alpha_g)$ is only determined by the image
  $p(g)$,
  thus we denote
  \[\beta_{p(g)}=g^\ast (\alpha_g)=(-1)^{\card (\sigma_{p(g)})}\alpha'_g.\]
  Together with \eqref{eq:inv}, we see that
  \begin{align*}
	T^\ast (u_{\sigma}\tau_{\omega\setminus\sigma})
	&=\sum_{g\in G_{\tau_2}}T^\ast (\alpha_g)
	=\sum_{g\in G_{\tau_2}}T^\ast (g^\ast (\alpha_g))\\
	&=\sum_{g\in G_{\tau_2}}T^\ast (\beta_{p(g)})=N
	T^\ast (\sum_{g'\in \mathrm{Im}p}\beta_{g'}),
	\numberthis\label{eq:fin1}
  \end{align*}
	where $N$ is the order of the kernel of $p$, which is clearly
	divisible by $2^{k_2-k_1}=2^{\mu_{k}(\omega)}$ if $k_2 \geq k_1$.

	In the special case $\tau_1=\emptyset$, we have
	$\mu_k(\omega)=\card (\tau_2)$,
	$\sigma_{p(g)}=\emptyset$, and
	\[\beta_{p(g)}=\alpha'_g=u_{\sigma}t_{[m]\setminus f(\sigma)}\]
	for all $g\in G_{\tau_2}$;
	now \eqref{eq:spe} follows
	from \eqref{eq:fin1}, as $N=2^{k_2}$.
	Each dual cell appearing in the sum
	$u_{\sigma}t_{[m]\setminus f(\sigma)}$ is the dual
	of a canonical one, hence
	$T^\ast (u_{\sigma}t_{[m]\setminus f(\sigma)})$ is primitive.
\end{proof}

\section{The cellular (co)chain complexes of $K_\omega$}\label{sec:simp_cpx_of_K}

To describe the cohomology of $Y$ in terms of $K_\omega$ for $\omega \in \row \lambda$, we will construct an explicit cellular (co)chain complex of $K_\omega$ in this section.

With respect to $K\subset 2^{[m]}$, we denote by $(\wt{C}_\ast (K),\partial')$
the \emph{augmented (ordered) simplicial chain complex} of $K$:
$\wt{C}_\ast (K)=\bigoplus_{q\geq-1} C_p(K)$,
where $C_q(K)$ is generated by oriented simplices
$[\sigma]=[i_0,\ldots,i_q]$ ($\sigma\in K$),
$i_0<\cdots<i_q$ (formally $C_{-1}(K)$ is generated
by $[\emptyset]$),
together with the boundary operator
\begin{equation}
   \partial'[\sigma]=\begin{cases}
	\sum_{k=0}^q(-1)^{k}
	[\sigma\setminus\{i_k\}] & q>0\\
	[\emptyset] & q=0\\
	0           & q=-1.
  \end{cases}
  \label{eq:partial'}
\end{equation}
Here, the orientation follows the rule that a permutation of $i_k,i_j\in\sigma$ gives a $(-1)$-sign.
It can be shown that $(\wt{C}_\ast (K),\partial')$
is chain-homotopy equivalent to the
reduced singular chain complex of
$K$ (see \cite{Munkres1984book} for details).

\begin{definition}
  Let $K$ be a simplicial complex on $[m]$.
  For $\sigma \subset [m]$ such that $\sigma \not\in K$, the simplicial complex
  $K \cup 2^\sigma$
  is called the \emph{regular (simplicial) expansion} of $K$ along $\sigma$
if $\dim \sigma \geq 0$ and
  the set $\{\tau\subset\sigma\mid\tau\not\in K\}$
  has a unique, minimal element, which is denoted by
  $r(\sigma)$.
  We still call $r(\sigma)$ the \emph{restriction} of $\sigma$.
If $\sigma$ is a singleton, then $r(\sigma)=\sigma$. Note that $r(\sigma)\not=\emptyset$ as
  $\emptyset\in K$.
\end{definition}

Geometrically, $K'$ is a regular expansion of $K$
if and only if the intersection of geometrical realizations of $\sigma$ and $K$ is
a union of codimension-one faces of
$\sigma$.
It is easy to see that, there is a
strong deformation retraction $K'\to K$
along $\sigma$ unless
$r(\sigma)=\sigma$, i.e.,
the whole boundary of $\sigma$ is
contained in $K$.

\begin{lemma}\label{lem:rho1}
 Suppose $K'$ is a regular expansion of $K$ along
 $\sigma$ with $r(\sigma)\not=\sigma$.
 Then there is a simplicial chain map
 $\rho\colon \wt{C}_\ast (K')\to \wt{C}_\ast (K)$
 whose restriction to $C_\ast (K)$ is the identity.
\end{lemma}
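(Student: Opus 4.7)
The plan is to realise the geometric deformation retraction $|K'|\searrow|K|$ at the chain level by constructing an explicit chain homotopy. Since $r(\sigma)\neq\sigma$, I pick a vertex $w\in\sigma\setminus r(\sigma)$. Denote by $L:=\{\tau\subset\sigma\mid r(\sigma)\subset\tau\}$ the set of simplices of $K'\setminus K$, and let $L_-:=\{\tau\in L\mid w\notin\tau\}$. I then define
\[
  h([\tau])=\begin{cases}[w,\tau],& \tau\in L_-,\\ 0,& \text{otherwise},\end{cases}
\]
where $[w,\tau]$ denotes the oriented simplex obtained by prepending $w$ to the ordered vertex list of $\tau$, and set $\rho:=\mathrm{id}-\partial' h-h\partial'$. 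By construction $\rho$ is a chain map.

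The identity $\rho|_{\wt{C}_*(K)}=\mathrm{id}$ is immediate: because $r(\sigma)\notin K$, no simplex of $K$ (and hence no face of such a simplex) contains $r(\sigma)$, so $h$ vanishes on $\wt{C}_*(K)$ and on its $\partial'$-image. The substantive claim is that $\rho$ actually takes values in $\wt{C}_*(K)\subset\wt{C}_*(K')$. To verify this I would apply the cone identity $\partial'[w,\tau]=[\tau]-[w,\partial'\tau]$: for $\tau=\{i_0<\cdots<i_q\}\in L_-$, only the faces $\tau\setminus\{i_j\}$ with $i_j\notin r(\sigma)$ still lie in $L_-$, and a short telescoping gives
\[
  \rho([\tau])=\sum_{i_j\in r(\sigma)}(-1)^j\,[w,\tau\setminus\{i_j\}],
\]
while for $\tau\in L$ containing $w$ only the single face $\tau\setminus\{w\}$ lies in $L_-$, and the two signs conspire to give $\rho([\tau])=0$.

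Each oriented simplex $[w,\tau\setminus\{i_j\}]$ appearing above has underlying vertex set $(\tau\setminus\{i_j\})\cup\{w\}\subset\sigma$; since $i_j\in r(\sigma)$ is absent and $w\notin r(\sigma)$, this set does not contain $r(\sigma)$, so by the minimality of $r(\sigma)$ among non-faces of $K$ inside $2^\sigma$ it already belongs to $K$. The delicate point is pinning down the orientation signs coming from the repeated use of the cone formula so that all contributions outside the summation over $r(\sigma)$ cancel; once this bookkeeping is settled, the minimality of $r(\sigma)$ is precisely what forces the surviving terms into $\wt{C}_*(K)$, which is the crux of the lemma. The resulting $\rho$ will serve as a basic building block for the subsequent comparison of cellular (co)chain complexes of $K_\omega$ and of $Y$.
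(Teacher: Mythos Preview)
Your argument is correct, and the sign bookkeeping you flag as delicate does close cleanly: for $\tau\in L_-$ the cone identity $\partial'[w,\tau]=[\tau]-[w,\partial'\tau]$ together with $h\partial'[\tau]=\sum_{i_j\notin r(\sigma)}(-1)^j[w,\tau\setminus\{i_j\}]$ leaves precisely your sum over $i_j\in r(\sigma)$, and for $\tau\in L$ with $w\in\tau$ the single surviving face contributes $(-1)^{k_0}[w,\tau\setminus\{w\}]=(-1)^{2k_0}[\tau]$, forcing $\rho([\tau])=0$.

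This is, however, a genuinely different route from the paper's. The paper argues non-constructively: it takes any simplicial approximation of the geometric retraction $|K'|\to|K|$ (the identity on $K$) and lets the induced simplicial chain map be $\rho$, with the standard convention that a simplex sent to a degenerate image maps to zero. Your construction is instead purely algebraic, building $\rho=\mathrm{id}-\partial'h-h\partial'$ from a cone operator with apex $w\in\sigma\setminus r(\sigma)$. What you gain is an explicit formula and, for free, a chain homotopy from $\rho$ to the identity---potentially handy in the inductive composition along a regular expanding sequence that follows. What the paper's approach gains is brevity and a transparent geometric picture. Note also that when $\card(r(\sigma))>1$ your $\rho([\tau])$ is a genuine $\Z$-linear combination of several simplices, hence is \emph{not} induced by any simplicial map; so the two chain maps really differ, though either suffices for the lemma.
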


\begin{proof}
 We can choose a simplicial approximation of the retraction
 $ K' \to K$, which is the identity on
 $K$.
 This can be done since $\sigma$ is a simplex.
 To preserve the degree,
  if in the approximation a simplex is mapped (geometrically) to
 another one whose dimension is strictly lower, then
 we set its image to be zero in the target chain group. It
 can be checked that in this way we
 obtain a chain map (cf.\
 \cite[pp.~62--63]{Munkres1984book}).
 Then the induced chain map
 is as desired.
\end{proof}

\begin{definition}
  We say that $\sigma_1,\sigma_2,\ldots,\sigma_\ell$ is a
  \emph{regular expanding sequence} of $K$, if
   $K$ admits a filtration
   $2^{\sigma_1}=K_1\subset K_2\subset\cdots\subset K_\ell=K$
   such that $K_{j+1}$ is
  a regular expansion of $K_j$ along $\sigma_{j+1}$ for all $j=1,\ldots,s-1$,
  where $2^{\sigma_1}$ denotes all subsets of $\sigma_1\not=\emptyset$.
  In the sequence above, a simplex $\sigma_{i}$
  is called \emph{critical}
  if $r(\sigma_j)=\sigma_j$.
  We shall denote by $\Cri(K)$ the set of critical faces
  in the regular expanding sequence.
  Formally, we set $r(\sigma_1)=\emptyset$.
\end{definition}

We will see that, up to homotopy, $K$
has a cell decomposition by critical
simplices. If $\Cri (K)=\emptyset$, $K$ deformation
retracts onto a vertex.
Here is the key lemma for our main theorem.

\begin{lemma}\label{prop:key2}
  Suppose $\sigma_1,\ldots,\sigma_\ell$ is a regular expanding sequence  of $K$
  with the filtration $K_1\subset\cdots\subset K_\ell$.
  Let $\ol{C}_\ast (K)$ be the free abelian group generated
  by $\Cri (K)$, with each critical simplex suitably
  oriented (which is trivial if $\Cri (K)=\emptyset$), and
  let $(\wt{C}_\ast (K),\partial')$ be the simplicial chain complex.
  Then $\ol{C}_\ast (K)$ admits a boundary operator $\ol{\partial}'$ to
  be a chain complex,
  together with a (graded) chain map
  $\rho\colon \wt{C}_\ast (K)\to \ol{C}_\ast (K)$, with the following properties:
  \begin{enumerate}
	\item [(a)] $\rho$ is an identity when restricted to $\Cri (K)$,
	  \[\rho([\sigma])=[\sigma],\quad \sigma\in \Cri (K);\]
	\item [(b)] $\rho$ induces an isomorphism in homology;
	\item [(c)] let
	  $\rho^\ast $ be the dual of $\rho$ (by the $\Hom $-functor),
	  if $\sigma_{j+1}$ is critical, then
	  \begin{equation}
		\rho^\ast (\sigma_{j+1}^\ast )=\sigma_{j+1}^\ast +c_{j+1}^+, \label{eq:>j+1}
	  \end{equation}
	  where $\sigma_{j+1}^\ast $ is the oriented dual of $[\sigma_{j+1}]$, and
	  $c_{j+1}^+$ is a $\Z$-linear sum with dual simplices of
	  the form $\sigma_t^\ast $ involved for $t>j+1$.
  \end{enumerate}
\end{lemma}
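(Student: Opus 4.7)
The plan is to construct $(\overline{C}_*(K_j),\overline{\partial}'_j)$ and $\rho_j\colon \widetilde{C}_*(K_j)\to\overline{C}_*(K_j)$ by induction on $j$, insisting that $\rho_{j+1}$ restricts to $\rho_j$ on $\widetilde{C}_*(K_j)$ at every stage. The base case $K_1=2^{\sigma_1}$ has $\Cri(K_1)=\emptyset$ (since the convention $r(\sigma_1)=\emptyset\ne\sigma_1$ makes $\sigma_1$ non-critical), so $\overline{C}_*(K_1)=0$, $\overline{\partial}'_1=0$, and $\rho_1=0$; properties (a)--(c) then hold vacuously, matching the fact that $K_1$ is contractible.

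For the inductive step I split according to whether $\sigma_{j+1}$ is critical. In the critical case, $\partial\sigma_{j+1}\subset K_j$ and the only new simplex is $\sigma_{j+1}$ itself; I enlarge
\[
  \overline{C}_*(K_{j+1})=\overline{C}_*(K_j)\oplus\mathbb{Z}\langle[\sigma_{j+1}]\rangle,\qquad \overline{\partial}'[\sigma_{j+1}]=\rho_j(\partial'[\sigma_{j+1}]),
\]
and extend $\rho_j$ by sending $[\sigma_{j+1}]$ to itself. Both $\overline{\partial}'^{\,2}=0$ and the chain-map property of $\rho_{j+1}$ are immediate from $(\partial')^2=0$ and the inductive chain-map property of $\rho_j$. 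In the non-critical case, $K_{j+1}$ strongly deformation retracts to $K_j$ along $\sigma_{j+1}$, so Lemma \ref{lem:rho1} furnishes a chain map $\rho'_{j+1}\colon\widetilde{C}_*(K_{j+1})\to\widetilde{C}_*(K_j)$ that is the identity on $\widetilde{C}_*(K_j)$; I then set $\overline{C}_*(K_{j+1})=\overline{C}_*(K_j)$, $\overline{\partial}'_{j+1}=\overline{\partial}'_j$, and $\rho_{j+1}=\rho_j\circ\rho'_{j+1}$.

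Properties (a) and (c) follow from the compatibility $\rho_{j+1}|_{\widetilde{C}_*(K_j)}=\rho_j$: each critical $[\sigma_s]$ is fixed at the stage it is introduced and never altered thereafter, proving (a); and for (c), any labeled $\sigma_t$ with $t\leq j$ sits in $K_j$, so $\rho([\sigma_t])=\rho_j([\sigma_t])\in\overline{C}_*(K_j)$ has zero $[\sigma_{j+1}]$-component, yielding $\langle\rho^*(\sigma_{j+1}^*),[\sigma_t]\rangle=0$ for $t\leq j$ and $=1$ for $t=j+1$, which is exactly the triangular form claimed. For (b), I would apply the five lemma to the short exact sequences of pairs $0\to \widetilde{C}_*(K_j)\to\widetilde{C}_*(K_{j+1})\to \widetilde{C}_*(K_{j+1},K_j)\to 0$ and its $\overline{C}$-analogue: in the critical case both relative complexes are $\mathbb{Z}$ in a single degree with $\rho_{j+1}$ inducing the identity on them; in the non-critical case the $\overline{C}$-quotient vanishes and the $\widetilde{C}$-quotient has vanishing reduced homology by the deformation retraction.

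The main technical hurdle I foresee is the non-critical case, which hinges entirely on Lemma \ref{lem:rho1}: one needs a simplicial approximation of the strong deformation retract that is compatible with the convention of sending dimension-dropping image simplices to zero, and one must confirm that the resulting map really is a chain map and really induces the identity on $\widetilde{C}_*(K_j)$. Once that chain map is in hand, the inductive bookkeeping and the five-lemma argument for (b) should proceed routinely.
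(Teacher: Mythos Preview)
Your construction is essentially identical to the paper's: the same inductive definition of $\rho_{j+1}$ (as $\rho_j\circ f_{j+1}$ in the non-critical case via Lemma~\ref{lem:rho1}, and by $[\sigma_{j+1}]\mapsto[\sigma_{j+1}]$ with $\ol\partial'[\sigma_{j+1}]:=\rho_j(\partial'[\sigma_{j+1}])$ in the critical case), and the same derivation of (a) and (c) from the compatibility $\rho_{j+1}|_{\wt C_*(K_j)}=\rho_j$. Your five-lemma argument for (b) is a clean elaboration of what the paper leaves as ``it can be checked''; the only cosmetic point is that in your justification of (c) the vanishing should be stated for every simplex $\tau\in K_j$, not merely for the labeled $\sigma_t$ with $t\le j$, but your compatibility observation already gives exactly that.
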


\begin{proof}
    We construct $\rho$ inductively from
	$K_j$ to $K_{j+1}$, $j=1,\ldots,l-1$. First consider $K_1=2^{\sigma_1}$.
  As $\Cri (K_1)$ is empty, by definition, we
  define $\rho([\sigma])=0$ for all $\sigma\subset\sigma_1$ (including
  the case $\sigma=\emptyset$). Suppose the chain map
  $\rho_j\colon \wt{C}_\ast (K_j)\to \ol{C}_\ast (K_j)$
  has been defined as desired.
  \begin{enumerate}
	\item [(I)] If $\sigma_{j+1}$ is not critical, let
	  $f_{j+1}\colon \wt{C}_\ast (K_{j+1})\to \wt{C}_\ast (K_j)$ be the map given in Lemma~\ref{lem:rho1},
  and define $\rho_{j+1}$ as the composition $\rho_j\circ f_{j+1}$;
\item [(II)] Otherwise, if $\sigma_{j+1}$ is critical, we define
  \[\ol{\partial}'=\rho_j(\partial'[\sigma_{j+1}])\]
  and $\rho_{j+1}([\sigma_{j+1}])=[\sigma_{j+1}]$, preserving the orientation:
  $\rho_{j+1}$ coincides with
  $\rho_j$ on $\wt{C}_\ast (K_j)$.
  \end{enumerate}
   By induction, it can be checked that
   the chain map $\rho\colon \wt{C}_\ast (K)\to \ol{C}_\ast (K)$ is well-defined,
   which satisfies both (a) and (b). Property (c) follows from (I).
\end{proof}

Note that in a regular expanding sequence of $K$,
compared with a shelling,
each $\sigma_j$ is not necessarily a facet and $K$ is not necessarily
pure. We have the following relation between them.

\begin{proposition}\label{prop:esKw}
  Suppose $\sigma_1,\ldots,\sigma_s$ is a shelling of $K\subset 2^{[m]}$.
  Given $\omega\subset [m]$, let
  $\sigma_{j_1}\cap\omega, \ldots, \sigma_{j_\ell}\cap\omega$ be
  the (non-repeating)
  list of non-empty simplices in $\{\sigma_j\cap\omega\}_{j=1}^s$.
  Then $\sigma_{j_1}\cap\omega,\ldots,\sigma_{j_\ell}\cap\omega$
  is a regular expanding sequence
  of the full subcomplex $K_{\omega}$, with their restrictions
  $r(\sigma_{j_1}),\ldots,r(\sigma_{j_\ell})$, respectively,
  where $r(\sigma_{j_k})$
  is the restriction of $\sigma_{j_k}$ in the given shelling for $k=1,\ldots,\ell$.
\end{proposition}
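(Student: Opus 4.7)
The plan is to derive the regular expanding sequence of $K_\omega$ from the standard disjoint interval decomposition of a shellable complex. Recall that any shelling $\sigma_1,\ldots,\sigma_s$ of $K$ with restrictions $r(\sigma_1),\ldots,r(\sigma_s)$ induces the partition
\[
    K \;=\; \bigsqcup_{j=1}^{s} [r(\sigma_j),\sigma_j],\qquad [A,B]:=\{\rho\subseteq[m]:A\subseteq\rho\subseteq B\},
\]
where $[r(\sigma_j),\sigma_j]=K_j\setminus K_{j-1}$ and the unique $j$ with $\tau\in[r(\sigma_j),\sigma_j]$ is $j=f(\tau)$, the first facet containing $\tau$.

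First I would intersect this decomposition with $2^\omega$. A face $\tau\in K_\omega$ lying in $[r(\sigma_j),\sigma_j]$ satisfies $\tau\subseteq\omega$ iff $\tau\in[r(\sigma_j),\sigma_j\cap\omega]$, and this latter interval is non-empty iff $r(\sigma_j)\subseteq\omega$. Consequently
\[
    K_\omega \;=\; \bigsqcup_{\,j:\,r(\sigma_j)\subseteq\omega\,} [\,r(\sigma_j),\,\sigma_j\cap\omega\,].
\]
Letting $j_1<\cdots<j_\ell$ list those indices $j$ with $r(\sigma_j)\subseteq\omega$ and $\sigma_j\cap\omega\neq\emptyset$, put $A_k:=\sigma_{j_k}\cap\omega$ and $L_k:=\bigcup_{i\leq k}2^{A_i}$. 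The displayed decomposition identifies $L_k\setminus L_{k-1}$ with the interval $[r(\sigma_{j_k}),A_k]$, whose unique minimum element is $r(\sigma_{j_k})\subseteq A_k$. Hence each step $L_{k-1}\subset L_k$ is a genuine regular expansion along $A_k$ with restriction $r(\sigma_{j_k})$, yielding the regular expanding sequence of $K_\omega$ with the claimed restrictions.

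The remaining step -- which I expect to be the main bookkeeping point -- is to check that the index set $\{j_1,\ldots,j_\ell\}$ coincides with the first-occurrence indices of the distinct non-empty values of $\{\sigma_j\cap\omega\}_{j=1}^s$ appearing in the statement. Pairwise distinctness of the $A_k$ follows from disjointness of the interval decomposition: if $A_k=A_{k'}$ for $k\neq k'$, the common top would lie in two different intervals of $K_\omega$, a contradiction. For the converse, suppose $j$ satisfies $r(\sigma_j)\not\subseteq\omega$; pick any $v\in r(\sigma_j)\setminus\omega$. The minimality property of $r(\sigma_j)$ in the shelling yields some $i<j$ with $\sigma_j\setminus\{v\}\subseteq\sigma_i$, giving $\sigma_j\cap\omega\subseteq\sigma_i\cap\omega$. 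Iterating this descent one eventually reaches an index with $r(\sigma_i)\subseteq\omega$ (at worst at $i=1$, since $r(\sigma_1)=\emptyset$), so $\sigma_j\cap\omega$ is already a face of some $A_{k'}$ with $j_{k'}<j$ and contributes nothing new at stage $j$. Thus the stages $j_1,\ldots,j_\ell$ are exactly those at which a new face is added to the filtration, matching the description in the proposition and finishing the proof.
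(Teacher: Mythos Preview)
Your core argument is correct and is essentially the paper's proof recast in the language of the interval partition $K=\bigsqcup_j[r(\sigma_j),\sigma_j]$: both hinge on the dichotomy $r(\sigma_{j+1})\subset\omega$ (regular expansion of $K_j|_\omega$ along $\sigma_{j+1}\cap\omega$, with restriction $r(\sigma_{j+1})$) versus $r(\sigma_{j+1})\not\subset\omega$ (no new faces at this stage). The paper argues this directly by induction on $j$; you obtain it from the restricted partition $K_\omega=\bigsqcup_{r(\sigma_j)\subset\omega}[r(\sigma_j),\sigma_j\cap\omega]$ and the identification $L_k=K_{j_k}|_\omega$. These are the same idea in different packaging.

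There is, however, a genuine slip in your final paragraph. You set out to show that your index set $\{j:r(\sigma_j)\subset\omega,\ \sigma_j\cap\omega\neq\emptyset\}$ coincides with the \emph{first-occurrence} indices of the distinct non-empty values among $\{\sigma_j\cap\omega\}_j$, but your descent argument only yields $\sigma_j\cap\omega\subseteq\sigma_i\cap\omega$ for some earlier $i$, not equality. The two index sets can in fact differ: for the boundary of a triangle with shelling $12,\,23,\,13$ (restrictions $\emptyset,\,3,\,13$) and $\omega=\{1,2\}$, the three intersections $12,\,2,\,1$ are pairwise distinct, yet only $j=1$ satisfies $r(\sigma_j)\subset\omega$. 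What you actually prove---and what suffices, and what the paper proves---is that the indices with $r(\sigma_j)\subset\omega$ are exactly those at which $K_j|_\omega$ strictly grows. The phrase ``non-repeating list'' in the statement should be read in that sense; the paper's own proof does not attempt the literal first-occurrence matching either.
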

\begin{proof}
  Recall that, by definition,
  the restriction $r(\sigma_{j+1})$ of $\sigma_{j+1}$ in the
  shelling is the minimal element in the set
  $\{\sigma\subset \sigma_{j+1}\mid\sigma\not\in K_{j}\}$.
  Suppose the intersection $\sigma_{j+1}\cap\omega$,
  as a simplex in the full subcomplex $K_{j+1}|_{\omega}$ of $K_{j+1}$,
  is not empty.
  If $r(\sigma_{j+1})\subset \omega$, then,
  by definition, $K_{j+1}|_{\omega}$ is a regular expansion of $K_j|_{\omega}$
  along $\sigma_{j+1}\cap\omega$, whose restriction is again $r(\sigma_{j+1})$;
  otherwise, $r(\sigma_{j+1})\cap\omega$ is a proper subset of $r(\sigma_{j+1})$,
  which means that any simplex $\sigma\in K_{j+1}|_{\omega}$ cannot contain
  $r(\sigma_{j+1})$, thus, by minimality, $\sigma\in K_j$, hence $\sigma\in K_j|_{\omega}$:
  $K_{j+1}|_{\omega}=K_j|_{\omega}$.
  The statement follows by induction.
\end{proof}

Let $(\ol{C}^\ast (K),\ol{d}')$ and $(\wt{C}^\ast (K),d')$ be the dual of the
chain complex of $(\ol{C}_\ast ,\ol{\partial}')$ and $(\wt{C}_\ast (K),\partial')$, respectively, and let
$\rho^\ast \colon\ol{C}^\ast (K)\to \wt{C}^\ast (K)$ be the
dual of $\rho$.

Let $\sigma^\ast \in \wt{C}^\ast (K)$ be the oriented dual of
$[\sigma]\in \wt{C}_\ast (K)$
(as $K$ is finite, $C^\ast (K)$ is
generated by dual simplices).
It can be checked that, by dualizing \eqref{eq:partial'}, we have
\begin{equation}
   d'\sigma^\ast =\sum_{(\sigma\cup\{i\})\in K}
(-1)^{(\sigma,i)}(\sigma\cup\{i\})^\ast \quad
(d'\emptyset^\ast =\sum_{\{i\}\in K}i^\ast ),
  \label{eq:d'}
\end{equation}
where $(\sigma,i)=\card (\{j\in\sigma\mid j<i\})$.

Fix $\omega\subset [m]$ and
consider the map
\begin{equation}
  \varphi\colon \wt{C}^\ast (K_{\omega})\to C^{\ast +1}(\RZ_K)
  \label{def:vphi}
\end{equation}
that sends $\sigma^\ast $ to
$u_{\sigma}t_{\omega\setminus\sigma}$.
A comparison of \eqref{eq:d'} and \eqref{def:d} shows that
$\varphi$ is a (degree-increasing) cochain
map with respect to differentials
$d'$ and $d$.

Now suppose $\omega\in \row \lambda$ and let $\sigma_1,\ldots,\sigma_s$ be a shelling of $K$ with
restrictions $r(\sigma_1),\ldots,r(\sigma_s)$. Then, by Proposition~\ref{prop:esKw}, we have a regular expanding sequence of $K_\omega$ as
$$
\sigma_{j_1}\cap\omega, \ldots,\sigma_{j_\ell}\cap\omega.
$$
The following lemma is straightforward.

\begin{lemma}\label{lem:int2}
  The simplex $\sigma_{j_k}\cap\omega$, $k=1,\ldots,l$,
  is critical if and only if
  $\sigma_{j_k}\cap\omega=r(\sigma_{j_k})$.
   That is,
  the cochain complex $\ol{C}^\ast (K_{\omega},\ol{d}')$ is
  generated by $\{\sigma_j\mid\sigma_j\cap\omega=r(\sigma_j)\}$.
\end{lemma}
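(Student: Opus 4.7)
The plan is to observe that Lemma~\ref{lem:int2} is essentially a bookkeeping corollary of Proposition~\ref{prop:esKw} together with the definition of a critical simplex. Nothing new needs to be built; I would only assemble two pieces already available and then dualize.

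First I would recall that by Proposition~\ref{prop:esKw}, the list $\sigma_{j_1}\cap\omega,\ldots,\sigma_{j_\ell}\cap\omega$ is a regular expanding sequence of the full subcomplex $K_\omega$, and, crucially, the restriction of the step-$(k)$ simplex $\sigma_{j_k}\cap\omega$ in this sequence equals $r(\sigma_{j_k})$, the restriction of $\sigma_{j_k}$ in the ambient shelling of $K$. By the very definition of \emph{critical} (the simplex in a regular expanding sequence whose restriction equals itself), $\sigma_{j_k}\cap\omega$ is critical in the regular expanding sequence of $K_\omega$ if and only if $\sigma_{j_k}\cap\omega=r(\sigma_{j_k})$. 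This proves the first (``iff'') assertion.

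For the second assertion, I would apply Lemma~\ref{prop:key2} to $K_\omega$ with the regular expanding sequence provided by Proposition~\ref{prop:esKw}: the chain complex $\ol{C}_\ast(K_\omega)$ is, by construction, the free abelian group on the critical simplices of that sequence. Translating the ``iff'' just established, the set of critical simplices is precisely
\[
    \{\sigma_{j_k}\cap\omega\mid k=1,\ldots,\ell,\ \sigma_{j_k}\cap\omega=r(\sigma_{j_k})\}
    =\{r(\sigma_j)\mid \sigma_j\cap\omega=r(\sigma_j)\}.
\]
Dualizing via $\Hom(-,\Z)$ then shows $\ol{C}^\ast(K_\omega)$ is generated by the corresponding dual simplices, indexed exactly by $\{\sigma_j\mid\sigma_j\cap\omega=r(\sigma_j)\}$, which is the second claim.

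Since each step is a direct invocation of an already-proved proposition, I do not expect a genuine obstacle. The only mildly delicate point is making sure the indexing is stated cleanly: one should note that if $\sigma_{j_k}\cap\omega=r(\sigma_{j_k})$ then in particular $r(\sigma_{j_k})\subset\omega$, so $r(\sigma_{j_k})$ is automatically a simplex of $K_\omega$, and conversely different indices $j_k$ yield distinct critical simplices because the sequence $\sigma_{j_1}\cap\omega,\ldots,\sigma_{j_\ell}\cap\omega$ was chosen to be non-repeating. This justifies identifying the critical faces of $K_\omega$ with the indicated subset of shelling indices without ambiguity.
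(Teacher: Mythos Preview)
Your argument is correct and is exactly the intended one: the paper itself omits the proof, noting only that the lemma is straightforward, and your unpacking via Proposition~\ref{prop:esKw} (which identifies the restriction of $\sigma_{j_k}\cap\omega$ with $r(\sigma_{j_k})$) together with the definition of a critical simplex and Lemma~\ref{prop:key2} is precisely what ``straightforward'' means here.
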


\section{Main Theorem and proof of Theorem~\ref{thm:integral_cohom_of_Y}}\label{sec:5}
Let $K$ be an $(n-1)$-dimensional pure simplicial complex on $[m]$ with a shelling $\sigma_1,\ldots, \sigma_s$, and $r(\sigma_j)$ the restriction of $\sigma_j$ in the shelling for $j=1, \ldots, s$.
Let $\lambda \colon \Z_2^m \to \Z_2^n$ be a characteristic function satisfying the non-singularity condition.
We write $Y = M^\R(K,\lambda)$ for the real toric space associated to $K$ and $\lambda$.
%
%We recall all notions from Section~\ref{sec:simp_cpx_of_K}.

For each non-negative integer $k \geq 0$, let $\phi_k|_{\omega}$ be the map given by
\[ \phi_k|_{\omega}=\frac{1}{2^{\mu_k(\omega)}}T^\ast \circ\varphi\circ\rho^\ast  \colon
  \ol{C}^{k-1}(K_{\omega}) \to C^k(Y),\]
 where $T^\ast $ is the dual transfer homomorphism and $\mu_k(\omega)=m-n+k-\card (\omega)$.
The coefficient $1/{2^{\mu_k(\omega)}}$
 is multiplied to make the image  primitive (see Proposition~\ref{prop:main}).

 \begin{lemma} \label{lem:phi_is_cochain_map}
   Let $\phi|_{\omega}\colon\ol{C}^\ast (K_{\omega}) \to C^{\ast +1}(Y)$ be
   the homomorphism whose restriction to $\ol{C}^{k-1}(K_{\omega})$ is
 $\phi_k|_{\omega}$ for $k=1,\ldots,n$.
 If we endow $\ol{C}^\ast (K_{\omega})$
 with the coboundary operator $2\ol{d}'$ rather than $\ol{d}'$, then it is a cochain map
 \[(\ol{C}^\ast (K_{\omega}),2\ol{d}') \to (C^{\ast +1}(Y),d),\]
where the coboundary operator
	  $2\ol{d}'$ means $(2\ol{d}')c=2(\ol{d}'c)$ for $c\in \ol{C}^\ast (K_\omega)$.
 \end{lemma}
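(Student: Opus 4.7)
The plan is to verify $d\circ\phi|_\omega = \phi|_\omega\circ(2\ol{d}')$ by expanding $\phi_k|_\omega = \frac{1}{2^{\mu_k(\omega)}}T^\ast\circ\varphi\circ\rho^\ast$ into its three factors and tracking how the scaling changes when $k$ increases by one; the doubling of the differential on the source is precisely compensated by the shift in the exponent $\mu_k(\omega)$.

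First, I would observe that the unscaled composition $T^\ast\circ\varphi\circ\rho^\ast$ is already a cochain map with respect to $\ol{d}'$ and $d$. The map $\rho$ is a graded chain map by Lemma~\ref{prop:key2}, so its dual satisfies $\rho^\ast\circ\ol{d}' = d'\circ\rho^\ast$; the map $\varphi$ is a cochain map by the direct comparison of \eqref{eq:d'} with \eqref{def:d} recorded just after \eqref{def:vphi}, so $\varphi\circ d' = d\circ\varphi$; and $T^\ast$ is dual to the transfer chain map, hence $T^\ast\circ d = d\circ T^\ast$. Composing,
\[ d\circ(T^\ast\varphi\rho^\ast) = (T^\ast\varphi\rho^\ast)\circ\ol{d}'. \]

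Next I would check that division by $2^{\mu_k(\omega)}$ yields an integral cochain at each level. For a critical $\sigma\in K_\omega$ with $\card(\sigma)=k$, the graded map $\rho^\ast$ sends $\sigma^\ast$ to a $\Z$-linear combination of $\sigma_t^\ast$ with $\card(\sigma_t)=k$; by the first clause of Proposition~\ref{prop:main}, each $T^\ast(u_{\sigma_t}t_{\omega\setminus\sigma_t})$ is divisible by $2^{\mu_k(\omega)}$ (a quantity depending only on $k$ and $\omega$). Hence $\phi_k|_\omega(\sigma^\ast)\in C^k(Y)$, and the same reasoning applies at level $k+1$.

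The crux is the identity $\mu_{k+1}(\omega) = \mu_k(\omega)+1$ for the dimensions in play. By Lemma~\ref{lem:int2}, a critical simplex $\sigma$ of $K_\omega$ satisfies $\sigma = r(\sigma_{j_k})$, and Lemma~\ref{lem:int} translates this into $\omega\cap f(\sigma)=\sigma$. Hence $\omega\setminus f(\sigma) = \omega\setminus\sigma$ sits inside $[m]\setminus f(\sigma)$, of cardinality $m-n$, so $\card(\omega)-k\leq m-n$, i.e.\ $m-n+k-\card(\omega)\geq 0$; this gives $\mu_k(\omega) = m-n+k-\card(\omega)$ and therefore $\mu_{k+1}(\omega) = \mu_k(\omega)+1$. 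Putting everything together, for $c\in\ol{C}^{k-1}(K_\omega)$,
\[ d(\phi_k|_\omega(c)) = \frac{1}{2^{\mu_k(\omega)}}T^\ast\varphi\rho^\ast(\ol{d}'c) = \frac{2}{2^{\mu_{k+1}(\omega)}}T^\ast\varphi\rho^\ast(\ol{d}'c) = \phi_{k+1}|_\omega(2\ol{d}'c), \]
as required. The only real obstacle is locating the origin of the factor $2$ in the target differential; it arises precisely from the shift $\mu_{k+1}(\omega)-\mu_k(\omega)=1$, which in turn depends on criticality forcing the ``generic'' case $\omega\cap f(\sigma)=\sigma$ of Proposition~\ref{prop:main}. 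Once this point is isolated, the rest of the verification is a routine naturality check for the duals of chain maps already established in the paper.
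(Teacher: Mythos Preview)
Your proof is correct and follows essentially the same route as the paper: both argue that $T^\ast\circ\varphi\circ\rho^\ast$ is a cochain map with respect to $\ol{d}'$, then obtain the factor of $2$ from the exponent shift $\mu_{k+1}(\omega)=\mu_k(\omega)+1$. Your version is more explicit than the paper's in two respects---you spell out why the map lands in integral cochains (via Proposition~\ref{prop:main}) and you justify the exponent shift through the criticality condition $\omega\cap f(\sigma)=\sigma$---whereas the paper simply writes $\mu_k(\omega)=m-n+k-\card(\omega)$ in the definition preceding the lemma and lets the arithmetic speak for itself.
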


 \begin{proof}
   As a composition of cochain maps, $T^\ast \circ\varphi\circ\rho^\ast $ is a cochain map
   with respect to $\ol{d}'$. Choose $c\in \ol{C}^{k-1}(K_{\omega})$, then we
  can see that
   \begin{align*}
	 \phi(2\ol{d}'c)&=\frac{1}{2^{\mu_{k+1}(\omega)}}T^\ast \circ\varphi\circ\rho^\ast (2\ol{d}'c)\\
	 &=\frac{2}{2^{\mu_{k+1}(\omega)}}d\circ T^\ast \circ\varphi\circ\rho^\ast (c)\\
	 &=d\left(\frac{1}{2^{\mu_k(\omega)}}
	 T^\ast \circ\varphi\circ\rho^\ast (c)\right)
	 =d\phi(c).
   \end{align*}
 \end{proof}

We put
$$\ol{C}^{\ast }_{\lambda}=
 \bigoplus_{\omega\in \row \lambda}\ol{C}^{\ast }(K_{\omega}),
$$
 and let
 $\phi\colon (\ol{C}^{\ast }_{\lambda},2\ol{d}')\to (C^{\ast +1}(Y),d)$
 be the map whose restriction to $\ol{C}^{\ast }(K_{\omega})$ is $\phi|_{\omega}$.
By Lemma~\ref{lem:phi_is_cochain_map}, $\phi$ is indeed a cochain map increasing the degrees by one.
The following theorem is the key theorem of this paper.

\begin{theorem}\label{thm:main}
The cochain map $\phi\colon (\ol{C}^\ast _{\lambda},2\ol{d}')	  \to (C^{*+1}(Y),d)$ satisfies the following properties.
	  \begin{enumerate}
		\item [(a)]
		The cochain complex $\ol{C}_{\lambda}^\ast $ is generated by
$\{r(\sigma_{j})\in K_{\omega_j}\}_{j=1}^s$,
where $\omega_j\in \row \lambda$ is the unique element
that satisfies $\omega_j\cap\sigma_j=r(\sigma_i)$;
we have
 \begin{equation}
   [\phi(r(\sigma_j))]=[x_{r(\sigma_j)}]
   \label{eq:d1}
 \end{equation}
 in $H^\ast (Y;\Z_2)$, where $[\phi(r(\sigma_j))]$ denotes
 the $\mathrm{mod}$ $2$ reduction of $\phi(r(\sigma_j))$.
		\item [(b)] The map $\phi$ induces an isomorphism of
		  cohomology groups.
	  \end{enumerate}
	\end{theorem}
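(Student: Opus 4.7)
The plan is to establish (a) via direct computation and then prove (b) by an induction along the shelling filtration using the five lemma.

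For (a), the identification of the generating set is immediate by combining Lemma~\ref{lem:int} (each $j$ uniquely determines $\omega_j\in\row\lambda$ with $\omega_j\cap\sigma_j=r(\sigma_j)$) with Lemma~\ref{lem:int2} (critical faces in $K_\omega$'s regular expanding sequence are exactly those $r(\sigma_t)$ with $\sigma_t\cap\omega=r(\sigma_t)$): every $r(\sigma_j)$ appears as a critical simplex in exactly one $K_{\omega_j}$. To verify $[\phi(r(\sigma_j))]=[x_{r(\sigma_j)}]$ in $H^\ast(Y;\Z_2)$, I would unpack $\phi(r(\sigma_j)) = 2^{-\mu_{k_j}(\omega_j)} T^\ast\varphi\rho^\ast(r(\sigma_j)^\ast)$. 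Lemma~\ref{prop:key2}(c) gives $\rho^\ast(r(\sigma_j)^\ast)=r(\sigma_j)^\ast+c_j^+$, where $c_j^+$ involves duals of simplices appearing later in the expanding sequence; the leading piece maps under $\varphi$ to $u_{r(\sigma_j)}t_{\omega_j\setminus r(\sigma_j)}$, and Proposition~\ref{prop:main}\eqref{eq:spe} converts its normalised transfer to the primitive cochain $T^\ast(u_{r(\sigma_j)}t_{[m]\setminus f(\sigma_j)})$, which represents $x_{r(\sigma_j)}$ mod~$2$ under the Davis--Januszkiewicz identification of Proposition~\ref{prop:DJ}. The contributions from $c_j^+$ affect only $x_{r(\sigma_t)}$ with $t>j$ in the Stanley--Reisner basis, as can be extracted by iterating Proposition~\ref{prop:DJSta} along the filtration.

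For (b), filter $\ol{C}_\ast$ by the sub-chain complexes $F_j = \bigoplus_\omega \ol{C}_\ast((K_j)_\omega)$, which is well-defined because Proposition~\ref{prop:esKw} ensures $(K_j)_\omega$'s regular expanding sequence is an initial segment of $K_\omega$'s. Dualising produces decreasing filtrations by sub-cochain complexes $N_j = \ker(\ol{C}^\ast_\lambda\twoheadrightarrow F_j^\vee)$ and $M_j = \ker(C^\ast(Y)\twoheadrightarrow C^\ast(Y_j))$. The successive quotients $N_{j-1}/N_j \cong \langle r(\sigma_j)^\ast\rangle$ and $M_{j-1}/M_j$ have cohomology of rank one, concentrated in degrees $k_j-1$ and $k_j$ respectively, where the former has trivial induced differential and the latter computes $H^\ast(Y_j,Y_{j-1})\cong\Z$. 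Assuming the filtration compatibility $\phi(N_j)\subset M_j$ (see below), the strategy is a descending induction on $j$: the five lemma applied to the short exact sequences of cochain complexes on both sides, together with the induction hypothesis on $\phi|_{N_{j+1}}\colon N_{j+1}\to M_{j+1}$, reduces the step to verifying that the induced map $N_j/N_{j+1}\to M_j/M_{j+1}$ is a quasi-isomorphism. This last point holds because by part (a) and the primitivity in Proposition~\ref{prop:main}, $\phi$ sends the generator $r(\sigma_j)^\ast$ to $\pm 1$ times the class dual to the cell attached at step $j$, modulo cells in $Y_{j-1}$.

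The main obstacle is the filtration compatibility $\phi(N_j)\subset M_j$. The primitive leading term of $\phi(r(\sigma_t))$ for $t>j$ has support on a unique canonical cube inside $I_{\sigma_t}$, hence lies outside $Y_j$ because $r(\sigma_t)\notin K_j$ by the minimality of the restriction, so this part causes no issue. The delicate contributions come from $c_t^+$: its duals have the form $(\sigma_{j'}\cap\omega_t)^\ast$ for $j'>t$, and in principle such intersections can be contained in earlier facets $\sigma_{t''}$ with $t''\le j$, so the associated cochains may have support reaching into $Y_j$. Overcoming this will require a careful support analysis, showing either that the $c_t^+$-contributions become coboundaries modulo $M_j$ and therefore do not disturb the induced map on cohomology, or that one may choose the simplicial approximation used to build $\rho$ in Lemma~\ref{prop:key2} so that $\phi$ respects both filtrations on the nose.
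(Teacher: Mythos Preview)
Your inductive strategy along the shelling filtration with the five lemma is exactly what the paper does, and your proposal is essentially correct. The obstacle you flag, however, is not real: every simplex $(\sigma_{j'}\cap\omega_t)$ appearing in the regular expanding sequence of $K_{\omega_t}$ contains the restriction $r(\sigma_{j'})$ (this is the content of Proposition~\ref{prop:esKw}), and since $r(\sigma_{j'})\notin K_{j'-1}$ by definition of a shelling, the simplex $\sigma_{j'}\cap\omega_t$ cannot lie in any $K_{t''}$ with $t''<j'$. Hence the cochain $T^\ast\varphi((\sigma_{j'}\cap\omega_t)^\ast)$ is supported on canonical cells $e$ with $\sigma_e=\sigma_{j'}\cap\omega_t\notin K_{j}$, which lie in $M_j$; the filtration compatibility $\phi(N_j)\subset M_j$ follows at once.

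The paper organises the same induction slightly differently and thereby sidesteps this verification altogether. It runs the induction \emph{ascending}, working at each stage with the map $\phi$ built from $\rho_{j+1}$ on $K_{j+1}$ (rather than the global $\rho$). Since $r(\sigma_{j+1})$ is the \emph{last} simplex in the expanding sequence of $K_{j+1}|_{\omega_{j+1}}$, Lemma~\ref{prop:key2}(c) gives $c_{j+1}^+=0$ outright, so $\phi(r(\sigma_{j+1})^\ast)$ equals the primitive cochain \eqref{eq:image} on the nose, with no correction terms to manage. This also lets the paper prove \eqref{eq:d1} inside the induction (via Proposition~\ref{prop:DJSta}, the new $\Z_2$-basis element at step $j+1$ must be the mod~$2$ class of \eqref{eq:image}), whereas your separate direct argument for \eqref{eq:d1} is somewhat loose: identifying $T^\ast(u_{r(\sigma_j)}t_{[m]\setminus\sigma_j})$ with $x_{r(\sigma_j)}$ under Proposition~\ref{prop:DJ} is not immediate and is most cleanly obtained from the filtration argument itself.
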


\begin{proof}
 The first statement in (a) follows from Lemmas \ref{lem:int} and \ref{lem:int2}.
 It remains to prove (b) and equation \eqref{eq:d1}.
Suppose
 $K_1\subset \cdots\subset K_s$ is the filtration
 associated to the shelling $\sigma_1,\ldots,\sigma_s$ of $K$,
 and let $Y_1\subset\cdots\subset Y_s$ be the filtration
 of $Y$ given in Proposition~\ref{prop:dec}.
 The proof uses an induction.
 Suppose $\omega_j\cap \sigma_j=r(\sigma_j)$, and
 denote by $\ol{C}^\ast _{\lambda}(K_j)=\bigoplus_{\omega\in \row \lambda}\ol{C}^\ast (K_j|_{\omega})$
 the cochain complex associated to $K_j$,
where  $K_j|_{\omega}$ is the full subcomplex of $K_j$ induced by $\omega$.

 First consider $j=1$.
 By definition, $r(\sigma_1)=\emptyset$, thus
 $\omega_1=\emptyset$, and $\ol{C}^\ast _{\lambda}(K_1)=\wt{C}^\ast (K_{\omega_1})$
 concentrates in degree $-1$
 and is generated by $\emptyset^\ast $; by \eqref{def:vphi}, $\varphi$ maps $\emptyset^\ast $ to
 the void word $\oslash$, and the class with representative
 $\frac{1}{2^{m-n}}T^\ast (\oslash)$
 generates $H^0(Y_1)$
 (in fact, it is the sum of all $2^n$
 dual vertices of $Y_1$, an $n$-cube). Therefore,
 (b) and \eqref{eq:d1} hold for $K_1$.

 Now suppose they hold for $K_j$. We treat $\ol{C}^\ast _{\lambda}(K_j)$ and
 $\ol{C}^\ast _{\lambda}(K_{j+1})$ as abelian groups and denote
 by $\ol{D}^\ast _{j+1}$ their difference
 $\ol{C}^\ast _{\lambda}(K_{j+1})\setminus \ol{C}^\ast _{\lambda}(K_j)$.
 We see that $\ol{D}^\ast _{j+1}$ is closed under $2\ol{d}'$, and
 denote by $(\ol{D}^\ast _{j+1},2\ol{d}')$ the corresponding
 cochain complex. Observe that the relative cochain complex
 $((\ol{C}^\ast _{\lambda}(K_{j+1}),\ol{D}^\ast _{j+1}),2\ol{d}')$ is canonically
 isomorphic to $(\ol{C}^\ast _{\lambda}(K_j),2\ol{d}')$.
 By definition, \[\ol{D}^\ast _{j+1}=
   \bigoplus_{\omega\in \row \lambda}(\ol{C}^\ast (K_{j+1}|_{\omega})
   \setminus\ol{C}^\ast (K_{i}|_{\omega}))\]
 is generated by dual simplices among
 $\{(\sigma_{j+1}\cap\omega)^\ast \}_{\omega\in \row \lambda}$ so that
 $\sigma_{j+1}\cap\omega$ is critical in $K_{j+1}|_{\omega}$ (see Lemma~\ref{prop:key2}), but by
 Lemma \ref{lem:int},
 $\omega_{j+1}$ is unique, thus
 $\ol{D}^\ast _{j+1}$ is generated by a single element
 $r(\sigma_{j+1})^\ast \in K_{j+1}|_{\omega_{j+1}}$, which is
 clearly a cocycle in dimension
 $k_{j+1}-1$, $k_{j+1}=\card (r(\sigma_{j+1}))$.

In a similar manner, consider the difference $C^\ast (Y_{j+1})\setminus C^\ast (Y_{j})$
 of dual cells, which we shall denote by $D^\ast _{j+1}$; as a subgroup
 of $(C^\ast (Y_{j+1}),d)$, $D^\ast _{j+1}$ is also closed under $d$,
 and the relative cochain complex $((C^\ast (Y_{j+1}),D^\ast _{j+1}),d)$ is canonically
 isomorphic to $(C^\ast (Y_j),d)$.
 Geometrically, up to homotopy,
 $Y_{j+1}$ is obtained from $Y_j$ by attaching
 a cell of dimension $k_{j+1}$ (see Proposition \ref{prop:dec}), and
 $D^\ast _{j+1}$ is a collection of the duals of cubical cells
 \[\{\pi (I_{\sigma})\mid
   r(\sigma_{j+1})\subset\sigma\subset\sigma_{j+1}\}\]
   where $\pi\colon\RZ_{j+1}\to Y_{j+1}$ the orbit map
   (see \eqref{eq:cube}). Thus, the cohomology of $(D^\ast _{j+1},d)$
   is infinite cyclic that concentrates in dimension
   $k_{j+1}$, and we can check directly that
   it is generated by the class with representative
   \begin{equation}
	 \frac{1}{2^{\mu_{k_{j+1}}(\omega_{j+1})}}T^\ast (u_{r(\sigma_{j+1})}
	 t_{\omega_{j+1}\setminus\sigma_{j+1}}),
	 \label{eq:image}
   \end{equation}
	 which is primitive by Proposition \ref{prop:main}
	 (by \eqref{def:d} it is a cocycle
	 as in $K_{j+1}$,
	 $\sigma_{j+1}$ is the unique facet that contains
	 $r(\sigma_{j+1})$).

We see that the cochain map $\phi$
induces a homomorphism between
the long exact sequences associated to
pairs $(\ol{C}^\ast _{\lambda}(K_{j+1}),\ol{D}^\ast _{j+1})$
and $(C^\ast (Y_{j+1}),D^\ast _{j+1})$, respectively:
$$
\xymatrix{
\cdots & \ar[l] H^t(\ol{C}^\ast _{\lambda}(K_{j+1}),\ol{D}^\ast _{j+1}) \ar[d]^{\phi}& \ar[l] H^{t}(\ol{C}^\ast _{\lambda}(K_{j+1})) \ar[d]^{\phi}& \ar[l] H^t(\ol{D}^\ast _{j+1}) \ar[d]^{\phi} & \ar[l] \cdots \\
\cdots & \ar[l] H^{t+1}(C^\ast (Y_{j+1}),D^\ast _{j+1}) & \ar[l] H^{t+1}(C^\ast (Y_{j+1})) & \ar[l] H^{t+1}(D^\ast _{j+1}) & \ar[l] \cdots
}
$$
Where, by the induction hypothesis, for any $t\geq -1$,
the first column is an isomorphism;
in  Lemma~\ref{prop:key2}(c), $c_{j+1}^+$
vanishes from \eqref{eq:>j+1} because
$r(\sigma_{j+1})^\ast =(\sigma_{j+1}\cap\omega_{j+1})^\ast $ is the last
dual simplex in $\wt{C}^\ast (K_{j+1}|_{\omega_{j+1}})$, hence
the image of $r(\sigma_{j+1})^\ast $ under $\phi$ is exactly
\eqref{eq:image}, which means that
the third column is also an isomorphism (which is non-trivial only
when $t=k_{j+1}-1$). Therefore, the middle column is an isomorphism,
from which (b) holds. By Proposition \ref{prop:DJSta},
as a vector space, $H^\ast (Y_{j+1};\Z_2)$
is obtained from $H^\ast (Y_j;\Z_2)$ by adding the basis element
$[x_{r(\sigma_{j+1})}]$, which has to be
the $\mathrm{mod}$ $2$ reduction of
\eqref{eq:image}. We see that \eqref{eq:d1} holds,
and the whole proof is completed by induction.
\end{proof}

By Lemma~\ref{prop:key2}, one can see that $(\ol{C}^\ast _{\lambda},\ol{d}')$ is
		  a direct sum $\oplus_{\omega\in \row \lambda}(\ol{C}^\ast (K_{\omega}),\ol{d}')$,
		  in which $(\ol{C}^\ast (K_{\omega}),\ol{d}')$
		  is cochain-homotopy equivalent to the
		  reduced singular cochain complex
		  of $K_{\omega}$.
We also remark that $(C^\ast (Y),d)$ is cochain-homotopy
		  equivalent to the singular cochain complex of $Y$.

    To understand the cohomology
	of $(\ol{C}^\ast _\lambda,2\ol{d}')$ and
	$(\ol{C}^\ast _{\lambda},\ol{d}')$,
	we use the normal form of a morphism
	between two finitely generated abelian groups:
    for each $i\geq 0$,
	we can find two bases for cocycles and coboundaries
	in $(\ol{C}^i_{\lambda},\ol{d}')$,
	where a coboundary is certain integral times of
	exactly one cocycle (cf.~\cite[Theorem 11.3, pp.~55--56]{Munkres1984book}).
	The same bases still work for $(\ol{C}^i,2\ol{d}')$,
	whereas those coefficients are doubled.
    Therefore, we have Theorem~\ref{thm:integral_cohom_of_Y} as a corollary of Theorem~\ref{thm:main}.

%\begin{corollary}\label{cor:tor}
%	  Let $\ol{C}^i$ be the group
%	  $\bigoplus_{\omega\in \row \lambda}\wt{H}^i(K_{\omega};\Z )$,
%	  $i\geq -1$,
%	  and let $p$ be an odd prime, where $\wt{H}^{-1}(K_{\omega};\Z )$
%		is non-trivial only when $\omega=\emptyset$, which is
%		infinite cyclic.
%	  Then: 	
%	  \begin{enumerate}
%		\item the $\Z $-summands (respectively $\Z_{p^k}$-summands, $k\geq 1$)
%		  of $\ol{C}^i$ are in one-to-one correspondence
%		  with that of $H^{i+1}(Y;\Z )$; and
%	\item for $k\geq 1$,
%	  the $\Z_{2^k}$-summands of $\ol{C}^{i}$ are in one-to-one correspondence
%	  with the $\Z_{2^{k+1}}$-summands of $H^{i+1}(Y;\Z )$.
%	  \end{enumerate}
%	  	\end{corollary}

\section{The Bockstein spectral sequence and proof of Theorem~\ref{cor:bss}}\label{sec:bss}
Let $X$ be a topological space such that $H_i(X;\Z)$ is finitely generated for all $i$.
To recover the $2^k$-torsion elements from $H^\ast(X;\Z_2)$, it suffices to
	  understand, which is a classical method, their
	  behaviors under (higher) $\mathrm{mod}$ $2$ Bockstein homomorphisms.
	  More precisely, we have an exact couple
	  \begin{equation}\label{def:bss}
\xymatrix{
    H^\ast(X;\Z) \ar[rr]^{\cdot 2} & & H^\ast(X;\Z) \ar[dl]^{\mod 2}\\
    & H^\ast(X;\Z_2) \ar[ul]^{\kappa} &
}
	  \end{equation}
  where $\kappa$ is the connecting homomorphism, which is
  induced from the exact sequence
$$
\xymatrix{
    0  \ar[rr] && \Z \ar[rr]^{\cdot 2}& &\Z \ar[rr]^{\mod 2}& &\Z_2 \ar[rr]&& 0.
}
$$
	The following fact is well-known (cf.~\cite[Chapter 10]{McCleary2001book}).

\begin{proposition}\label{prop:bss}
    The single-graded spectral sequence $E^\ast_\ast(X)$ associated to \eqref{def:bss}
	satisfies the following properties.
	\begin{enumerate}
	  \item The sequence $E_1^\ast =H^\ast (X;\Z_2)$, and the first differential
	\[d_1\colon H^n(X;\Z_2)\to H^{n+1}(X;\Z_2)\]
	coincides with the Steenrod square $Sq^1$. More explicitly,
	if the $\mathrm{mod}$ $2$ reduction $[c]$ of an integral singular cochain
	$c$ represents a class in $H^n(X;\Z_2)$, then
	\begin{equation}	  \label{eq:d10}
	  d_1[c]=[\frac{1}{2}dc],
	\end{equation}
	where $d$ is the coboundary operator.
  \item The $\Z_{2^{k}}$-summand in $H^{n+1}(X;\Z )$
	induces a $\Z_2$-pair
	in $H^n(X;\Z_2)$ and $H^{n+1}(X;\Z_2)$, respectively,
	which survives to
	$E_k^\ast (X)$, and is
	connected by the $k$th differential $d_k$.
   \item The sequence $E_{\infty}^\ast (X)$
	is the $\mathrm{mod}$ $2$ reduction of the free part of
	$H^\ast (Y;\Z )$.
\end{enumerate}
	\end{proposition}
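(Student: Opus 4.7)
The plan is to derive each of the three properties directly from the standard machinery of derived exact couples applied to the Bockstein couple \eqref{def:bss}. Since the statement is classical (cf.\ \cite[Chapter 10]{McCleary2001book}), the role of the proposal is mainly to record the relevant identifications at each page of the spectral sequence.

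For property (1), I would first observe that the first derived couple of \eqref{def:bss} has $E_1^\ast = H^\ast(X;\Z_2)$ tautologically (as the middle term of the couple), with first differential $d_1 = (\mathrm{mod}\,2)\circ \kappa$. Unwinding the connecting homomorphism at the cochain level, for any mod 2 cocycle $c$ one chooses an integral lift $\tilde c$; then $d\tilde c\equiv 0\pmod 2$, so $\frac{1}{2}d\tilde c$ is a well-defined integral cochain, and $\kappa[c]=[\frac{1}{2}d\tilde c]$. Reducing modulo 2 yields formula \eqref{eq:d10}. The identification $d_1=Sq^1$ is then the classical Steenrod fact that the mod 2 Bockstein coincides with the first Steenrod square, which follows because both are stable natural cohomology operations of degree one agreeing on the generator of $H^1(\RP^\infty;\Z_2)$.

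For property (2), I would use finite generation and the structure theorem to decompose $H^\ast(X;\Z)$ into a free part and cyclic $p$-torsion summands. Multiplication by $2$ is an isomorphism on odd-torsion summands, so they contribute nothing to any $E_k^\ast$ with $k\geq 1$. For a summand $\Z_{2^k}$ generated by $z\in H^{n+1}(X;\Z)$, exactness of the long exact sequence attached to $0\to\Z\xrightarrow{\cdot 2}\Z\to\Z_2\to 0$ supplies $\bar w\in H^n(X;\Z_2)$ with $\kappa(\bar w)=2^{k-1}z$, while $\bar z:=z\bmod 2\in H^{n+1}(X;\Z_2)$ is nonzero. Since $\kappa(\bar w)$ is divisible by $2^{k-1}$ but not $2^k$, the derived-couple bookkeeping shows the pair $(\bar w,\bar z)$ survives to page $E_k^\ast$, and there $d_k(\bar w)=(\kappa(\bar w)/2^{k-1})\bmod 2=\bar z$, killing both classes on $E_{k+1}^\ast$. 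Property (3) then follows at once: after all $2$-torsion has been eliminated in this way, $E_\infty^\ast$ consists precisely of mod 2 reductions of integral classes divisible by arbitrarily high powers of $2$, which by finite generation are exactly the elements of the free summand $\Z^r\subset H^\ast(X;\Z)$.

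The main technical point is property (2): one must keep careful track of the page at which each $\Z_2$-pair is connected, which requires a filtration argument on the iterated derived couples tracking the exact $2$-adic divisibility of each torsion generator. However, this bookkeeping is routine for the Bockstein spectral sequence and is treated in full detail in \cite[Chapter 10]{McCleary2001book}, so no genuinely new argument is required for Proposition~\ref{prop:bss} itself.
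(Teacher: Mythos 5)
The paper does not actually prove Proposition~\ref{prop:bss}: it simply states it as ``well-known'' and cites \cite[Chapter 10]{McCleary2001book}. Your sketch unpacks that citation, and for properties (1) and (2) it is correct and standard: $d_1 = (\bmod\,2)\circ\kappa$ is the exact-couple differential, your cochain-level computation gives \eqref{eq:d10}, the identification with $Sq^1$ is the classical Bockstein fact, and the derived-couple bookkeeping for a $\Z_{2^k}$-summand, via $\kappa(\bar w)=2^{k-1}z$ and $d_k[\bar w]=[j(z)]$, is exactly right. However, your closing description of $E_\infty$ in property (3) is incorrect as stated: in a finitely generated abelian group the elements divisible by arbitrarily high powers of $2$ form the odd-torsion summand (plus zero), not the free summand, so ``mod 2 reductions of integral classes divisible by arbitrarily high powers of $2$'' does not describe $E_\infty$. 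The correct characterization is that the permanent cycles are $Z_\infty = \ker\kappa = \operatorname{Im}(j)$, i.e., all mod $2$ reductions of integral classes, while the permanent boundaries are $j$ applied to the $2$-torsion subgroup; the quotient is then $H^\ast(X;\Z)/(2H^\ast(X;\Z)+T)\cong(\text{free part})\otimes\Z_2$, where $T$ is the torsion subgroup. Since you defer to McCleary for the full details this does not invalidate the argument, but the stated reason for (3) should be replaced by this correct identification of $Z_\infty$ and $B_\infty$.
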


	Now we apply the Bockstein spectral sequence to a real toric space $Y$.
    First, by Proposition~\ref{prop:BP}, we see that the $\mathrm{mod}$ $2$ cohomology
	groups of $BK$ are generated by monomials of the form
	\[x_{i_1}^{n_1}x_{i_2}^{n_2}\ldots x_{i_\ell}^{n_\ell}, \quad
	  \{i_1,\ldots, i_\ell\}\in K,\]
    on which we have, by the Cartan formula,
	\begin{equation}	  \label{eq:sq}
	  Sq^1(x_{i_1}^{n_1}x_{i_2}^{n_2}\ldots x_{i_\ell}^{n_\ell})=
	  \sum_{j=1}^\ell n_jx_{i_1}^{n_1}\cdots x_{i_{j-1}}^{n_{j-1}}
	  x^{n_j+1}_{i_j}x_{i_{j+1}}^{n_{j+1}}\cdots x_{i_\ell}^{n_\ell}.
	\end{equation}
	  On passage to $Y$ via the inclusion $Y\to BK$, by \eqref{eq:DJ}, an element
	  from $\Z_2[K]$ being a cocycle under $Sq^1$
	  means that its image lies in the
	  ideal $(l_{\lambda_1},\ldots,l_{\lambda_n})$.

	As $\phi$ in Theorem~\ref{thm:main} is a cochain map with respect to $2\ol{d}'$,
	by \eqref{eq:d10} and \eqref{eq:d1}, one can see that
	\begin{equation}
	  \phi(\ol{d}'(r(\sigma_{j}))=\frac{1}{2}\phi(2\ol{d}'(r(\sigma_{j}))
	=Sq^1(x_{r(\sigma_j)}).
	  \label{eq:cal2}
	\end{equation}
    Therefore, together with Proposition \ref{prop:bss} and the second part of Theorem~\ref{thm:integral_cohom_of_Y},
$E^i_{k+1}(Y)$ is isomorphic
		  to $\bigoplus_{\omega\in \row \lambda}E^{i-1}_k(K_{\omega})$,
		  for all $i,k\geq 1$,
which proves Theorem~\ref{cor:bss}.

For the following examples, a simplex $\{i_1,i_2,\ldots,i_\ell\}$ will be denoted by $i_1i_2\ldots i_\ell$ for short.
\begin{example}\label{exm:Kb}
    Let $K$ be the boundary complex of a square having a shelling
    $$
        12,\ 2\ul{3},\ 3\ul{4},\ \ul{1}\ul{4}
    $$
    with their restrictions marked.
    Let $\lambda \colon \Z_2^4 \to \Z_2^2$ be a characteristic function represented by
	$$\Lambda=
		  \begin{pmatrix}
			1 & 0 & 1 & 1\\
			0 & 1 & 0 & 1
		  \end{pmatrix}
    $$
		\[\Lambda=\bordermatrix{%
    &1&	2&	3 & 4 \cr
	&1& 0 & 1 & 1 \cr
    &0& 1 & 0 & 1 \cr
		  },\]
    satisfying the non-singularity condition over $K$, where the numbers above the matrix are indicators for vertices of $K$.
    One can easily see that the real moment-angle complex $\RZ_K$ is the torus $S^1 \times S^1$, and the real toric space $Y=M^\R(K,\lambda)$ associated to $K$ and $\lambda$ is the Klein bottle.
    The cohomology of the Klein bottle $Y$ is known as
    $$
        H^i(Y;\Z) = \left\{
        \begin{array}{ll}
            \Z, & \hbox{for $i=0,1$;} \\
            \Z_2, & \hbox{for $i=2$;} \\
            0& \hbox{otherwise.}
        \end{array}
        \right.
    $$
    Let us compute its Bockstein spectral sequence in two ways.
    First, by Proposition~\ref{prop:DJ}, we have
    $$
        H^\ast (Y;\Z_2)\cong\Z_2[K]/(l_{\lambda_1},l_{\lambda_2})= \Z_2[x_1,\ldots,x_4]/(x_1 x_3,x_2x_4, l_{\lambda_1},l_{\lambda_2}),
    $$
		  where $l_{\lambda_1}=x_1+x_3+x_4$ and $l_{\lambda_2}=x_2+x_4$.
		  By Proposition \ref{prop:basis}, $H^\ast (Y;\Z_2)$ has
		  an additive basis $\{1,x_{3},x_{4},x_1x_4\}$. We see that
		  in $\Z_2[K]$,
		  which is a free $\Z_2[l_{\lambda_1},l_{\lambda_2}]$-module,
		  \begin{align*}
			& Sq^1(x_3)=x_3^2=l_{\lambda_1}x_3+(l_{\lambda_1}+l_{\lambda_2})x_{4}+x_1x_4,
			\numberthis\label{eq:cal1}
			\\
			& Sq^1(x_4)=x_4^2=l_{\lambda_2}x_4, \quad \text{ and }\\
			& Sq^1(x_1x_4)=x_{1}^2x_4+x_1x_4^2=l_{\lambda_1}x_1x_4
		  \end{align*}
		  since $x_1x_3$ and $x_2x_4$ vanish in $\Z_2[K]$.
		  Thus, $x_1x_4$ and $x_3$ are connected by $Sq^1$, whereas
		  $x_4$ and $1$ survive to $E_{\infty}^\ast (Y)$.

    Second, as
$\row \lambda=\{\emptyset, 134, 24, 123\}$ as a set,
under the rule $\omega\cap\sigma=r(\sigma)$ for a facet $\sigma$,
we have $r(12)=\emptyset\in K_{\emptyset}$, $r(34)=4\in K_{24}$ and
$r(23)=3$ and $r(14)=14$ are in $K_{134}$ (with no restrictions in $K_{123}$).
\begin{figure}
   \begin{center}
\begin{tikzpicture}[thick,scale=0.5]
    \fill (0,2) circle (4pt) node[above left] {$1$};
%    \fill (2,2) circle (4pt) node[above right] {$4$};
%    \fill (2,0) circle (4pt) node[below right] {$3$};
    \node at (4,1) {$\longrightarrow$};
    \newcommand\x{6}
    \fill (\x+0,2) circle (4pt) node[above left] {$1$};
%    \fill (\x+2,2) circle (4pt) node[above right] {$4$};
    \fill (\x+2,0) circle (4pt) node[below right] {$3$};
    \draw (\x+2,0) circle (8pt);
    \node at (\x+4,1) {$\longrightarrow$};
    \renewcommand\x{12}
    \fill (\x+0,2) circle (4pt) node[above left] {$1$};
    \fill (\x+2,2) circle (4pt) node[above right] {$4$};
    \fill (\x+2,0) circle (4pt) node[below right] {$3$};
    \draw (\x+2,0) circle (8pt);
    \draw (\x+2,2)-- (\x+2,0);
    \node at (\x+4,1) {$\longrightarrow$};
    \renewcommand\x{18}
    \fill (\x+0,2) circle (4pt) node[above left] {$1$};
    \fill (\x+2,2) circle (4pt) node[above right] {$4$};
    \fill (\x+2,0) circle (4pt) node[below right] {$3$};
    \draw (\x+2,0) circle (8pt);
    \draw (\x+2,2)-- (\x+2,0);
    \draw (\x+1,2) ellipse (1.5 and 0.6);
\end{tikzpicture}
    \end{center}
    \caption{The regular expanding of $K_{134}$ in Example~\ref{exm:Kb}}
    \label{fig:K134}
\end{figure}
\begin{figure}
   \begin{center}
\begin{tikzpicture}[thick,scale=0.5]
    \newcommand\x{0}
    \fill (\x+0,2) circle (4pt) node[above left] {$1$};
    \fill (\x+2,2) circle (4pt) node[above right] {$4$};
    \fill (\x+2,0) circle (4pt) node[below right] {$3$};
    \draw (\x+2,0) circle (8pt);
    \draw (\x+2,2)-- (\x+2,0);
    \draw (\x+1,2) ellipse (1.5 and 0.6);
    \node at (\x+4,1) {$\longrightarrow$};
    \renewcommand\x{6}
    \fill (\x+0,2) circle (4pt) node[above left] {$1$};
    \fill (\x+2,2) circle (4pt) node[above right] {$4$};
    \fill (\x+2,1) circle (4pt) node[below right] {$3$};
    \draw (\x+2,1) circle (8pt);
    \draw (\x+2,2)-- (\x+2,1);
    \draw (\x+1,2) ellipse (1.5 and 0.6);
    \node at (\x+4,1) {$\longrightarrow$};
    \renewcommand\x{12}
    \fill (\x+0,2) circle (4pt) node[above left] {$1$};
    \fill (\x+2,2) circle (4pt) node[above right] {$3(4)$};
    %\fill (\x+2,0) circle (4pt) node[below right] {$3$};
    \draw (\x+2,2) circle (8pt);
    %\draw (\x+2,2)-- (\x+2,0);
    \draw (\x+1,2) ellipse (1.5 and 0.6);
\end{tikzpicture}
    \end{center}
    \caption{The retraction of $K_{134}$ in Example~\ref{exm:Kb}}
    \label{fig:K134_retraction}
\end{figure}

The shelling above gives a regular expanding sequence of $K_{134}$,
i.e., $1$, $3$, $34$, $14$ by Proposition~\ref{prop:esKw}, in which $3$ and $14$
are critical (see Figure~\ref{fig:K134}).
The simplicial retraction maps
$34$ to
$3$ as in Figure~\ref{fig:K134_retraction}, therefore
  $\rho([34])=0$ and $\rho([4])=[3]$ (see Lemmas~\ref{lem:rho1} and~\ref{prop:key2}).
After dualization, we have
$$
    \ol{d}'3^\ast =d'4^\ast =(14)^\ast,
$$
which coincides with the previous calculation \eqref{eq:cal1} by
\eqref{eq:cal2}.
\end{example}

\begin{example}\label{exm:K3}
    Let $K$ be the boundary of the simplicial $3$-polytope shown in Figure~\ref{fig:K3} and let $\lambda \colon \Z_2^7 \to \Z_2^3$ be a characteristic function represented by
		\[\Lambda=\bordermatrix{%
    &1&	2&	3 & 4 & 5 & 6 &7\cr
	&1& 1 & 0 & 0 & 0 & 0 &1\cr
    &0& 0 & 1 & 1 & 1 & 0 &0\cr
	&0& 0 & 0 & 0 & 1 & 1 &1\cr
		  },\]
    where the numbers above the matrix are indicators for vertices of $K$.
\begin{figure}
   \begin{center}
\begin{tikzpicture}[thick]
    \coordinate (v1) at (3,3.3) {};
    \coordinate (v2) at (4,0) {};
    \coordinate (v3) at (1.7, 0.5) {};
    \coordinate (v4) at (5,2.5) {};
    \coordinate (v5) at (5,1) {};
    \coordinate (v6) at (0.9,2) {};
    \coordinate (v7) at (3.5,1.8) {};

    \draw (v1) node[above] {$1$};
    \draw (v2) node[below right] {$2$};
    \draw (v3) node[below left] {$3$};
    \draw (v4) node[above right] {$4$};
    \draw (v5) node[right] {$5$};
    \draw (v6) node[left] {$6$};
    \draw (v7) node[left] {$7$};

    \draw (v1)--(v6)--(v3)--(v2)--(v5)--(v4)--(v1);
    \draw (v1)--(v3)--(v5)--(v1)--(v7)--(v3)--(v7)--(v5);
    \draw[style=dashed] (v4)--(v6)--(v2)--(v4);
\end{tikzpicture}
   \end{center}
   \caption{}
   \label{fig:K3}
\end{figure}
	We give $K$ a shelling with restrictions
	marked as
	\begin{align*}
	  137, \ 13\underline{6},\
	  3\underline{5}7,\ \underline{1}\underline{5}7,\
	  \underline{2}35, \
	  \underline{2}3\underline{6}, \
	  1\underline{4}5,\ \underline{2}\underline{4}5, \
	  1\underline{4}\underline{6}, \text{ and }
	  \underline{2}\underline{4}\underline{6}.
	\end{align*}
	With the $10$ basis elements $\{1, x_2, x_4, x_5, x_6, x_1 x_5, x_2 x_6, x_2 x_4, x_4 x_6, x_2 x_4 x_6\}$ of $H^\ast(Y;\Z_2)$ associated
	to the restrictions above, one can calculate their image under $Sq^1$
	in $\Z_2[K]$;
    we have
    $x_2^2=l_{\lambda_1}x_2$,
	$x_4^2=(l_{\lambda_2}+l_{\lambda_3})x_4+x_4 x_6$,
    $x_5^2=(l_{\lambda_1}+l_{\lambda_3})x_5+l_{\lambda_3}x_2+x_1 x_5+x_2 x_6$,
    $x_6^2=l_{\lambda_3}x_{6}$ together with
    $Sq^1(x_1 x_5)=(l_{\lambda_1}+l_{\lambda_3})x_1 x_5$,
	$Sq^1(x_2 x_6)=(l_{\lambda_1}+l_{\lambda_3})x_2 x_6$,
    $Sq^1(x_2 x_4)=(l_{\lambda_1}+l_{\lambda_2}+l_{\lambda_3})x_2 x_4+x_2 x_4 x_6$,
    $Sq^1(x_4 x_6)=(l_{\lambda_2}+l_{\lambda_3})x_4 x_6$,
    and
    $Sq^1(x_2 x_4 x_6) = (l_{\lambda_1}+l_{\lambda_2}+l_{\lambda_3})x_2 x_4 x_6$
    where
	  $l_{\lambda_1}=x_1+x_2+x_7$, $l_{\lambda_2}=x_3+x_4+x_5$ and
	  $l_{\lambda_3}=x_5+x_6+x_7$.

	  It is interesting to compare the (simplicial) coboundary $\ol{d}'$
	  between critical simplices in each full subcomplexes $K_{\omega}$ for $\omega\in \row \lambda$,
	  and the Bockstein homomorphism $Sq^1$
	  by \eqref{eq:cal2}.
We illustrate this in Figure \ref{fig:E6},
	  where $\emptyset\in K_{\emptyset}$ is ignored; notice that
	  no restrictions are contained in full subcomplexes $K_{345}$
	  and $K_{123457}$.
\begin{figure}
   \begin{center}
\begin{tikzpicture}[thick,scale=0.5]
    \newcommand\di{4}
    \newcommand\x{-0.5}
    \newcommand\y{9}
    \draw (\x-1,\y-\di-0.7) rectangle (\x+5*\di+2,\y+0.7);
    \node (1) at (\x+0,\y+0) {$1$};
    \node (x2) at (\x+1*\di,\y+0) {$x_2$};
    \node (x6) at (\x+0.5*\di,\y-\di) {$x_6$};
    \node (x26) at (\x+2*\di,\y-\di) {$x_2 x_6$};
    \node (x5) at (\x+2.5*\di,\y+0) {$x_5$};
    \node (x15) at (\x+3*\di,\y-\di) {$x_1 x_5$};
    \node (x24) at (\x+4*\di,\y+0) {$x_2 x_4$};
    \node (x246) at (\x+4*\di,\y-\di) {$x_2 x_4 x_6$};
    \node (x4) at (\x+5*\di,\y+0) {$x_4$};
    \node (x46) at (\x+5*\di,\y-\di) {$x_4 x_6$};
    \path (x5) edge node[left]{$Sq^1$} (x26);
    \path (x5) edge node[right]{$Sq^1$} (x15);
    \path (x24) edge node[right]{$Sq^1$} (x246);
    \path (x4) edge node[right]{$Sq^1$} (x46);

%K_127
    \renewcommand\x{0}
    \renewcommand\y{0}
    \fill (\x+0,\y+0) circle (4pt) node[left] {$2$};
    \draw (\x+0,\y+0) circle (8pt);
    \fill (\x+2,\y+0) circle (4pt) node[right] {$7$};
    \fill (\x+1,\y+1.5) circle (4pt) node[above] {$1$};
    \draw (\x+2,\y+0) --(\x+1,\y+1.5);

%K_127
    \renewcommand\x{6}
    \renewcommand\y{0}
    \fill (\x+0,\y+0) circle (4pt) node[left] {$6$};
    \draw (\x+0,\y+0) circle (8pt);
    \fill (\x+2,\y+0) circle (4pt) node[right] {$7$};
    \fill (\x+1,\y+1.5) circle (4pt) node[above] {$5$};
    \draw (\x+2,\y+0) --(\x+1,\y+1.5);

%K_345
    \renewcommand\x{12}
    \renewcommand\y{0}
    \fill (\x+0,\y+0) circle (4pt) node[left] {$3$};
    \fill (\x+2,\y+0) circle (4pt) node[right] {$4$};
    \fill (\x+1,\y+1.5) circle (4pt) node[above] {$5$};
    \draw (\x+0,\y+0)--(\x+1,\y+1.5)--(\x+2,\y+0);

%K_1256
    \renewcommand\x{18}
    \renewcommand\y{0}
    \fill (\x+0,\y+0) circle (4pt) node[below left] {$2$};
    \fill (\x+2,\y+0) circle (4pt) node[below right] {$5$};
    \draw (\x+2,\y+0) circle (8pt);
    \fill (\x+2,\y+2) circle (4pt) node[above right] {$1$};
    \fill (\x+0,\y+2) circle (4pt) node[above left] {$6$};
    \draw (\x+0,\y+0)--(\x+2,\y+0)--(\x+2,\y+2) -- (\x+0,\y+2) -- cycle;
    \draw (\x+2,\y+1) ellipse (0.6 and 1.4);
    \draw (\x+0,\y+1) ellipse (0.6 and 1.4);

%K_3467
    \renewcommand\x{9}
    \renewcommand\y{-5.5}
    \fill (\x+0,\y+0) circle (4pt) node[below left] {$3$};
    \fill (\x+2,\y+0) circle (4pt) node[below right] {$7$};
    \fill (\x+2,\y+2) circle (4pt) node[above right] {$4$};
    \draw (\x+2,\y+2) circle (8pt);
    \fill (\x+0,\y+2) circle (4pt) node[above left] {$6$};
    \draw (\x+2,\y+2) -- (\x+0,\y+2) -- (\x+0,\y+0)--(\x+2,\y+0);
    \draw (\x+1,\y+2) ellipse (1.4 and 0.6);

%K_12346
    \renewcommand\x{1}
    \renewcommand\y{-6}
    \draw [fill=black!20!white] (\x+0,\y+0) rectangle (\x+3, \y+3);
    \fill (\x+0,\y+0) circle (4pt) node[below left] {$2$};
    \fill (\x+3,\y+0) circle (4pt) node[below right] {$3$};
    \fill (\x+3,\y+3) circle (4pt) node[above right] {$1$};
    \fill (\x+0,\y+3) circle (4pt) node[above left] {$4$};
    \fill (\x+1.5,\y+1.5) circle (4pt) node[below] {$6$};
    \draw (\x+3,\y+3) -- (\x+0,\y+3) -- (\x+0,\y+0)--(\x+3,\y+0)--(\x+3,\y+3)--(\x+1.5,\y+1.5)--(\x+0,\y+3)--(\x+1.5,\y+1.5) -- (\x+0,\y+0)--(\x+1.5,\y+1.5)--(\x+3,\y+0);
    \draw (\x+0,\y+1.5) ellipse (0.4 and 1.8);
    \draw (\x+0.5,\y+1.5) ellipse (1.4 and 2.1);

%K_123457
    \renewcommand\x{15.5}
    \renewcommand\y{-6}
    \draw [fill=black!20!white] (\x+0,\y+0) rectangle (\x+4, \y+3);
    \fill (\x+0,\y+0) circle (4pt) node[below left] {$3$};
    \fill (\x+4,\y+0) circle (4pt) node[below right] {$2$};
    \fill (\x+4,\y+3) circle (4pt) node[above right] {$4$};
    \fill (\x+0,\y+3) circle (4pt) node[above left] {$1$};
    \fill (\x+1.3,\y+1.5) circle (4pt) node[left] {$7$};
    \fill (\x+2.7,\y+1.5) circle (4pt) node[right] {$5$};
    \draw (\x+0,\y+0) -- (\x+1.3,\y+1.5) -- (\x+0,\y+3) -- (\x+2.7,\y+1.5) -- (\x+0, \y+0)
          (\x+1.3,\y+1.5) -- (\x+2.7,\y+1.5)-- (\x+4,\y+3)
          (\x+2.7,\y+1.5)-- (\x+4,\y+0);
\end{tikzpicture}
   \end{center}
   \caption{The relation between simplicial and cellular cochains}
    \label{fig:E6}
\end{figure}

	  Since $\wt{H}^\ast (K_{\omega})$
	  is torsion-free for all
	  $\omega\in \row \lambda$,
by Theorems~\ref{thm:main} and ~\ref{cor:bss}, we can choose $1$, $x_2$, $x_6$ and $x_2 x_6$ as
	  a basis for $E_2^\ast(Y)=E_{\infty}^\ast(Y)$, and the other four basis elements in $H^\ast (Y;\Z_2)$
	  are connected in pairs by $Sq^1$.
    As a conclusion,
	  \[ H^k(Y;\Z)\cong\begin{cases}
		  \Z    & k=0\\
		  \Z\oplus\Z & k=1\\
		  \Z\oplus\Z_2\oplus\Z_2 & k=2\\
          \Z_2 & k=3.
		\end{cases}\]
	\end{example}

  \begin{remark}
  When $K$ is shellable in the sense of
  Bj\"{o}rner and Wachs \cite{Bjorner-Waches1996} (i.e., $K$ need not to be pure),
  the algebraic basis from shellability (cf. Proposition~\ref{prop:basis}) still holds
  (see \cite[Theorem 12.3]{Bjorner-Waches1997}),
  and our proof of Theorem~\ref{thm:main}
  works as well. Shellable complexes (in the usual sense) are considered
  in this paper since we are interested in the case when the small
  cover is a piecewise linear manifold (see Proposition~\ref{prop:dec}).
\end{remark}

\section*{Acknowledgments}
  The authors would like to thank Professor Anthony Bahri for the invitation of the visit to Rider University, and for many inspiring conversations.
  The second named author also thanks Professor Alexandru~I. Suciu at Northeastern University for providing a wonderful sabbatical location conducive to thinking deep thoughts.

\bigskip

\bibliographystyle{amsplain}
%\bibliography{reference2019}
\providecommand{\bysame}{\leavevmode\hbox to3em{\hrulefill}\thinspace}
\providecommand{\MR}{\relax\ifhmode\unskip\space\fi MR }
% \MRhref is called by the amsart/book/proc definition of \MR.
\providecommand{\MRhref}[2]{%
  \href{http://www.ams.org/mathscinet-getitem?mr=#1}{#2}
}
\providecommand{\href}[2]{#2}

\end{document}